\documentclass[12pt]{amsart}

\usepackage{amsmath, amsthm}
\usepackage{graphicx,epsfig}
\usepackage{amsmath,latexsym,amssymb,verbatim}

\newtheorem{definition}{Definition}

\newtheorem{proposition}{Proposition}
\newtheorem{theorem}{Theorem}

\newtheorem*{theoreme}{Theorem 4}
\newtheorem{remark}{Remark}
\newtheorem{example}{Example}

\def\sideremark#1{\ifvmode\leavevmode\fi\vadjust{\vbox to0pt{\vss % the remark
      \hbox to 0pt{\hskip\hsize\hskip1em           %                will appear only
 \vbox{\hsize2cm\tiny\raggedright\pretolerance10000%                on the side
 \noindent #1\hfill}\hss}\vbox to8pt{\vfil}\vss}}}%
                                                   %                in 3cm

                                                   %                wide box
                                                   %

\renewcommand\thefootnote{\arabic{footnote}}
\begin{document}

\title[$\varepsilon$-neighborhoods of orbits, cohomological equations]{$\varepsilon$-neighborhoods of orbits of\\ parabolic diffeomorphisms and\\ cohomological equations}
\address{University of Zagreb, Faculty of Electrical Engineering and Computing, Department of Applied Mathematics, Unska 3, 10000 Zagreb, Croatia}
\email{maja.resman@fer.hr}

\address{Universit\' e de Bourgogne, Institut de Math\' ematiques de Bourgogne, UMR 5584 du CNRS, 9 avenue Alain Savary, 21078 Dijon Cedex, France} 

\author{Maja Resman}
\let\thefootnote\relax\footnotetext{The research was done at the Institute of Mathematics of the University of Burgundy, Dijon, and funded by the French government scholarship for the academic year 2012/13 and the 2012 Elsevier/AFFDU grant.}

\begin{abstract}
In this article, we study analyticity properties of (directed) areas of $\varepsilon$-neighborhoods of orbits of parabolic germs. The article is motivated by the question of analytic classification using $\varepsilon$-neighborhoods of orbits in the simplest formal class.

We show that the coefficient in front of $\varepsilon^2$ term  in the asymptotic expansion in $\varepsilon$, which we call the principal part of the area, is a sectorially analytic function in the initial point of the orbit. It satisfies a cohomological equation similar to the standard trivialization equation for parabolic diffeomorphisms. %We give necessary and sufficient conditions on diffeomorphism $f$, under which its principal part of the area of $\varepsilon$-neighborhoods of orbits is a globally analytic function. 

We give necessary and sufficient conditions on a diffeomorphism $f$ for the existence of globally analytic solution of this equation. Furthermore, we introduce new classification type for diffeomorphisms implied by this new equation and investigate the relative position of its classes with respect to the analytic classes. 
\end{abstract}

\maketitle
Keywords: parabolic diffeomorphisms, moduli of classification, Abel difference equation, Stokes phenomenon, epsilon-neighborhoods

MSC 2010: 37C05, 37C10, 37C15, 34M40, 39A45, 40G10

\bigskip 

\section{Introduction and main results}\label{one}

\subsection{Motivation}
Each germ of a parabolic diffeomorphism in the complex plane, 
\begin{equation}\label{diffe}
f(z)=z+a_1z^{k+1}+a_2z^{k+2}+o(z^{k+2}),\ k\in\mathbb{N},\ a_i\in\mathbb{C},\ a_1\neq 0,
\end{equation} can, by formal changes of variables, be reduced to the formal normal form, which is the time-one map of the holomorphic vector field
\begin{equation*}
f_0(z)=\exp(X_{k,\rho}),\ X_{k,\rho}=\frac{z^{k+1}}{1+\frac{\rho}{2\pi i}z^k}\frac{d}{dz},
\end{equation*}
for an appropriate choice of $k\in\mathbb{N}$ and $\rho\in\mathbb{C}$. The formal class of a parabolic diffeomorphism is given by the pair $(k,\rho),\ k\in\mathbb{N},\ \rho\in\mathbb{C}$. Here, $k$ is the same as in \eqref{diffe} and $\rho$ can, in the course of formal changes of variables reducing $f$ to $f_0$, be expressed using $k$ and the first $k+1$ coefficients $a_1,\ldots,a_{k+1}$. We have shown in \cite{resman} that the formal class of a diffeomorphism can be recognized \emph{looking} only at the (directed) area of the $\varepsilon$-neighborhood of one of its orbits. Accordingly, only finitely many terms in its asymptotic expansion in $\varepsilon$ determine the formal class of the diffeomorphism. 

On the other hand, the analytic class of a parabolic diffeomorphism is given by $2k$ diffeomorphisms, the so-called \emph{\' Ecalle-Voronin moduli} or \emph{horn maps}, see e.g. \cite{ecalle}, \cite{voronin} or \cite{loray}. This article was motivated by the following question: 

\smallskip
\emph{Can we read the analytic class of a diffeomorphism from the $\varepsilon$-neighborhoods of its orbits, regarded as functions of parameter $\varepsilon>0$ and of the initial point $z\in\mathbb{C}$?}
\smallskip 

It is clear that the analytic class, unlike the formal class, cannot be read from any finite jet of parabolic germ, see e.g.\cite[21]{ilya}. Therefore, we are forced to analyse the whole functions of $\varepsilon$-neighborhoods of orbits, not just finitely many terms in the expansion. The article does not answer the above question, but it gives different partial results summarized in Subsection~\ref{onetwo}, concerning the analyticity properties of the (directed) area of the $\varepsilon$-neighborhoods of orbits. We reach the conclusion that the principal part in the expansion, defined in Subsection~\ref{onetwo} below, is the only sectorially analytic object in the expansion. Moreover, it satisfies a cohomological equation similar to the trivialisation (Abel) equation, standardly used in context of analytic classification of germs. Except standard Abel equation, other cohomological equations and their relation to conjugacy problems have been considered for real-line diffeomorphisms in works of Belitskii and Tkachenko, Lyubich, Grintchy and Voronin, \cite{beli}, \cite{lyu}, \cite{vorgri}. In this article, the cohomological equation appears in a different manner, from geometric properties of $\varepsilon$-neighborhoods of orbits. Difference equations and the question of sectorial summability of their formal solutions (the so-called Stokes phenomenon) appear frequently in problems in nature, see e.g. \cite{jpr} for some insight. 

We study solutions of so-called $m$-Abel equations of the form $H\circ f-H=g$, $g(z)=z^m$, and their relation to analytic classificaton problem in Section~\ref{seven}. We show that the sectorial solutions of $1$-Abel equation for a germ are not sufficient to read its analytic class. We introduce new classification of diffeomorphisms with respect to $1$-Abel equation and show \emph{transversality} of these classes to the analytic classes. The question that is posed for the future is if the same property holds for higher cohomological equations.

Our main tool, the asymptotic behavior of $\varepsilon$-neighborhoods of sets (also called \emph{tube functions} in literature), was exploited in series of problems so far. The first term in the asymptotic expansion is related to the notion of box dimension and Minkowski content, see e.g. \cite{tricot} for exact definitions. Computed for appropriate invariant sets, they show intrinsic properties of dynamical systems. In the famous Weyl-Berry conjecture, the box dimension and the Minkowski content of the boundary for Laplace equation are related to the eigenvalue counting function, see \cite{lappom}. In discrete systems, box dimension and Minkowski content of orbits which accumulate at a fixed point reveal multiplicity of the generating function, moment of bifurcation or the complexity of bifurcation, see \cite{lana}, \cite{cheby}, \cite{belgproc}.   

In previous article \cite{resman}, we have proven that more terms in the asymptotic expansion of $\varepsilon$-neighborhoods are needed to read the formal class of a complex germ. The natural continuation was to investigate if analytic class can be seen in functions of $\varepsilon$-neighborhoods of orbits, in order to see to what extent $\varepsilon$-neighborhoods of orbits describe the germ.
\smallskip

\subsection{Definitions and main results}\label{onetwo}\

Let $$f(z)=\lambda z+a_1 z^{k+1}+a_2z^{k+2}+o(z^{k+2}),\ a_i\in\mathbb{C},\ k\in\mathbb{N},$$ $\lambda=\exp(2\pi i m/n),\ m,\ n\in\mathbb{N},$ be a parabolic diffeomorphism. Without loss of generality, in the article we assume that $\lambda=1$. Otherwise, instead of $f$, we consider its appropriate iterate, $f^{(\circ n)}$. Near the origin, the orbits of $f$ form the so-called \emph{Leau-Fatou flower}, see e.g. \cite{loray} or \cite{milnor}. In short, there exist $k$ attracting and $k$ repelling petals, around equidistant repelling and attracting directions. Petals are domains accumulating on $0$, bisected by attracting(repelling) direction and tangent to two repelling(attracting) directions at the origin. Attracting and repelling directions are normalized complex numbers $(-a_1)^{-1/k}$,\ $a_1^{-1/k}$ respectively. Orbits are tangent to attracting or repelling directions at the origin, see Figure~\ref{petal}. 

\begin{figure}[ht]\label{petal}
\hspace{1.2cm}\includegraphics[scale=0.3]{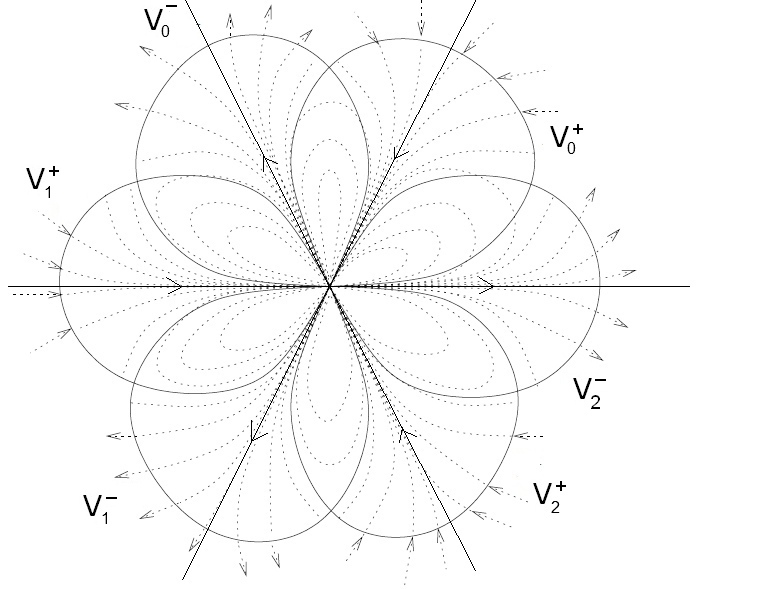}
\caption{\scriptsize{Attracting and repelling petals and directions for e.g. $f(z)=z+z^4+o(z^4)$}.}
\end{figure}

Let $V_+$ denote any attracting petal of $f(z)$. Let $$S^f(z)=\{z_n\ |\ z_n=f^{\circ n}(z),\ n\in\mathbb{N}\}$$ denote the orbit of $f$ with the initial point $z$ lying in $V_+$. Using difference equation $z_{n+1}-z_n=a_1 z^{k+1}+a_2 z^{k+2}+o(z^{k+2})$, the standard expansion for $f^{\circ n}(z)$ follows, see e.g. \cite{milnor}: 
$$
f^{\circ n}(z)=(-ka_1)^{-\frac{1}{k}}\cdot n^{-\frac{1}{k}}+o(n^{-\frac{1}{k}}),\ n\to\infty.
$$
Further expansion can be found in \cite{resman}.

\begin{definition}\label{dirarea}$[$see \cite{resman}$]$ Let $S^f(z),\ z\in V_+,$ be an attracting orbit of $f(z)$ with initial point $z$. Let $S^f(z)_\varepsilon$ denote its $\varepsilon$-neighborhood. \emph{Directed area of the $\varepsilon$-neighborhood of the orbit $S^f(z)$} is the complex number
$$
A^\mathbb{C}(z,\varepsilon)=A(S^f(z)_\varepsilon)\cdot t_{S^f(z)_\varepsilon},
$$
where $A(S^f(z)_\varepsilon)$ denotes the area and $t_{S^f(z)_\varepsilon}$ the center of the mass of the $\varepsilon$-neighborhood of the orbit.
\end{definition}
Here, for the sake of convenience, the directed area is defined in a slightly different manner than in \cite{resman}. In \cite{resman}, the center of mass was replaced by the normalized center of mass, $\frac{t_{S^f(z)_\varepsilon}}{|t_{S^f(z)_\varepsilon}|}$.
\medskip

 Let us remind the asymptotic expansion of $A^\mathbb{C}(z,\varepsilon),\ z\in V_+,$ from \cite{resman}, as $\varepsilon\to 0$:
\begin{align}\label{asy}
A^\mathbb{C}(z,\varepsilon)=&q_1\varepsilon^{1+\frac{2}{k+1}}+q_2\varepsilon^{1+\frac{3}{k+1}}+\ldots+q_{k-1}\varepsilon^{1+\frac{k}{k+1}}+q_k\varepsilon^{2}\log\varepsilon+\nonumber \\+H^{f,V_+}(z)\varepsilon^2+
&q_{k+1}\varepsilon^{2+\frac{1}{k+1}}\log\varepsilon+R(z,\varepsilon),\ R(z,\varepsilon)=O(\varepsilon^{2+\frac{1}{k+1}}),\nonumber\\
&\hspace{2.5cm} k\in\mathbb{N},\ k>1,\ q_i\in\mathbb{C},\ i=1,\ldots k+1.
\end{align}
Due to the modification in definition with respect to \cite{resman}, the exponents are shifted by $\frac{1}{k+1}$, but the proof is essentially the same. Let us remark here that the above expansion and formulas for the coefficients given in \cite{resman} hold in the case $k>1$. In the special case when $k=1$, we have the expansion:
\begin{align*}
A^\mathbb{C}(z,\varepsilon)=q_1&\varepsilon^2\log\varepsilon+H^{f,V_+}(z)\varepsilon^2+\\
&+q_2\varepsilon^{\frac{5}{2}}\log\varepsilon+R(z,\varepsilon),\ R(z,\varepsilon)=O(\varepsilon^\frac{5}{2}),\ q_1,\ q_2\in\mathbb{C}.\nonumber
\end{align*}
The coefficients are in this case given by slightly different formulas than stated in \cite{resman}, but the properties of the expansion are the same. 
In above expansions, $q_1,q_2,\ldots,q_{k+1}$ are complex functions of coefficients of $f$ and \emph{do not depend on the initial point}. The coefficient $H^{f,V_+}(z)$ is the first coefficient that depends on the initial point $z$. It is a well-defined function in $z$ on $V_+$. 
\begin{definition}
\emph{The principal initial point dependent part of the directed area of the $\varepsilon$-neighborhoods of orbits in $V_+$} is the first coefficient $H^{f,V_+}(z)$ in the expansion~\eqref{asy} depending on the initial point $z$, regarded as a function of $z\in V_+$, $z\mapsto H^{f,V_+}(z)$.
\end{definition}
By abuse, for the sake of simplicity, we will call function $z\mapsto H^{f,V_+}(z)$ only \emph{the principal part of area for $f$ on $V_+$}. Naturally, on a repelling sector $V_-$, we define \emph{the principal part of area for $f^{\circ -1}$ on $V_-$} as the first coefficient that depends on the initial point in the expansion \eqref{asy} for the orbit $S^{f^{\circ -1}}(z),\ z\in V_-,$ of the inverse diffeomorphism $f^{\circ -1}$. It is regarded as function of $z\in V_-$. We denote it by $z\mapsto H^{f^{\circ -1},V_-}(z)$.
\medskip

Let us comment shortly on properties of $A^\mathbb{C}(z,\varepsilon)$, as function of $\varepsilon>0$ and $z\in V_+$. They justify why we concentrate on the principal part in $A^\mathbb{C}(z,\varepsilon)$, as the only part that displays analytic property. All the results are elaborated in Section~\ref{three}. We show in Proposition~\ref{nonasy} that, for fixed initial point $z\in V_+$, the remainder $R(\varepsilon,z)$ in \eqref{asy} cannot be expanded any further in power-logarithmic scale with respect to $\varepsilon$. Moreover, it has accumulation of singularities at $\varepsilon=0$, see Proposition~\ref{accu}. Furthermore, for $\varepsilon$ fixed, $A^\mathbb{C}(z,\varepsilon)$ is not a sectorially analytic function of $z\in V_+$, see Proposition~\ref{no}. 

On the other hand, we prove in Section~\ref{three} the following Theorem~\ref{ppart} about sectorial analyticity of the principal parts of area. For simplicity, we consider only the germs from the model formal class $(k=1,\lambda=0)$, that is, formally equivalent to the model diffeomorphism $f_0$, $f_0(z)=Exp(z^2\frac{d}{dz})=\frac{z}{1-z}$. Furthermore, we assume that $f$ is \emph{prenormalized}. That is, the first normalizing change of variables is already made, and further we admit only changes of variables tangent to the identity. Therefore, all such diffeomorphisms are of the form:
$$
f(z)=z+z^2+z^3+o(z^3).
$$
In this case, locally there exists only one attracting petal $V_+$, invariant for $f$ (around negative real axis) and one repelling petal $V_-$, invariant for $f^{\circ -1}$ (around positive real axis). We denote the functions $H^{f,V_+}$ and $H^{f^{\circ -1},V_-}$ simply by $H^f$ and $H^{f^{\circ -1}}$.
\medskip

\begin{theorem}[Properties of principal parts of areas for $f$]\label{ppart} The principal parts of areas, $H^f$ for $f$ and $H^{f^{\circ -1}}$ for $f^{\circ -1}$, are analytic functions on the attracting sector $V_+$ and on the repelling sector $V_-$ respectively. Moreover, $H^f$ and $H^{f^{\circ -1}}$ are, up to explicit constants independent of $f$, related to the unique sectorially analytic solutions without constant term, $H_+$ on $V_+$ and $H_-$ on $V_-$, of difference equation:
\begin{equation}\label{ha}
H(z)-H(f(z))=\pi z,
\end{equation}
The following explicit formulas hold:
\begin{align*}
&H_+(z)-\frac{\pi}{4}+i\pi^2=H^f(z),\quad z\in V_+,\\
&H_-(z)-\frac{\pi}{4}=\pi z-H^{f^{\circ -1}}(z),\quad z\in V_-.\nonumber
\end{align*}
\end{theorem}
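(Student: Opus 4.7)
The plan is to first derive the cohomological equation~\eqref{ha} from a geometric self-similarity of orbits, then establish sectorial existence and uniqueness for its solutions, and finally identify the two explicit additive constants by asymptotic analysis at the origin.

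The starting observation is the trivial orbit decomposition $S^{f}(z)=\{z\}\cup S^{f}(f(z))$. For $\varepsilon$ small enough with respect to $z$ (roughly, $\varepsilon<|z-f(z)|/2\sim|z|^{2}/2$ on the petal), the disk $\overline{B(z,\varepsilon)}$ is disjoint from $\bigcup_{n\ge 1}\overline{B(f^{\circ n}(z),\varepsilon)}$. Since area and first moment are additive over disjoint pieces, the directed area splits as
$$A^{\mathbb{C}}(z,\varepsilon)=\pi\varepsilon^{2}z+A^{\mathbb{C}}(f(z),\varepsilon).$$
Substituting expansion~\eqref{asy} on both sides, the $z$-independent coefficients $q_{1},\dots,q_{k+1}$ cancel in the difference, and matching the $\varepsilon^{2}$ coefficient gives $H^{f}(z)-H^{f}(f(z))=\pi z$ on $V_{+}$, i.e.\ \eqref{ha}. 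The same argument applied to $f^{\circ -1}$ on $V_{-}$ produces $H^{f^{\circ -1}}(z)-H^{f^{\circ -1}}(f^{\circ -1}(z))=\pi z$; substituting $z\mapsto f(z)$ and rearranging then shows that $\pi z-H^{f^{\circ -1}}(z)$ satisfies~\eqref{ha} on $V_{-}$.

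Analyticity of $H^{f}$ on $V_{+}$ is inherited from the construction of the expansion~\eqref{asy} in \cite{resman}: the partial directed area is analytic in $z\in V_{+}$ for each $\varepsilon$ in a suitable range, and the extraction of the $\varepsilon^{2}$-coefficient preserves analyticity in $z$. For the existence of the canonical $H_{\pm}$, I would first solve~\eqref{ha} formally against $f(z)=z+z^{2}+z^{3}+o(z^{3})$; the formal solution has the shape $-\pi\log z+zF(z)$ with $F\in\mathbb{C}[[z]]$, and standard Borel summation for parabolic difference equations yields a sectorial sum on each of $V_{\pm}$. Equivalently one may regularize the telescoping series $\pi\sum_{n\ge 0}f^{\circ n}(z)$ by subtracting its Fatou-coordinate logarithmic tail. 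Uniqueness up to a constant is standard: if $K$ denotes the difference of two sectorial solutions, then $K=K\circ f^{\circ n}$ for every $n$, and since $f^{\circ n}(z)\to 0$ in $V_{+}$, the requirement that $K$ has no constant term in its asymptotic expansion forces $K\equiv 0$.

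It then follows that $H^{f}=H_{+}+c_{+}$ and $\pi z-H^{f^{\circ -1}}=H_{-}+c_{-}$ for explicit constants $c_{\pm}$. To evaluate $c_{+}$ I would expand $H^{f}(z)$ asymptotically as $z\to 0$ along $V_{+}$ up to constant order and compare with $H_{+}(z)\sim -\pi\log z+O(z)$: the imaginary shift $i\pi^{2}$ reflects the fact that on $V_{+}$ (around the negative real axis) the natural branch of $\log z$ carries an imaginary part $\pm i\pi$, so that $-\pi\log z$ differs from the log used in the geometric computation of $H^{f}$ by $-i\pi^{2}$, while the real shift $-\pi/4$ comes from the boundary contributions of the initial disk together with the overlap region near the accumulation of the orbit. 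On $V_{-}$, centered around the positive real axis, the natural branch of $\log z$ is real, so only the $-\pi/4$ shift survives. The main obstacle is precisely this third step: the explicit determination of $c_{\pm}$ demands a careful accounting of the constant coefficient in the asymptotic expansion of $A^{\mathbb{C}}(z,\varepsilon)$ as $z\to 0$, separating isolated-disk and overlap contributions and tracking the branch of $\log z$ on each petal. A secondary technical point is that ``without constant term'' must be interpreted as vanishing of the constant coefficient in the asymptotic expansion, not as a pointwise limit at $0$, since $H_{\pm}$ are unbounded at the origin.
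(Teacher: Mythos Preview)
Your derivation of the cohomological equation from the orbit decomposition $S^{f}(z)=\{z\}\cup S^{f}(f(z))$ is correct and matches the paper's Proposition~\ref{eqprin}.

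However, your analyticity argument has a genuine gap. You write that analyticity of $H^{f}$ is ``inherited from the construction of the expansion~\eqref{asy} in \cite{resman}: the partial directed area is analytic in $z\in V_{+}$ for each $\varepsilon$ in a suitable range, and the extraction of the $\varepsilon^{2}$-coefficient preserves analyticity in $z$.'' This is false: Proposition~\ref{no} of the paper establishes precisely that $z\mapsto A^{\mathbb{C}}(z,\varepsilon)$ is \emph{not} analytic on any sector inside $V_{+}$ for fixed $\varepsilon$ (the absolute value $|z-f(z)|$ appearing in the crescent formula destroys analyticity). So there is nothing to inherit, and even if there were, extracting a coefficient from an asymptotic expansion does not in general preserve analyticity.

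The paper's route runs in the opposite direction. It first derives an explicit geometric formula for $H^{f}$ by splitting into nucleus and tail contributions: the nucleus principal part is the constant $-\frac{\pi}{4}(1+\log 4)$, and the tail principal part is $\frac{\pi}{2}\log 2+\pi\cdot c_{0}\big(\sum_{l=0}^{n}f^{\circ l}(z)\big)$, where $c_{0}(\cdot)$ denotes the constant term in the asymptotic expansion as $n\to\infty$. This gives $H^{f}(z)=-\frac{\pi}{4}+\pi\cdot c_{0}\big(\sum_{l=0}^{n}f^{\circ l}(z)\big)$. Independently, the analytic sectorial solution $H_{+}$ is constructed (Proposition~\ref{formal}) as the uniform limit
\[
H_{+}(z)=\pi\lim_{n\to\infty}\Big(\sum_{l=0}^{n}f^{\circ l}(z)-\log f^{\circ(n+1)}(z)\Big),
\]
whose analyticity follows from Weierstrass' theorem. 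Comparing the two formulas pointwise --- using the expansion $-\log f^{\circ(n+1)}(z)-\log n=-i\pi+o(1)$ on $V_{+}$, which is where the $i\pi^{2}$ actually comes from --- yields $H_{+}(z)=H^{f}(z)+\frac{\pi}{4}-i\pi^{2}$, and analyticity of $H^{f}$ is then a \emph{consequence} of analyticity of $H_{+}$, not the other way around.

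Your informal account of the constants (branch of logarithm on $V_{+}$, boundary/overlap contribution $-\pi/4$) points in the right direction but is not a computation; the paper obtains both constants from the explicit nucleus/tail decomposition and the $\log f^{\circ(n+1)}$ asymptotics, and you would need to carry out something equivalent.
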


The equation \eqref{ha} resembles to the trivialization equation 
\begin{equation}\label{abel}
\Psi(f(z))-\Psi(z)=1
\end{equation}
for a parabolic diffeomorphism, called \emph{Abel equation}. Equation \eqref{abel} is used for obtaining \' Ecalle-Voronin moduli of analytic classification, see e.g. \cite{dudko}, \cite{loray}, \cite{sauzin}. There exist sectorially analytic solutions on petals, $\Psi_+$ on $V_+$ and $\Psi_-$ on $V_-$, the so-called \emph{Fatou coordinates}. Their comparison reveals analytic class of $f$. For more details, see Section~\ref{app} or references above.

In this standard situation, the Fatou coordinates $\Psi_+$ and $\Psi_-$ glue to a global Fatou coordinate, analytic in some punctured neighborhood of the origin $U \setminus \{0\}$, if and only if $f$ belongs to the analytic class of $$f_0(z)=\frac{z}{1-z}=\exp(z^2\frac{d}{dz}).$$ That is, if and only if $f$ is a time-one map of a holomorphic vector field. 

In the sequel, we characterize the germs for which the principal parts $H^f$ and $H^{f^{\circ -1}}$ are globally analytic and compare the results with analytic classification results. 
\medskip

We state here the definition of \emph{cohomological difference equations}, which generalize both equations \eqref{ha} and \eqref{abel}. The definition of is known in literature, see e.g. \cite{beli} or \cite{lyu}. Such equations have been mentioned also in \cite[Section A.6]{loraypre}.
\begin{definition}[A cohomological equation for a diffeomorphism $f$]\label{propabelgen}
\emph{A cohomological equation for a diffeomorphism $f$ with the right-hand side $g\in\mathbb{C}\{z\},\ g\equiv\!\!\!\!\!/\ 0$,} is the difference equation
\begin{equation}\label{abelgen}
H(f(z))-H(z)=g(z),
\end{equation}
in some neighborhood of $z=0$.

The function $H$ that satisfies \eqref{abelgen} on some domain is called \emph{a solution of the cohomological equation on the given domain}. 

In particular, if $g=C\cdot \text{Id}^{\,m}$, $C\in\mathbb{C}$, $m\in\mathbb{N}_0$, we call equation \eqref{abelgen} the \emph{$m$-Abel equation}.
\end{definition}
For $g\equiv 1$ we get the Abel equation and for $g(z)=-\pi z$ equation \eqref{ha} for principal parts of areas for $f$. 

In Section~\ref{two}, we discuss solutions of cohomological equations. The results on existence of sectorially analytic solutions are mostly taken from \cite{loraypre}. Our result in Section~\ref{two} is the following Theorem~\ref{glo}. It gives necessary and sufficient conditions on a diffeomorphism $f$ in terms of right-hand side $g$, for the cohomological equation to have a globally analytic solution $H$ in some neighborhood of $0$. That is, its sectorial analytic solutions agree on the components of $V_+\cap V_-$.

Let the right-hand side $g$ of \eqref{abelgen} be \emph{of multiplicity $l$}. That is, 
\begin{equation}\label{mult}
g(z)=\alpha_l z^l+o(z^l)\in \mathbb{C}\{z\},\ \alpha_l\neq 0,\ l\in\mathbb{N}_0.
\end{equation} 
If $l=0$ ($\alpha_0\neq 0$) or $l=1$ ($\alpha_0=0,\ \alpha_1\neq 0$), let us define $$h_{\alpha_0,\alpha_1}(z)=-\frac{\alpha_0}{z}+\alpha_1 \log z.$$

\begin{theorem}[Existence and uniqueness of a globally analytic solution of a cohomological equation]\label{glo}
Let $f$ be a parabolic diffeomorphism formally equivalent to $f_0$. Let $g\in \mathbb{C}\{z\},\ g\equiv\!\!\!\!\!/\ 0,$ be of multiplicity $l\in\mathbb{N}_0$, as in \eqref{mult}. The cohomological equation
$$
H(f(z))-H(z)=g(z)
$$
has a (unique up to a constant) globally analytic solution on some neighborhood of $z=0$ if and only if the diffeomorphism $f$ is of the form$^{\star}$\footnote{$^\star$ If $\alpha_1\neq 0$, then $h_{\alpha_0,\alpha_1}$ contains a logarithmic term and is not a well-defined nor invertible function on some neighborhood of zero. We consider two branches of function $h_{\alpha_0,\alpha_1}$ defined on overlapping sectors, which are invertible. We then glue two sectors together in function $f$ analytic at zero by Riemann theorem on removable singularities. The same holds for function $H$ below.}
\begin{equation}\label{f2}
f(z)=\left\{ \begin{array}{ll}
\varphi^{-1}\bigg(h_{\alpha_0,\alpha_1}^{-1}\Big(h_{\alpha_0,\alpha_1}\big(\varphi(z)\big)+g(z)\Big)\bigg)& ,\ l=0,\ 1,\\[0.2cm]
\varphi^{-1}\left(\varphi(z)\cdot\Big(1+\frac{l-1}{\alpha_l}\frac{g(z)}{\varphi(z)^{l-1}}\Big)^{\frac{1}{l-1}}\right)& ,\ l\in\mathbb{N},\ l\geq 2.
\end{array}\right.
\end{equation}
for some analytic germ $\varphi$, $\varphi(z)\in z+z^2\mathbb{C}\{z\}$.

The globally analytic solution $H$ is then given by 
\begin{equation}\label{nakon}
H(z)=\left\{ \begin{array}{ll}
h_{\alpha_0,\alpha_1}\circ \varphi(z)& ,\ l=0,\ 1,\\[0.2cm]
\frac{\alpha_l}{l-1}\varphi(z)^{l-1}& ,\ l\in\mathbb{N},\ l\geq 2.
\end{array}\right.
\end{equation}
\end{theorem}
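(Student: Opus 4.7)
The plan is to prove sufficiency by a direct substitution and necessity by extracting the uniformizing germ $\varphi$ from the given global solution $H$.

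For sufficiency, I would assume $f$ has the form \eqref{f2}, define $H$ by \eqref{nakon}, and verify the cohomological equation by a one-line computation. In the case $l\ge 2$, the displayed formula for $f$ rearranges to $\varphi(f(z))^{l-1}=\varphi(z)^{l-1}+\tfrac{l-1}{\alpha_l}g(z)$, so multiplying by $\alpha_l/(l-1)$ yields $H\circ f-H=g$. For $l=0,1$, applying $h_{\alpha_0,\alpha_1}$ to both sides of the defining relation for $f$ produces $h_{\alpha_0,\alpha_1}(\varphi(f(z)))-h_{\alpha_0,\alpha_1}(\varphi(z))=g(z)$, which is precisely the cohomological equation for $H=h_{\alpha_0,\alpha_1}\circ\varphi$.

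For necessity, let $H$ be a globally analytic solution in some (possibly punctured) neighborhood of $0$. The idea is to construct $\varphi\in z+z^2\mathbb{C}\{z\}$ that normalizes $H$ to the model shape in \eqref{nakon}. For $l\ge 2$, expanding both sides of $H\circ f-H=g$ as power series and using $f(z)=z+z^2+o(z^2)$, an inductive coefficient match forces $H(z)-H(0)=\frac{\alpha_l}{l-1}z^{l-1}+O(z^l)$ with all lower Taylor coefficients of $H$ vanishing. I would then set
$$\varphi(z)=\left(\tfrac{l-1}{\alpha_l}\bigl(H(z)-H(0)\bigr)\right)^{1/(l-1)}$$
with the branch of the root normalized by $\varphi'(0)=1$; this is an analytic germ in $z+z^2\mathbb{C}\{z\}$ satisfying $H-H(0)=\tfrac{\alpha_l}{l-1}\varphi^{l-1}$ by construction, and solving $H\circ f-H=g$ for $f$ reproduces the second line of \eqref{f2}. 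For $l=0,1$, an analogous leading-order analysis (now comparing the polar and logarithmic terms of $H\circ f-H$ with those of $g$) pins down the principal singular part of $H$ to be exactly $h_{\alpha_0,\alpha_1}(z)$, after which $\varphi=h_{\alpha_0,\alpha_1}^{-1}\circ H$ provides the required germ and the equation rearranges to the first line of \eqref{f2}.

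Uniqueness up to a constant follows because the prescribed singular parts depend only on $(\alpha_0,\alpha_1)$ or $\alpha_l$ and not on $H$, so the difference $K=H_1-H_2$ of two globally analytic solutions is analytic at $0$ and satisfies $K\circ f=K$; since orbits of $f$ accumulate at the fixed point $0$ where $K$ is continuous, $K$ is constant along each orbit, and by density of orbits in a petal together with analytic continuation, $K$ is constant throughout. The main obstacle will be the necessity direction in the cases $l=0,1$: to promote $\varphi=h_{\alpha_0,\alpha_1}^{-1}\circ H$ to a single-valued analytic germ tangent to the identity one must, following the branch convention of the footnote to \eqref{f2}, choose inverse branches of the multivalued function $h_{\alpha_0,\alpha_1}$ on two overlapping sectors, verify that the two determinations of $\varphi$ agree on the overlap, and glue them by Riemann's theorem on removable singularities.
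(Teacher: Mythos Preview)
Your proposal is correct and follows essentially the same route as the paper. Both arguments hinge on writing the (formal) solution in the normalized shape $H=\tfrac{\alpha_l}{l-1}\varphi^{l-1}$ for $l\ge 2$ (resp.\ $H=h_{\alpha_0,\alpha_1}\circ\varphi$ for $l=0,1$) with $\varphi$ tangent to the identity, observing that analyticity of $H$ is equivalent to analyticity of $\varphi$, and then substituting back into the cohomological equation to recover the displayed form of $f$; the converse is the same direct check in both.

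The only notable differences are packaging. For the equivalence ``$H$ analytic $\Leftrightarrow$ $\varphi$ analytic'', the paper isolates a small lemma (Proposition~\ref{seriesana}: if $h\in\mathbb{C}\{z\}$ is nonconstant and $\widehat T=h\circ\widehat g$ with $\widehat g\in\mathbb{C}[[z]]$, then $\widehat T$ is analytic iff $\widehat g$ is), whereas you construct $\varphi$ directly via the $(l-1)$-th root (resp.\ via $h_{\alpha_0,\alpha_1}^{-1}$); these are two phrasings of the same fact. For the $l=0,1$ case the paper avoids your branch-gluing discussion by rewriting $\widehat H(z)=-\tfrac{\alpha_0}{z}+\alpha_1\log z+g\bigl(\tfrac{\widehat\varphi(z)-z}{z}\bigr)$ with $g$ a nonconstant convergent germ, so that the lemma applies to the single-valued quantity $(\widehat\varphi(z)-z)/z$; this is slightly cleaner than handling sectorial inverses of $h_{\alpha_0,\alpha_1}$, but your approach is also valid. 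Finally, for uniqueness the paper simply invokes the uniqueness of the formal solution without constant term (Proposition~\ref{formal}), while your dynamical argument ($K\circ f=K$ with $K$ analytic at $0$ forces $K$ constant, since orbits accumulate at the fixed point) is an equally short alternative.
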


Here and in the sequel, we use the term globally analytic in a slightly incorrect manner. In the case where the linear term of $g$ is non-zero, $H$ contains a logarithmic term in the asymptotic expansion, as $z\to 0$. Also, when $g$ contains a constant term, the term $-1/z$ appears in the expansion. Therefore, by globally analytic, we actually mean that $H$ is globally analytic on some neighborhood of $0$, after possibly subtracting the logarithmic term $\log z$ and/or the term $-1/z$. The \emph{global analyticity} of the solution $H$ of \eqref{abelgen} in these cases in fact means the global analyticity of the solution $R$, $H(z)=-\frac{\alpha_0}{z}+\alpha_1 \log z+R(z)$, of the modified equation $$R(f(z))-R(z)=g(z)+\alpha_0\left(\frac{1}{f(z)}-\frac{1}{z}\right)-\alpha_1 \log \big(\frac{f(z)}{z}\big).$$

In Section~\ref{app}, we apply Theorem~\ref{glo} to Abel equation \eqref{abel} to obtain the well-known result about Fatou coordinate being global if and only if $f$ is analitically conjugate to the model $f_0$. Furthermore, we apply Theorem~\ref{glo} to equation~\eqref{ha} for the principal parts of areas. Thus we obtain  Theorem~\ref{pringlo} below. It connects global analyticity property of the principal parts of areas with the intrinsic properties of $f$ and shows that global analyticity is not the \emph{rule}. 
\begin{theorem}[\emph{Global} principal parts of areas]\label{pringlo}
The principal parts\linebreak $(H^f-i\pi^2)$ on $V_+$ and $(\pi\cdot\text{Id}-H^{f^{\circ -1}})$ on $V_-$ glue to a global analytic function on a neighborhood of $z=0$ if and only if the diffeomorphism $f$ is of the form
$$
f(z)=\varphi^{-1}\left(e^z\cdot \varphi(z)\right),
$$ 
for some analytic germ $\varphi(z)\in z+z^2\mathbb{C}\{z\}$. The principal parts are then given by 
\begin{align*}
&H^f(z)=-\pi \log \varphi(z)+i\pi^2-\frac{\pi}{4},\ \ z\in V_+,\\
&H^{f^{\circ -1}}(z)=\pi z+\pi \log \varphi(z)+\frac{\pi}{4},\ \ z\in V_-.
\end{align*}
Here, the branches of complex logarithm are determined by the petals.
\end{theorem}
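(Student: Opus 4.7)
The strategy is to specialize Theorem~\ref{glo} directly to equation~\eqref{ha} and then translate the conclusion through the identifications supplied by Theorem~\ref{ppart}. First I rewrite~\eqref{ha} in the normalized form of Definition~\ref{propabelgen}, namely $H(f(z))-H(z)=-\pi z$. The right-hand side $g(z)=-\pi z$ has multiplicity $l=1$ with $\alpha_0=0$ and $\alpha_1=-\pi$, so Theorem~\ref{glo} applies in its $l\in\{0,1\}$ branch with auxiliary function $h_{0,-\pi}(z)=-\pi\log z$ and (branchwise) inverse $h_{0,-\pi}^{-1}(w)=e^{-w/\pi}$.

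Next I substitute these data into formulas~\eqref{f2} and~\eqref{nakon}. A short computation gives
$$
h_{0,-\pi}(\varphi(z))+g(z)=-\pi\bigl(\log\varphi(z)+z\bigr),
$$
whence $h_{0,-\pi}^{-1}(\,\cdot\,)=e^{z}\varphi(z)$, so~\eqref{f2} collapses to $f(z)=\varphi^{-1}(e^{z}\varphi(z))$ for some analytic germ $\varphi\in z+z^{2}\mathbb{C}\{z\}$. Formula~\eqref{nakon} then delivers the globally analytic solution $H(z)=-\pi\log\varphi(z)$. Since $\varphi(z)/z=1+O(z)$, the difference $\log\varphi(z)-\log z$ is analytic at $0$ and vanishes there, so the non-logarithmic part of $H$ carries no constant term; this matches the normalization of the sectorial solutions $H_+$ and $H_-$ from Theorem~\ref{ppart}.

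Now I translate back. Theorem~\ref{ppart} gives
$$
H^{f}(z)-i\pi^{2}=H_+(z)-\frac{\pi}{4}\ \text{on }V_+,\qquad \pi z-H^{f^{\circ-1}}(z)=H_-(z)-\frac{\pi}{4}\ \text{on }V_-,
$$
so $(H^f-i\pi^2)$ and $(\pi\cdot\text{Id}-H^{f^{\circ-1}})$ glue to a global analytic function near $0$ if and only if $H_+$ and $H_-$ do. By the previous paragraph this happens precisely when $f(z)=\varphi^{-1}(e^{z}\varphi(z))$, and substituting $H_\pm(z)=-\pi\log\varphi(z)$ (with the branch of logarithm determined by the petal) into the two displayed identities yields the explicit formulas for $H^{f}$ and $H^{f^{\circ-1}}$ stated in the theorem.

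The step I expect to require most care is the branchwise handling of $\log\varphi$ together with the convention of \emph{global analyticity} introduced after Theorem~\ref{glo}. Because $g$ has a nonzero linear term, the solution $H$ unavoidably contains a $-\pi\log z$ summand; what really has to glue is the analytic remainder $H(z)+\pi\log z=-\pi\log(\varphi(z)/z)$, which is automatic once $\varphi(z)/z$ is recognized as an analytic nonvanishing germ at $0$ with value $1$. The bookkeeping of logarithm branches on the two components of $V_+\cap V_-$ is the only nontrivial ingredient in what is otherwise a direct specialization of Theorem~\ref{glo}.
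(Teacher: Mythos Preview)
Your proof is correct and follows exactly the same approach as the paper: specialize Theorem~\ref{glo} to the right-hand side $g(z)=-\pi z$ and then transfer the result to the principal parts via the identities of Theorem~\ref{ppart}. You have merely spelled out in detail the substitution into \eqref{f2} and \eqref{nakon} and the derivation of the explicit formulas for $H^f$ and $H^{f^{\circ-1}}$, which the paper leaves implicit.
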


In Section~\ref{app}, we compare Theorem~\ref{pringlo} with the result about global Fatou coordinate mentioned above. The class of diffeomorphisms with \emph{global principal parts of areas} is different from the class of diffeomorphisms analytically conjugated to $f_0$. In Section~\ref{fourhalf}, we give examples of diffeomorphisms analytically conjugated to the model $f_0$, whose principal parts of areas can be glued globally, as well as those whose principal parts of areas are only sectorially analytic. On the other hand, we give examples of diffeomorphisms with globally analytic principal parts that are not analytically conjugated to $f_0$. This shows that the difference of sectorial solutions of cohomological equation \eqref{ha}, $H_+-H_-$ on components of $V_+\cap V_-$ (equivalently, of the principal parts of areas) is not appropriate for reading the analytic class, as was the case with sectorial Fatou coordinates.
\smallskip

This motivates us to introduce new classifications of parabolic diffeomorphisms with respect to higher-order cohomological equations $$H(f(z))-H(z)=z^m,\ m\geq 1,$$ in Section~\ref{seven}, using the differences of sectorial solutions in a way that mimics the analytic classification obtained from Abel equation. The newly introduced \emph{$m$-conjugacy classes} are described by pairs of analytic germs (up to some identifications) that we call \emph{$m$-moments}. Analytic classes correspond to $0$-moments. This puts our equation \eqref{ha} in a more general context. 

We support our previous observations from Section~\ref{fourhalf} by proving that the analytic classes and the $1$-conjugacy classes are \emph{far away} from each other, in a transversal position. More precisely, we show that any pair of analytic germs can be realized as $1$-moment of some parabolic germ tangent to the identity. Moreover, each $1$-class admits a representative in any analytic class. 

\begin{theorem}[\emph{Transversality} theorem]\label{bijectiveness}\ 
Let $\Phi$ be a mapping that associates to each germ $f$ from model formal class its $1$-moment. The mapping $\Phi$ restricted to any analytic class is \emph{surjective} onto the set of all $1$-moments.
\end{theorem}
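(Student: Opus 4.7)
The plan is to exhaust the prescribed analytic class by analytic conjugates of a fixed representative, and to realize any prescribed $1$-moment by a suitable choice of the conjugating germ. Fix $f_0$ in the given analytic class. For each analytic germ $\varphi\in z+z^2\mathbb{C}\{z\}$, the conjugate $g_\varphi:=\varphi^{-1}\circ f_0\circ\varphi$ automatically lies in the analytic class of $f_0$, so it is enough to show that as $\varphi$ varies, $\Phi(g_\varphi)$ runs over every $1$-moment.

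\textbf{Transformation of $\Phi$ under conjugation.} Writing $\varphi^{-1}(w)=w+\psi(w)$ with $\psi\in w^2\mathbb{C}\{w\}$, and setting $K:=H\circ\varphi^{-1}$ in the $1$-Abel equation $H\circ g_\varphi-H=z$, the substitution $w=\varphi(z)$ turns it into
\[
K\circ f_0-K=\varphi^{-1}(w)=w+\psi(w)
\]
on the petals of $f_0$. Linearity and uniqueness of normalized sectorial solutions yield the clean decomposition $K_\pm=H_\pm^{f_0}+L_\pm^\psi$, where $L_\pm^\psi$ are the sectorial analytic solutions on $V_\pm^{f_0}$ of $L\circ f_0-L=\psi$. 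Transporting to the canonical \'Ecalle chart on the two components of $V_+^{f_0}\cap V_-^{f_0}$, in which the reparametrization induced by $\varphi$ acts as the identity modulo the chosen normalization, I obtain the transformation formula
\[
\Phi(g_\varphi)=\Phi(f_0)+\bigl(L_+^\psi-L_-^\psi\bigr).
\]
The theorem then reduces to showing that the correction map $\psi\mapsto(L_+^\psi-L_-^\psi)$, from $z^2\mathbb{C}\{z\}$ to pairs of analytic $f_0$-invariant germs on the two overlap components, is \emph{surjective} onto the whole set of $1$-moments (shifted by $-\Phi(f_0)$).

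\textbf{Realization via Cauchy--Heine.} Given a target correction $(a,b)$, I would first construct sectorial analytic functions $L_\pm$ on $V_\pm^{f_0}$, flat at the origin, whose differences on the two overlap components are exactly $a$ and $b$. This is the classical Cauchy--Heine construction along rays bisecting the repelling (respectively attracting) directions of $f_0$. Defining
\[
\psi:=L_\pm\circ f_0-L_\pm\quad\text{on }V_\pm^{f_0},
\]
the $f_0$-invariance of $(a,b)=L_+-L_-$ on the overlap forces the two local definitions to agree there, so $\psi$ extends to an analytic function on $V_+^{f_0}\cup V_-^{f_0}$. Matching of the formal expansions of $L_\pm$ at $0$ endows $\psi$ with a convergent power series of order at least $2$, whence Riemann's removable singularity theorem provides an extension $\psi\in z^2\mathbb{C}\{z\}$. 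Taking $\varphi=(\mathrm{id}+\psi)^{-1}$ closes the construction.

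\textbf{Main obstacle.} The delicate point is the last step: ensuring that the Cauchy--Heine contours can be chosen so that the sectorial expansions of $L_\pm$ at $0$ match formally, and that the cancellation in $L_\pm\circ f_0-L_\pm$ produces an honest analytic germ of multiplicity at least $2$. Everything else is routine bookkeeping in Fatou coordinates; the work of the theorem is concentrated in this analyticity-across-$0$ verification, which is where the transversality between analytic classes (recorded by the Fatou coordinate itself) and $1$-moments (recorded by the \emph{correction} built from $(a,b)$) is ultimately established.
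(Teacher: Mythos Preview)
Your approach is correct and essentially the same as the paper's: both realize the prescribed cocycle sectorially via Cauchy--Heine/Ramis--Sibuya (this standard theorem is precisely what resolves your ``main obstacle'' of matching formal expansions across $0$), then take $\psi:=L_\pm\circ f-L_\pm$, which glues analytically by $f$-invariance of the cocycle, to produce the conjugating map. The only cosmetic difference is that the paper targets the full $1$-moment $(g_1,g_2)$ directly and adds $\log z$ to the sectorial realizers to force the resulting conjugacy $\delta$ to be tangent to the identity, whereas you first split off $\Phi(f_0)$ via your transformation formula and target only the correction $(a,b)$, so that $\psi\in z^2\mathbb{C}\{z\}$ comes out automatically.
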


In particular, there exist germs in each analytic class with trivial $1$-moments, that is, with globally analytic solutions to equation \eqref{ha}. This gives a negative answer to our question of reading the analytic class from principal parts of areas. However, it opens new prospects of investigating the meaning of new classifications of germs with respect to higher-order moments and of determining the relative position of higher conjugacy classes to each-other.

Precise formulation of Theorem~\ref{bijectiveness} and its proof can be found in Subsection~\ref{sevenone}. The question of injectivity is also addressed in Subsection~\ref{sevenone} in Proposition~\ref{chartriv}.

Finally, in Section~\ref{five}, we give some questions for further research.  

\section{Analyticity of solutions of cohomological equations}\label{two}

By \emph{cohomological equation for a diffeomorphism $f$ with right-hand side $g\in\mathbb{C}\{z\}$}, we mean the equation
\begin{equation}\label{hha}
H(f(z))-H(z)=g(z)
\end{equation}
on some neighborhood of $z=0$, see Definition~\ref{propabelgen} in Section~\ref{one}.  

To understand equation \eqref{hha}, in the following Proposition~\ref{formal} we state results mostly taken and adapted from \cite[Section A.6]{loraypre}. In \cite{loraypre}, the case when $g(z)=O(z^2)$ was treated. Here we adapt it for all $g\in\mathbb{C}\{z\}$. We repeat the steps of the proof, since we need them in the proof of Theorem~\ref{ppart} in Section~\ref{three}. 
\begin{proposition}[Formal and sectorially analytic solutions of cohomological equations, \cite{loraypre}]\label{formal}
Let $g\in\mathbb{C}\{z\},\ g(z)=\alpha_0+\alpha_1 z+\alpha_2 z^2+o(z^2)$, $\alpha_i\in\mathbb{C}$, $i\in\mathbb{N}_0$. There exists a unique formal series solution $\widehat{H}$ of equation~\eqref{hha} without constant term of the form
\begin{equation}\label{form}
\widehat{H}(z)\in-\frac{\alpha_0}{z}+\alpha_1 \log z+z\mathbb{C}[[z]].
\end{equation}
All other formal series solutions in the given scale are obtained by adding an arbitrary constant term.

Furthermore, there exist unique sectorially analytic solutions $H_+$ and $H_-$ without constant term defined on petals $V_+$ and $V_-$ respectively, with 1-Gevrey asymptotic expansion \eqref{form}, as $z\to 0$.
\end{proposition}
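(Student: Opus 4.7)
My plan is to reduce the problem to the case where the right-hand side has multiplicity at least two, solve on each petal by a telescoping sum along the orbit, and finally verify the Gevrey estimate. The reduction rests on the observation that the singular and logarithmic parts of the candidate $\widehat{H}$ produce exactly the constant and linear terms of $g$. A short expansion yields
\begin{align*}
\frac{\alpha_0}{z}-\frac{\alpha_0}{f(z)}&=\alpha_0\,\frac{f(z)-z}{z\,f(z)}=\alpha_0+O(z^2),\\
\alpha_1\bigl(\log f(z)-\log z\bigr)&=\alpha_1\log\!\bigl(1+z+O(z^2)\bigr)=\alpha_1 z+O(z^2),
\end{align*}
so setting $\widetilde{g}(z):=g(z)+\alpha_0\bigl(1/f(z)-1/z\bigr)-\alpha_1\log(f(z)/z)$ and $\widetilde{H}(z):=H(z)+\alpha_0/z-\alpha_1\log z$ turns the equation into $\widetilde{H}\circ f-\widetilde{H}=\widetilde{g}$ with $\widetilde{g}\in\mathbb{C}\{z\}$ and $\widetilde{g}(z)=O(z^2)$.

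For the formal part, I write $\widetilde{H}(z)=\sum_{n\geq1}h_n z^n$. Using the prenormalization $f(z)=z+z^2+z^3+o(z^3)$, one has $f(z)^n-z^n=nz^{n+1}+O(z^{n+2})$, so the coefficient of $z^{n+1}$ in $\widetilde{H}\circ f-\widetilde{H}$ equals $n h_n$ plus a polynomial in $h_1,\ldots,h_{n-1}$. Matching with $\widetilde{g}$ produces a triangular recursion that determines $h_n$ uniquely for each $n\geq1$. Since any constant is annihilated by the difference operator $H\mapsto H\circ f-H$, the formal solution without constant term is unique, and all other formal solutions are obtained by adding an arbitrary constant.

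For the sectorial solution on $V_+$, the Leau--Fatou description of the dynamics gives $|f^{\circ n}(z)|\sim 1/n$ uniformly on compact subsectors of $V_+$, so $\widetilde{g}\bigl(f^{\circ n}(z)\bigr)=O(1/n^2)$ and the telescoping series
\[
\widetilde{H}_+(z)\;:=\;-\sum_{n=0}^{\infty}\widetilde{g}\bigl(f^{\circ n}(z)\bigr)
\]
converges normally on compact subsectors to an analytic function satisfying $\widetilde{H}_+\circ f-\widetilde{H}_+=\widetilde{g}$. Setting $H_+(z):=-\alpha_0/z+\alpha_1\log z+\widetilde{H}_+(z)$, with the branch of $\log$ consistent with $V_+$, yields the sectorial solution on $V_+$. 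The construction on $V_-$ is symmetric, summing over negative iterates and using the attracting dynamics of $f^{\circ-1}$.

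The delicate step, which I expect to be the main obstacle, is the $1$-Gevrey bound. Denoting by $\widetilde{H}^{[N]}$ the degree-$N$ truncation of the formal solution and setting $\rho_N:=\widetilde{H}^{[N]}\circ f-\widetilde{H}^{[N]}-\widetilde{g}=O(z^{N+1})$, the difference $R_N:=\widetilde{H}_+-\widetilde{H}^{[N]}$ satisfies $R_N\circ f-R_N=-\rho_N$ and telescopes to $R_N(z)=\sum_{n\geq0}\rho_N\bigl(f^{\circ n}(z)\bigr)$. Applying Cauchy estimates to $\rho_N$ on a fixed disc, together with $|f^{\circ n}(z)|\leq C/n$, then produces a bound $|R_N(z)|\leq AB^N N!\,|z|^{N+1}$ on compact subsectors of $V_+$, which is exactly the $1$-Gevrey condition; keeping track of the constants here is the technical heart of the argument. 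Finally, uniqueness of $H_\pm$ modulo constants follows because the difference of two sectorial solutions is $f$-invariant and analytic on the petal, and iterating $z\mapsto f^{\circ n}(z)\to0$ together with the vanishing asymptotic constant term forces it to be identically zero.
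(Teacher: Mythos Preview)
Your proof is correct and follows essentially the same route as the paper: reduce to a right-hand side in $z^2\mathbb{C}\{z\}$ via the substitution $R(z)=H(z)+\alpha_0/z-\alpha_1\log z$, construct the sectorial solutions as telescoping sums $-\sum_{n\geq 0}\delta(f^{\circ n}(z))$ and $\sum_{n\geq 1}\delta(f^{\circ(-n)}(z))$ along forward and backward orbits, and obtain uniqueness from $f$-invariance of the difference of two solutions. The paper simply cites \cite{loraypre} for the $1$-Gevrey asymptotic rather than sketching the remainder estimate as you do, but otherwise the arguments coincide.
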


\begin{proof}
The proof of existence and uniqueness of the formal solution is straightforward, solving the difference equation \eqref{hha} term by term. 
To prove the existence of sectorially analytic solutions, instead of $H$, we consider the function 
$$
R(z)=H(z)+\frac{\alpha_0}{z}-\alpha_1 \log z.
$$
By \eqref{hha}, $R$ satisfies the difference equation
\begin{equation}\label{r}
R(f(z))-R(z)=\delta(z),
\end{equation}
where $\delta\in z^2\mathbb{C}\{z\}$. Now we directly apply results from \cite[A.6]{loraypre} to find two sectorially analytic functions on petals, $R_+$ on $V_+$ and $R_-$ on $V_-$, that satisfy equation \eqref{r}. Moreover, they admit $\widehat{R}(z)=\widehat{H}(z)+\frac{\alpha_0}{z}-\alpha_1 \log z\in z\mathbb{C}[[z]]$ as the asymptotic expansion, as $z\to 0$. Let us describe shortly the idea of the proof of the existence from \cite{loraypre}.
We consider the following series on $V_+$ and $V_-$ respectively:
\begin{equation}\label{for1}
-\sum_{n\geq 0}\delta\big(f^{\circ n}(z)\big),\ z\in V_+,
\end{equation}
and
\begin{equation}\label{for2}
\sum_{n\geq 1}\delta\big(f^{\circ (-n)}(z)\big),\ z\in V_-.
\end{equation}
It can be proven that the above series converge uniformly on all compact subsets of $V_+$, $V_-$ respectively. Then, by Weierstrass theorem, they converge to analytic functions on petals, which we denote $R_+$ on $V_+$ and $R_-$ on $V_-$:
\begin{align}\label{for}
R_+(z)&=-\sum_{n\geq 0}\delta\big(f^{\circ n}(z)\big),\ z\in V_+,\nonumber\\
R_-(z)&=\sum_{n\geq 1}\delta\big(f^{\circ (-n)}(z)\big),\ z\in V_-.
\end{align}
It can be shown furthermore that both $R_+$ and $R_-$ admit $\widehat{R}$ as their $1$-Gevrey asymptotic expansion on sectors, as $z\to 0$.

The uniqueness of sectorial analytic solutions $R_+$ and $R_-$ on $V_+$ and $V_-$ respectively with the asymptotic expansion $\widehat{R}$ is easy to prove. Iterating equation \eqref{r} along the orbit of $f$, summing the iterations and passing to the limit, it is obvious that any analytic solutions of the type $O(z)$ of \eqref{r} on $V_+$ is necessarily given by the same convergent series \eqref{for1}, and is thus unique. The same can be concluded for $V_-$ and formula \eqref{for2}. 

Finally, the solutions of initial equation \eqref{hha} are given by
\begin{align*}
H_\pm(z)=R_\pm(z)-\frac{\alpha_0}{z}+\alpha_1 \log z \text{ on $V_\pm$},
\end{align*}
where $R_{\pm}$ are as in \eqref{for}.\ On each petal we choose the appropriate branch of logarithm. Using results for $R_\pm$, the analyticity and uniqueness results for $H_\pm$ on $V_\pm$ respectively easily follow. 
\end{proof}

\medskip
We now prove Theorem~\ref{glo} from Section~\ref{one}, about the existence of a global analytic solution $H$ of the cohomological equation \eqref{hha}. In the proof, we need the following proposition, which can be proven easily. Note that the assumptions on the existence of formal expansions are crucial for the implication to hold.
\begin{proposition}\label{seriesana}
Let $\widehat{g}\in\mathbb{C}[[z]]$, and $h\in\mathbb{C}\{z\}$ non-constant. Let $\widehat T\in\mathbb{C}[[z]]$, such that
\begin{equation*}\widehat{T}=h\circ\widehat{g}.\end{equation*} Then $\widehat{T}$ is analytic if and only if $\widehat{g}$ is analytic. 
\end{proposition}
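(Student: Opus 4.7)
My plan is to treat the two implications separately. The forward direction, $\widehat{g}$ analytic $\Rightarrow \widehat{T}=h\circ\widehat{g}$ analytic, is immediate from the fact that the composition of two convergent power series is convergent (which is applicable because the formal composability of $h\circ\widehat{g}$ forces $\widehat{g}(0)$ to lie in the domain of convergence of $h$).

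For the converse, I would reduce to the local normal form of the germ $h$ at the point $a:=\widehat{g}(0)$. Writing $h(w)-h(a)=(w-a)^{k}u(w)$, where $u$ is analytic near $a$ with $u(a)\neq 0$ and $k:=\operatorname{ord}_{w=a}(h-h(a))\geq 1$ is finite (since $h$ is non-constant), the identity $\widehat{T}=h\circ\widehat{g}$ becomes the formal relation
$$\widehat{T}(z)-h(a)=\bigl(\widehat{g}(z)-a\bigr)^{k}\,u\bigl(\widehat{g}(z)\bigr).$$
If $k=1$, then $h$ is a local biholomorphism near $a$ and $\widehat{g}=h^{-1}\circ\widehat{T}$ is immediately analytic. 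If $k\geq 2$, I would introduce the analytic change of coordinate $W(w):=(w-a)\,u(w)^{1/k}$ for any fixed analytic branch of the $k$-th root (available since $u(a)\neq 0$). Then $W$ is a local biholomorphism at $a$ with $W(a)=0$ and $h(w)-h(a)=W(w)^{k}$, so the relation above rewrites as
$$W\bigl(\widehat{g}(z)\bigr)^{k}=\widehat{T}(z)-h(a).$$

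The heart of the argument is then the following elementary claim, which I expect to be the main point: \emph{if $F\in\mathbb{C}[[z]]$ satisfies $F^{k}\in\mathbb{C}\{z\}$, then $F\in\mathbb{C}\{z\}$.} I would prove it by factoring $F(z)=z^{m}G(z)$ with $G(0)\neq 0$; then $G^{k}=F^{k}/z^{mk}$ is an analytic unit at $0$, hence admits an analytic $k$-th root. Since any two formal $k$-th roots of a unit in $\mathbb{C}[[z]]$ differ only by a constant $k$-th root of unity, $G$ coincides with an analytic $k$-th root up to such a constant and is therefore analytic; so is $F$. Applied with $F=W(\widehat{g}(z))$, the claim yields $F$ analytic, and inverting the biholomorphism $W$ delivers $\widehat{g}=W^{-1}\circ F$ analytic, as desired.

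The main obstacle, as I see it, lies in keeping formal and analytic information cleanly separated in the branched case $k\geq 2$: the series $W(\widehat{g}(z))$ is only known formally a priori, so one cannot extract its $k$-th root by an analytic inversion of $(\cdot)^{k}$ directly. Isolating the algebraic claim above, which extracts the $k$-th root purely formally and only then invokes convergence, is what I expect to make the argument run cleanly.
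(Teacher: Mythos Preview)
Your argument is correct. The paper does not actually prove this proposition: it merely states that it ``can be proven easily'' and remarks that the assumption that $\widehat g$ and $\widehat T$ are genuine formal power series is essential. Your route via the local normal form $h(w)-h(a)=W(w)^k$ and the algebraic claim that $F\in\mathbb{C}[[z]]$ with $F^k\in\mathbb{C}\{z\}$ forces $F\in\mathbb{C}\{z\}$ is exactly the kind of short argument the paper is gesturing at; in particular, your factorization $F=z^mG$ with $G(0)\neq 0$ is precisely the place where the hypothesis $\widehat g\in\mathbb{C}[[z]]$ (hence $F=W\circ\widehat g\in\mathbb{C}[[z]]$) is used, which matches the paper's parenthetical remark about why the formal-series hypothesis matters.
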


\noindent \emph{Proof of Theorem~\ref{glo}}. We consider two cases separately.

$i)$ $\mathbf{l\geq 2}.$ It is easy to check that the formal solution $\widehat{H}\in z\mathbb{C}[[z]]$ is of the form
$$
\widehat{H}(z)=\frac{\alpha_l}{l-1}z^{l-1}+o(z^{l-1}).
$$
Equivalently, we can write
$$
\widehat{H}(z)=\frac{\alpha_l}{l-1}\widehat{\varphi}(z)^{l-1},
$$
where $\widehat{\varphi}$ is a formal series of the form $z+z^2\mathbb{C}[[z]]$. By Proposition~\ref{seriesana}, $H$ is globally analytic if and only if $\varphi$ is globally analytic. 

Suppose now that $H$ is globally analytic. Putting $H(z)=\frac{\alpha_l}{l-1}\varphi(z)^{l-1}$ in equation \eqref{hha}, we can uniquely express $f$: 
\begin{equation}\label{f1}
f(z)=\varphi^{-1}\left(\Big(\varphi(z)^{l-1}+\frac{l-1}{\alpha_l}g(z)\Big)^{\frac{1}{l-1}}\right).
\end{equation}
Here, $\varphi(z)^{l-1}\sim z^{l-1}$ and $g(z)\sim \alpha_l z^l$, as $z\to 0$. The $(l-1)$-th root we take is uniquely determined, since $f$ and $\varphi$ are tangent to the identity. Formula \eqref{f1} easily transforms to \eqref{f2}.

Conversely, if $f$ is of the form \eqref{f2} for $\varphi\in z+z^2\mathbb{C}\{z\}$, it is easy to see that $H(z)=\frac{\alpha_l}{l-1}\varphi(z)^{l-1}$ satisfies equation \eqref{hha} for $f$ and that the formal expansion is of the form \eqref{form}. By uniqueness in Proposition~\ref{formal}, $H$ is the unique analytic solution of \eqref{hha}.
\smallskip 

$ii)$ $\mathbf{l=0,\ 1}$. Similarly as above, the formal solution $\widehat H$ can be written in the form $$\widehat{H}(z)=h_{\alpha_0,\alpha_1}\circ \widehat{\varphi}(z),$$ where $\widehat{\varphi}\in z+z^2\mathbb{C}[[z]]$. It is easy to see that $\widehat{H}$ can be written as 
$$
\widehat{H}(z)=-\frac{\alpha_0}{z}+\alpha_1 \log z+g\left(\frac{\widehat\varphi (z)-z}{z}\right),
$$ 
where $g$ is a nonconstant analytic germ. Now, by Proposition~\ref{seriesana}, $\widehat{H}$ is \emph{globally analytic} (in the sense $\widehat{H}(z)+\frac{\alpha_0}{z}-\alpha_1 \log z$ is globally analytic) if and only if $\widehat\varphi$ is. We can proceed as in $i)$. 
The function $h_{\alpha_0,\alpha_1}$ in expression \eqref{f2} can be regarded as function with two branches (similarly as logarithmic function). In the case $\alpha_0\neq 0$, it can be regarded as global Fatou coordinate for the time 1-map of the vector field $X_{1,\lambda}$, $\lambda=2\pi i \frac{\alpha_1}{\alpha_0}$, see e.g.\cite{loray}, in the case $\alpha_0=0$, it is merely a logarithmic function. It is then invertible on sectors.  
$\hfill\Box$

\section{Analytic properties of $A^\mathbb{C}(z,\varepsilon)$ in $\varepsilon>0$ and $z\in V_+$}\label{three}

Let us recall the expansion \eqref{asy} in $\varepsilon$ of the directed area of the $\varepsilon$-neighborhood of the orbit $S^f(z)$, $z\in V_+$, for a germ $f$. The formal class of $f$ can be read from the first $k+1$ coefficients which do not depend on the initial point of the orbit, see \cite{resman}. To get some insight about the analytic class, we analyse in following Subsections~\ref{threeone} and \ref{threetwo} the \emph{analytic properties of $A^\mathbb{C}(z,\varepsilon)$ in both parameter $\varepsilon>0$ and variable $z\in V_+$}. We first state its \emph{bad} properties (nonexistence of the full asymptotic expansion and accumulation of singularities in $\varepsilon$ for fixed $z$, nonanalyticity in $z$ for fixed $\varepsilon$). The same can be concluded for orbits of $f^{\circ -1}$ on $V_-$. This explains why we study principal parts of directed areas, $z\mapsto H^f(z)$ and $z\mapsto H^{f^{\circ -1}}(z)$, as the only parts of areas with satisfactory analytic properties.

\subsection{Properties of $\varepsilon\mapsto A^\mathbb{C}(z,\varepsilon)$, $\varepsilon>0$}\label{threeone}\ 

Let $z\in V_+$ be fixed. Let $\varepsilon\mapsto A^\mathbb{C}(z,\varepsilon)$ denote the directed area of the $\varepsilon$-neighborhood of the orbit $S^f(z)$, as function of $\varepsilon\in (0,\varepsilon_0)$.

Proposition~\ref{nonasy} states that the remainder term $R(z,\varepsilon)$ in expansion \eqref{asy} does not have expansion in $\varepsilon$ in a power-logarithm scale after a certain number of terms. This presents an obstacle for extending the function from the positive real line to complex $\varepsilon$, by means of formal series. The proof is in the Appendix.
\begin{proposition}[Nonexistence of full power-logarithmic asymptotic expansion in $\varepsilon$, as $\varepsilon\to 0$]\label{nonasy}
Let $z\in V_+$ be fixed. A full asymptotic expansion of $A^\mathbb{C}(z,\varepsilon)$ in a power-logarithmic scale, as $\varepsilon\to 0$, does not exist. That is, there exists $l\in\mathbb{N}$, such that the remainder term $R(z,\varepsilon)$ in \eqref{asy} is of the form:
\begin{align*}
R(z,\varepsilon)=h_1(z)g_1(\varepsilon)+\ldots+h_{l-1}(\varepsilon) g_{l-1}(\varepsilon)+h(z,\varepsilon),\ h(z,\varepsilon)=O\big(g_l(\varepsilon)\big),\\
\hspace{7cm} \varepsilon\to 0.
\end{align*}
The monomials $g_i(\varepsilon)$ are of power-logarithmic type in $\varepsilon$, of increasing flatness at zero, but the limit
\begin{equation*}
\lim_{\varepsilon\to 0}\frac{h(z,\varepsilon)}{g_l(\varepsilon)}
\end{equation*}
does not exist.
\end{proposition}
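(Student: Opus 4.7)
My plan is to trace the obstruction back to the \emph{discrete} nature of the orbit $S^f(z)=\{z_n\}_{n\geq 0}$. Recall $z_n\sim(-ka_1)^{-1/k}n^{-1/k}$ and $|z_{n+1}-z_n|\sim c\,n^{-(k+1)/k}$. For each small $\varepsilon>0$ there is a largest integer $n_\varepsilon$ such that the disks $B(z_n,\varepsilon)$ with $n<n_\varepsilon$ are pairwise disjoint; for $n\geq n_\varepsilon$ the consecutive disks overlap and merge into a connected tail $T_\varepsilon$. Inverting the asymptotic relation $|z_{n_\varepsilon}-z_{n_\varepsilon-1}|=2\varepsilon+o(\varepsilon)$ gives $n_\varepsilon=\lfloor F(\varepsilon)\rfloor$, where $F$ is real-analytic in $\varepsilon^{1/(k+1)}$ with $F(\varepsilon)\sim c'\,\varepsilon^{-k/(k+1)}$ as $\varepsilon\to 0^+$.

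I would then split the directed area as
\[
A^\mathbb{C}(z,\varepsilon)=\pi\varepsilon^2\sum_{n=0}^{n_\varepsilon-1}z_n\;+\;A^\mathbb{C}_{T_\varepsilon}(z,\varepsilon),
\]
and expand each part. The tail $A^\mathbb{C}_{T_\varepsilon}$ admits a full power-logarithmic expansion in $\varepsilon$ via the integral representation from \cite{resman}: this produces only smooth monomials $g_i(\varepsilon)$. The disk-sum, however, is handled by Euler--Maclaurin applied to the known expansion of $z_n$: replacing $n_\varepsilon$ by $F(\varepsilon)$ yields a smooth power-log series, but the discrete endpoint leaves an arithmetic boundary term proportional to $\{F(\varepsilon)\}\,z_{n_\varepsilon}\sim C\{F(\varepsilon)\}\,\varepsilon^{1/(k+1)}$, together with strictly lower-order oscillating corrections from Bernoulli polynomials evaluated at $\{F(\varepsilon)\}$. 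Because the lower endpoint of the tail is that same index $n_\varepsilon$, $A^\mathbb{C}_{T_\varepsilon}$ also carries fractional-part boundary corrections; at the orders already recorded in \eqref{asy} these cancel the disk-sum oscillations, which is precisely what makes the coefficients $q_i$, $H^f(z)$, $q_{k+1}$ well-defined numbers.

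The technical core of the argument, and the place where I expect the main obstacle, is to show that the cancellation is necessarily finite. Only power-log monomials are available on the tail side, so at some smallest monomial $g_l(\varepsilon)$ the arithmetic remainder from the disk sum must survive. At this order one finds $h(z,\varepsilon)/g_l(\varepsilon)=C(z)\,\{F(\varepsilon)\}+o(1)$ with $C(z)\neq 0$. Since $F(\varepsilon)\to+\infty$ continuously as $\varepsilon\to 0^+$ at the non-integer rate $\varepsilon^{-k/(k+1)}$, the fractional part $\{F(\varepsilon)\}$ attains every value of $[0,1)$ infinitely often, so the required limit cannot exist, proving the proposition. The hard part is pinning down the first surviving order: it requires a uniform Euler--Maclaurin treatment of the tail integral with the discrete endpoint $n_\varepsilon$, matching it term by term against the disk-sum expansion until the first unmatched fractional-part term appears. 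Once that is located, the arithmetic of the fractional part closes the argument.
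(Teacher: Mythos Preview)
Your intuition is correct---the obstruction is the integer-valued separating index $n_\varepsilon$---but your execution is more complicated than the paper's and has a gap at the crucial step. (Minor note: you have the terminology reversed; the paper calls the disjoint-disc part the \emph{tail} and the overlapping part the \emph{nucleus}.)

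The paper's argument avoids all the Euler--Maclaurin bookkeeping. It works directly with $n_\varepsilon$: writing $n_\varepsilon=p_1\varepsilon^{-1+1/(k+1)}+\cdots+p_{k+1}\log\varepsilon+r(z,\varepsilon)$ with $r=O(1)$, it supposes $\lim_{\varepsilon\to 0}r(z,\varepsilon)=C(z)$ exists. At the separating radii $\varepsilon_n$ one has $n_\varepsilon(\varepsilon_n-)=n+1$ and $n_\varepsilon(\varepsilon_n+)=n$; since the $(k+1)$-jet of the expansion is continuous, subtracting it from both one-sided values forces the jump $1=r(\varepsilon_n-)-r(\varepsilon_n+)=o(1)$ as $n\to\infty$, a contradiction. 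Thus $n_\varepsilon$ itself admits no further expansion, and the paper then defers to the explicit formulas of Lemmas~4 and~5 in \cite{resman} to propagate the non-expandability to $A^{\mathbb{C}}(z,\varepsilon)$. Your fractional-part function $\{F(\varepsilon)\}$ is this same jump phenomenon repackaged, but the jump-at-$\varepsilon_n$ formulation delivers the contradiction in two lines without ever identifying the precise surviving order.

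The gap in your plan is exactly the step you flag as ``the hard part.'' You argue the cancellation between nucleus and disc-sum oscillations must terminate because ``only power-log monomials are available on the tail side,'' yet two sentences earlier you (correctly) note that the nucleus also carries fractional-part boundary corrections through the same endpoint $n_\varepsilon$. These two assertions are in tension, and without an independent mechanism explaining why the two families of $\{F(\varepsilon)\}$-corrections eventually stop matching, the argument does not close. The paper sidesteps this entirely: it never attempts to locate the first surviving fractional-part term in $A^{\mathbb{C}}$, but instead establishes non-expandability of the integer $n_\varepsilon$ first and then lets the structure of the area formulas carry the conclusion.
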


Proposition~\ref{accu} expresses an obstacle for the analytic continuation of $A^\mathbb{C}(z,\varepsilon)$ in $\varepsilon$ on the neighborhood of the positive real line. The proof is in the Appendix. 

Let us denote by $z_n=f^{\circ n}(z)$, $n\in\mathbb{N}_0$, the points of the orbit. Let
$d_n=|z_n-z_{n+1}|,\ n\in\mathbb{N}_0$, denote the distances between consecutive points of the orbit and let
\begin{equation*}
\varepsilon_n=\frac{d_n}{2},\ n\in\mathbb{N}_0.
\end{equation*}
Note that $\varepsilon_n\to 0$, as $n\to\infty$.

The loss of regularity of $\varepsilon\mapsto A^\mathbb{C}(z,\varepsilon)$ at points $\varepsilon_n$ at which separation of the tail and the nucleus occurs is related to the different rate of growth of the tail and of the nucleus of $\varepsilon$-neighborhoods in $\varepsilon$, due to their different geometry (overlapping discs in nucleus, disjoint discs in tail).
\begin{proposition}[Accumulation of singularities at $\varepsilon=0$]\label{accu}
Let $\varepsilon_0>0$. The function $\varepsilon\mapsto A^\mathbb{C}(z,\varepsilon)$ is of class $C^1$ on $(0,\varepsilon_0)$ and $C^\infty$ on open subintervals $(\varepsilon_{n+1},\varepsilon_n)$, $n\in\mathbb{N}_0$. However, in $\varepsilon_n$, $n\in\mathbb{N}_0$, the second derivative is unbounded from the right:
$$
\lim_{\varepsilon\to\varepsilon_n-}\frac{d^2}{d\varepsilon^2}A^\mathbb{C}(z,\varepsilon)\in\mathbb{C},\ \lim_{\varepsilon\to\varepsilon_n+}\left|\frac{d^2}{d\varepsilon^2}A^\mathbb{C}(z,\varepsilon)\right|=+\infty.
$$
\end{proposition}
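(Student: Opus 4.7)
The plan is to isolate, near each $\varepsilon_n$, the single geometric event responsible for the loss of smoothness: the tangent discs $B(z_n,\varepsilon)$ and $B(z_{n+1},\varepsilon)$ start overlapping at $\varepsilon=\varepsilon_n$. I will show that this event contributes a term of the form $c_n(\varepsilon-\varepsilon_n)^{3/2}$ with $c_n\neq 0$ appearing only from the right, while all other contributions extend real-analytically across $\varepsilon_n$. Since $(\varepsilon-\varepsilon_n)^{3/2}$ is $C^{1}$ but has a $(\varepsilon-\varepsilon_n)^{-1/2}$ second derivative, this gives exactly the announced regularity pattern.

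First I would set up the decomposition. Because $d_n$ is strictly decreasing, for $\varepsilon\in(\varepsilon_{n+1},\varepsilon_n)$ the first $n+1$ discs $B(z_0,\varepsilon),\dots,B(z_n,\varepsilon)$ are pairwise disjoint, so
$$
A^{\mathbb C}(z,\varepsilon)=\pi\varepsilon^{2}\sum_{k=0}^{n}z_k+M_n(\varepsilon),\qquad M_n(\varepsilon)=\int_{\bigcup_{k\ge n+1}B(z_k,\varepsilon)}\!\!w\,dA(w).
$$
On any subinterval of $(\varepsilon_{n+1},\varepsilon_n)$ where the combinatorial type of the overlap pattern in the nucleus is fixed, inclusion--exclusion writes $M_n(\varepsilon)$ as a finite sum of integrals over circular polygons whose arcs depend real-analytically on $\varepsilon$, yielding real-analyticity (hence $C^{\infty}$) there.

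The heart of the argument is the analysis at $\varepsilon_n$ itself. For $\varepsilon$ slightly larger than $\varepsilon_n$, the only new overlap involving a tail disc is $B(z_n,\varepsilon)\cap B(z_{n+1},\varepsilon)$: for $j\ge 2$ one has $|z_n-z_{n+j}|/2>\varepsilon_n$ by the geometry of the parabolic orbit, so all such events happen at strictly larger $\varepsilon$. Writing $A^{\mathbb C}_{\mathrm{an}}$ for the real-analytic continuation to a full two-sided neighborhood of $\varepsilon_n$ of the expression valid on the left, inclusion--exclusion at first order yields
$$
A^{\mathbb C}(z,\varepsilon)-A^{\mathbb C}_{\mathrm{an}}(z,\varepsilon)=-\int_{B(z_n,\varepsilon)\cap B(z_{n+1},\varepsilon)}w\,dA(w),\qquad \varepsilon_n<\varepsilon<\varepsilon_n+\eta.
$$
A Taylor expansion in the variable $u=\arccos(\varepsilon_n/\varepsilon)\to 0^{+}$ of the classical lens-area formula $2\varepsilon^{2}\arccos(\varepsilon_n/\varepsilon)-\varepsilon_n\sqrt{4\varepsilon^{2}-4\varepsilon_n^{2}}$ gives an expansion beginning with $\tfrac{8\sqrt{2}}{3}\varepsilon_n^{1/2}(\varepsilon-\varepsilon_n)^{3/2}$, and the symmetry of the lens about the perpendicular bisector of $[z_n,z_{n+1}]$ makes the moment integral equal to $\tfrac{z_n+z_{n+1}}{2}$ times the lens area, up to terms of order $(\varepsilon-\varepsilon_n)^{5/2}$. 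Since $z_n$ accumulates at $0$ along an attracting direction, $z_n+z_{n+1}=2z_n+O(z_n^2)\neq 0$, so the resulting $c_n$ is nonzero.

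Putting everything together, in a neighborhood of $\varepsilon_n$ one has $A^{\mathbb C}(z,\varepsilon)=A^{\mathbb C}_{\mathrm{an}}(z,\varepsilon)$ for $\varepsilon<\varepsilon_n$, and $A^{\mathbb C}(z,\varepsilon)=A^{\mathbb C}_{\mathrm{an}}(z,\varepsilon)+c_n(\varepsilon-\varepsilon_n)^{3/2}+O((\varepsilon-\varepsilon_n)^{5/2})$ for $\varepsilon>\varepsilon_n$. Differentiating once adds $\tfrac{3}{2}c_n(\varepsilon-\varepsilon_n)^{1/2}\to 0$ on the right, matching the left derivative of $A^{\mathbb C}_{\mathrm{an}}$ and giving $C^{1}$ regularity across $\varepsilon_n$; differentiating twice adds $\tfrac{3}{4}c_n(\varepsilon-\varepsilon_n)^{-1/2}$ on the right, whose modulus tends to $+\infty$, while the left-sided limit of the second derivative remains the finite value $\partial_\varepsilon^2 A^{\mathbb C}_{\mathrm{an}}(z,\varepsilon_n)$. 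The hard part will be the inclusion--exclusion bookkeeping in $M_n$: one has to verify that no secondary event in the nucleus coincides with $\varepsilon_n$ and that the higher-order lens corrections do not cancel the leading $c_n(\varepsilon-\varepsilon_n)^{3/2}$. This rests on the asymptotic geometry of the parabolic orbit, via which one separates the adjacent-disc critical thresholds $\varepsilon_n$ from all other thresholds $|z_k-z_l|/2$ with $|k-l|\ge 2$.
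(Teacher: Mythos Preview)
Your approach is essentially the same as the paper's: isolate the single new overlap $B(z_n,\varepsilon)\cap B(z_{n+1},\varepsilon)$ as the source of the singularity at $\varepsilon_n$, check that everything else is smooth across $\varepsilon_n$, and use $z_n+z_{n+1}\neq 0$ to ensure the singular term does not vanish. The paper packages this via the explicit crescent formula $\frac{\varepsilon_n}{\varepsilon}\sqrt{1-\varepsilon_n^2/\varepsilon^2}+\arcsin(\varepsilon_n/\varepsilon)$ and differentiates it twice directly, while you Taylor-expand the lens moment as $c_n(\varepsilon-\varepsilon_n)^{3/2}$; these are two ways of saying the same thing, and your $\tfrac{8\sqrt 2}{3}\varepsilon_n^{1/2}$ is correct. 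Note that by the double reflection symmetry of the lens its centroid is \emph{exactly} $(z_n+z_{n+1})/2$, so the moment equals $(z_n+z_{n+1})/2$ times the lens area with no $(\varepsilon-\varepsilon_n)^{5/2}$ correction needed.

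Two remarks on the parts you flag as delicate. First, your worry about thresholds $|z_k-z_l|/2$ with $|k-l|\ge 2$ is unnecessary: for a parabolic orbit and $\varepsilon$ below a fixed threshold only \emph{consecutive} discs overlap, so the nucleus decomposes into crescents and the only critical values are the $\varepsilon_k$. The paper uses this from the outset (via the crescent formula from \cite{resman}) and never needs general inclusion--exclusion. Second, your claim that the left-hand expression extends real-analytically across $\varepsilon_n$ is exactly the content of the paper's auxiliary Proposition~\ref{auxi}, which asserts that the infinite sum $G_{n+1}(\varepsilon)=\frac{1}{\pi}\sum_{l\ge n+1}\big(\frac{\varepsilon_l}{\varepsilon}\sqrt{1-\varepsilon_l^2/\varepsilon^2}+\arcsin\frac{\varepsilon_l}{\varepsilon}\big)(z_l+z_{l+1})$ is $C^\infty$ on a full neighborhood of $\varepsilon_n$ with termwise differentiation; you should state and prove this convergence estimate rather than assert analytic continuation.
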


\medskip

\subsection{Properties of $z\mapsto A^\mathbb{C}(z,\varepsilon)$, $z\in V_+$}\label{threetwo}\

Let $\varepsilon>0$ be fixed. The following proposition states that sectorial analyticity cannot be obtained directly considering function $z\mapsto A^\mathbb{C}(z,\varepsilon),\ z\in V_+$, for a fixed $\varepsilon>0$ (similarly on $V_-$). The proof is in the Appendix.

Let $S^{\pm}(\varphi,r),\ \varphi\in(0,\pi),\ r>0,$ denote the (symmetric) sectors of opening $2\varphi$ and radius $r>0$ around any attracting, respectively repelling, direction. 
\begin{proposition}\label{no}
Let $\varepsilon>0$. The function $z\mapsto A^\mathbb{C}(z,\varepsilon)$ is not analytic on any attracting petal $V_+$. The function $z\mapsto A^{\mathbb{C},f^{\circ -1}}(z,\varepsilon)$ is not analytic on any repelling petal $V_-$.
\end{proposition}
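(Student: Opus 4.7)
\emph{Plan.} The strategy is to identify an explicit antiholomorphic dependence of $A^\mathbb{C}(z,\varepsilon)$ on $z$ coming from the Euclidean geometry of the $\varepsilon$-neighborhood, and to show it cannot cancel. Starting from
\[
A^\mathbb{C}(z,\varepsilon) \;=\; \int_{S^f(z)_\varepsilon} \zeta\, dA(\zeta),
\]
I would apply inclusion--exclusion over the closed discs $D_n := \overline{D}(z_n,\varepsilon)$, with $z_n := f^{\circ n}(z)$:
\[
A^\mathbb{C}(z,\varepsilon) \;=\; \sum_n \int_{D_n}\zeta\,dA \;-\; \sum_{n<m}\int_{D_n \cap D_m}\zeta\,dA \;+\; \sum_{n<m<\ell}\int_{D_n \cap D_m \cap D_\ell}\zeta\,dA \;-\; \cdots.
\]
The single-disc sum equals $\pi\varepsilon^2\sum_n z_n$, which is holomorphic in $z$ as a convergent series of holomorphic terms (convergence is assured by orbit accumulation at $0$). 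Any obstruction to analyticity must therefore come from the intersection corrections.

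For an overlapping pair $(n,m)$ with $d_{nm}:=|z_n-z_m|<2\varepsilon$, the symmetry of the lens $D_n\cap D_m$ about the segment $[z_n,z_m]$ places its centroid at $(z_n+z_m)/2$, so
\[
\int_{D_n\cap D_m}\zeta\,dA \;=\; L\bigl(d_{nm}\bigr)\cdot\tfrac{z_n+z_m}{2}, \qquad L(d):=2\varepsilon^2\arccos\!\tfrac{d}{2\varepsilon}-\tfrac{d}{2}\sqrt{4\varepsilon^2-d^2}.
\]
The midpoint factor is holomorphic in $z$, but $L(d_{nm})$ is not: since $z_n(z)-z_m(z)$ is holomorphic, Wirtinger calculus gives $\partial_{\bar z}d_{nm}=(z_n-z_m)\overline{\partial_z(z_n-z_m)}/(2d_{nm})$, which is generically nonzero, while $L'(d)=-\sqrt{4\varepsilon^2-d^2}\neq 0$ for $d\in(0,2\varepsilon)$. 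Analogous, somewhat more involved, non-holomorphic expressions govern higher-order intersections, each multi-intersection being a circular polygon whose area and moment are determined by the pairwise Euclidean distances of its vertices.

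I would then pick $z^*\in V_+$ in a \emph{stable combinatorial region} --- an open neighborhood on which no pair $(n,m)$ crosses the threshold $d_{nm}=2\varepsilon$ --- and compute $\partial_{\bar z}A^\mathbb{C}(z^*,\varepsilon)$ as a convergent sum of the explicit non-holomorphic contributions from the overlapping pairs and higher-order intersections. The \textbf{main difficulty} is excluding a miraculous cancellation among pair, triple, and higher-order contributions. My preferred route is to perturb $\varepsilon$ in a small interval on which the combinatorial type at $z^*$ stays constant: the $k$-fold intersection areas depend on $\varepsilon$ through algebraically independent radicals $\sqrt{4\varepsilon^2-d_{nm}^2}$ and their higher analogs, so a universal cancellation in $\varepsilon$ would force each pair term to vanish separately --- impossible, since $L'(d_{nm})\neq 0$. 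This yields $\partial_{\bar z}A^\mathbb{C}(z^*,\varepsilon)\neq 0$ for generic $\varepsilon$, contradicting analyticity on $V_+$. The identical argument, applied to orbits of $f^{\circ -1}$ in $V_-$, gives the corresponding statement for $A^{\mathbb{C},f^{\circ -1}}$.
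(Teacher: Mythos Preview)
Your proposal has a fatal gap at the very first step. The ``single-disc sum'' $\pi\varepsilon^2\sum_{n\geq 0} z_n$ \emph{diverges}: since $z_n=f^{\circ n}(z)\sim C\,n^{-1/k}$ as $n\to\infty$ (see the expansion quoted just after Definition~\ref{dirarea}), the series $\sum_n z_n$ behaves like $\sum_n n^{-1/k}$, which diverges for every $k\geq 1$. Your claim that ``convergence is assured by orbit accumulation at $0$'' is simply false --- accumulation at $0$ gives $z_n\to 0$, not summability. Consequently the inclusion--exclusion decomposition is not term-by-term meaningful, and the strategy of separating a holomorphic first sum from non-holomorphic corrections collapses before it begins. (The same divergence afflicts the pair-intersection sum: infinitely many consecutive discs overlap near the origin, and the lens areas do not decay fast enough either.)

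Even if one could regularize the sums, your resolution of the acknowledged ``main difficulty'' is not a proof. The appeal to ``algebraically independent radicals'' is a heuristic, not an argument; and perturbing $\varepsilon$ on an interval of fixed combinatorial type does not by itself decouple the contributions of different orders, since all of them depend on $\varepsilon$ through the same family of distances $d_{nm}$.

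The paper's proof bypasses both problems with one clean trick: instead of decomposing $A^\mathbb{C}(z,\varepsilon)$, it looks at the difference
\[
T(z)\;=\;A^\mathbb{C}(z,\varepsilon)-A^\mathbb{C}(f(z),\varepsilon).
\]
On the open set $U_\varepsilon=\{z\in V_+:\ |z-f(z)|<2\varepsilon\}$ the two $\varepsilon$-neighborhoods differ by a \emph{single} crescent, so
\[
T(z)\;=\;-\tfrac{\pi}{2}\varepsilon^2\bigl(f(z)-z\bigr)+\varepsilon^2\bigl(z+f(z)\bigr)\cdot G\!\left(\frac{|z-f(z)|}{2\varepsilon}\right),\qquad G(t)=t\sqrt{1-t^2}+\arcsin t.
\]
Since $f'\not\equiv 1$ near $0$ and the modulus $|z-f(z)|$ is nowhere analytic, $T$ is nowhere analytic on $U_\varepsilon$. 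But if $A^\mathbb{C}(\cdot,\varepsilon)$ were analytic on any small sector $S_+\subset V_+$, then $T$ would be analytic there too (as $f$ is analytic and $f(S_+)\subset S_+$), contradicting $S_+\cap U_\varepsilon\neq\emptyset$. The differencing cancels the entire infinite tail in one stroke and isolates exactly one explicit non-holomorphic term --- no divergent sums, no cancellation worries. This is the idea you are missing.
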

Moreover, we show in the proof that $z\mapsto A^\mathbb{C}(z,\varepsilon)$ is not analytic on any open sector $S^+(\varphi,r)\subset V_+,\ r>0,\ \varphi\in(0,\pi)$. Similarly, $z\mapsto A^{\mathbb{C},f^{\circ -1}}(z, \varepsilon)$ is not analytic on any open sector $S^-(\varphi,r)\subset V_-,\ r>0,\ \varphi\in(0,\pi)$. For the proof of Proposition~\ref{no}, see Appendix.

\subsection{Properties of the principal parts of areas}\label{threethree}\

Having described \emph{bad} properties of the directed areas of orbits, we concentrate on their principal parts $z\mapsto H^{f}(z),\ z\in V_+,$ and $z\mapsto H^{f^{\circ -1}}(z),\ z\in V_-$, see Section~\ref{one}. We prove here Theorem~\ref{ppart} about sectorial analyticity of principal parts. 

Before proceeding to the proof, let us note that the relation with the cohomological equation \eqref{ha} in Theorem~\ref{ppart} is inspired by the following Proposition~\ref{eqprin} that follows from the geometry of $\varepsilon$-neighborhoods.
\begin{proposition}\label{eqprin}
The principal parts of areas $H^f$ and $H^{f^{\circ -1}}$ satisfy the following difference equations:
\begin{align}
H^f(f(z))-H^f(z)=-\pi z,\ &z\in V_+,\label{h1}\\
H^{f^{\circ -1}}(f^{\circ -1}(z))-H^{f^{\circ -1}}(z)=-\pi z,\ &z\in V_-.\label{h2}
\end{align}
Here, $V_+$ denotes any attracting and $V_-$ any repelling petal.
\end{proposition}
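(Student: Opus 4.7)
The plan is to exploit the decomposition of the orbit $S^f(z)$ as the orbit $S^f(f(z))$ together with one extra point, namely $z$ itself, and then to compare the resulting identity for the directed areas term by term against the asymptotic expansion~\eqref{asy}. The same geometric picture, applied to $f^{\circ -1}$ on $V_-$, will yield the second identity.

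First I would verify that for $z\in V_+$ fixed the iterates $f^{\circ n}(z)$ accumulate only at $0$ along the petal, so that $|z-f^{\circ n}(z)|$ is bounded below by a positive constant as $n\geq 1$ varies. This yields, for all sufficiently small $\varepsilon>0$, a disjoint decomposition
$$
S^f(z)_\varepsilon \;=\; B(z,\varepsilon)\ \sqcup\ S^f(f(z))_\varepsilon.
$$
Both area and first moment are additive on disjoint sets, and $B(z,\varepsilon)$ has area $\pi\varepsilon^2$ with center of mass $z$, so its directed area equals $\pi\varepsilon^2\cdot z$. Adding, I obtain the functional identity
$$
A^{\mathbb{C}}(z,\varepsilon) \;=\; \pi\varepsilon^2\,z \;+\; A^{\mathbb{C}}(f(z),\varepsilon),
$$
valid for all $\varepsilon$ small enough, where the threshold depends on $z$.

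Next I would insert the asymptotic expansion~\eqref{asy} into both sides of this identity. Because the coefficients $q_1,\ldots,q_{k+1}$ are independent of the initial point, they cancel identically between the two sides. The inhomogeneous term $\pi\varepsilon^2 z$ feeds into the $\varepsilon^2$ scale with coefficient $\pi z$, while the remaining contributions sit at strictly distinguishable scales ($\varepsilon^2\log\varepsilon$, $\varepsilon^{2+\frac{1}{k+1}}\log\varepsilon$, and the remainder $o(\varepsilon^2)$). By uniqueness of coefficients in a power-logarithmic asymptotic expansion, matching the $\varepsilon^2$ scale yields
$$
H^f(z)\;=\;\pi z + H^f(f(z)),
$$
which is exactly \eqref{h1}. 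Since $V_-$ is attracting for $f^{\circ -1}$ and the orbit $S^{f^{\circ -1}}(z)$ admits the analogous decomposition $\{z\}\sqcup S^{f^{\circ -1}}(f^{\circ -1}(z))$, an identical computation carried out for $f^{\circ -1}$ on $V_-$ produces \eqref{h2}.

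The only point requiring real care, rather than a genuine obstacle, is the isolation of the $\varepsilon^2$ coefficient from its neighbors in the power-logarithmic expansion. Since these scales are strictly ordered as $\varepsilon\to 0^+$ and the $z$-independent coefficients cancel identically on both sides of the functional identity, no interference occurs and the extraction is a standard uniqueness-of-asymptotic-expansion argument. The disjointness condition only constrains the range of admissible $\varepsilon$, which is harmless for reading off asymptotic information as $\varepsilon\to 0^+$.
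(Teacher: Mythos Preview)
Your proof is correct and follows essentially the same approach as the paper: both establish the functional identity $A^{\mathbb{C}}(z,\varepsilon)=A^{\mathbb{C}}(f(z),\varepsilon)+\pi z\,\varepsilon^2$ for $\varepsilon$ small (via disjointness of the extra $\varepsilon$-disc from the rest of the $\varepsilon$-neighborhood), substitute the expansion~\eqref{asy}, cancel the $z$-independent coefficients, and read off the $\varepsilon^2$ term by dividing through and letting $\varepsilon\to 0$. Your presentation is in fact slightly more explicit about why the disjointness holds and why the coefficient extraction is legitimate, but there is no substantive difference in method.
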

\begin{proof}
Let us first derive the equation \eqref{h1} for $H^f$. Let $z\in V_+$.
By the definition of the directed area, we have that
\begin{equation}\label{ac}
A^\mathbb{C}(z,\varepsilon)=A^\mathbb{C}(f(z),\varepsilon)+z\cdot \varepsilon^2\pi, \ z\in V_+,
\end{equation}
for $0<\varepsilon<\varepsilon_z$ small enough with respect to $z$.
Putting the expansion \eqref{asy} in \eqref{ac}, we get that
$$
\big[H^f(z)-H^f(f(z))\big]\varepsilon^2+\big(R(z,\varepsilon)-R(f(z),\varepsilon)\big)=\varepsilon^2\pi.
$$
By \eqref{asy}, $R(z,\varepsilon)-R(f(z),\varepsilon)=o(\varepsilon^{2+\frac{1}{k+1}})$. Dividing by $\varepsilon^2$ and passing to the limit as $\varepsilon\to 0$, \eqref{h1} follows. 

Equation \eqref{h2} is derived in the same manner, but considering directed areas of orbits of $f^{\circ -1}$ on the repelling sector $V_-$. 
\end{proof}

The idea of the proof of Theorem~\ref{ppart} is the following. We first derive the expressions for principal parts by analysing coefficients in directed areas. On the other hand, applying the iterative procedure described in Proposition~\ref{formal} to equation \eqref{ha}, we show that the sectorially analytic solutions of equation \eqref{ha} are given by almost the same limit formula as principal parts, up to computable explicit constants. 
\smallskip

\noindent \emph{Proof of Theorem~\ref{ppart}}. 

We analyse the form of the coefficient $H^f(z)$ in front of $\varepsilon^2$ in expansion \eqref{asy}, as function of $z\in V_+$, obtained geometrically. We follow the steps for obtaining the expansions for the tail and of the nucleus from \cite{resman}. Let us remind, the tail of the $\varepsilon$-neighborhood is the part of the $\varepsilon$-neighborhood which is the union of disjoint $\varepsilon$-discs, while the nucleus is the remaining part with overlapping discs. We denote by $z\mapsto H^{f}_N(z),\ z\mapsto H^{f}_T(z),\ z\in V_+,$ the principal parts in the expansions of the directed area of the nucleus and the tail respectively. The following equality holds:
\begin{align}\label{sum}
H^f(z)&=H^{f}_N(z)+H^f_T(z),\ z\in V_+.
\end{align}
Going through the proof of \cite[Lemma 4]{resman}, it can be computed that the principal part for the nucleus is constant and equal to
\begin{equation}\label{nuclei}
H^f_N(z)=-\frac{\pi}{4}(1+\log 4),\ z\in V_+.
\end{equation}
The dependence on $z$ of the principal part comes from the tail. The center of the mass of the tail of the $\varepsilon$-neighborhood of the orbit $S^f(z)$ for $\varepsilon>0$, see \cite[Lemma 5]{resman}, is equal to:
\begin{align*}
\big(A(T_\varepsilon)&\cdot t_{T_\varepsilon}\big)(z)=\varepsilon^2\pi\cdot\sum_{l=0}^{n_\varepsilon}f^{\circ l}(z).
\end{align*}
Here, $n_\varepsilon$ is the index where separation of the tail and the nucleus occurs. Obviously, $n_\varepsilon\to\infty$, as $\varepsilon\to 0$. Expanding the sum above as $n_\varepsilon\to\infty$, we get:
\begin{align}\label{r2}
\big(A(T_\varepsilon)&\cdot t_{T_\varepsilon}\big)(z)=\varepsilon^2\pi\cdot\big(-\log n_\varepsilon+C(z)+o(1)\big),\ \varepsilon\to 0.
\end{align}
Here, $C(z)=c_0\Big(\sum_{l=0}^n f^{\circ l}(z)\Big)$ denotes the constant term in in the asymptotic expansion of $\sum_{l=0}^n f^{\circ l}(z)$, as $n\to\infty$. It is a complex function in the initial point $z$. Let us explain shortly how we get the expansion, as $n\to\infty$, of the above sum $$S(n)=\sum_{l=0}^{n}f^{\circ l}(z).$$ Since $f$ is prenormalized and belongs to the formal class $(k=1,\rho=0)$, we have the formal expansion of $f^{\circ l}(z)$, as $l\to\infty$:
$$
f^{\circ l}(z)=\widehat\Psi^{-1}(l+z)=-l^{-1}+q(z)l^{-2}+o(l^{-2}).
$$
Here, $q(z)$ is complex function of the initial point and $\widehat\Psi$ is formal trivialisation function for $f$, $\widehat\Psi^{-1}\in -1/z+(1/z)^2\mathbb{C}[[1/z]]$, see e.g. \cite{loray} or any standard book on classification of parabolic germs. Putting this expansion in sum $S(n)$, by integral approximation we obtain the expansion 
\begin{equation}\label{sssum}
S(n)=-\log n +C(z)+o(1), \ n\to\infty.
\end{equation} 
For more detail, see the proof of Lemma 5 in \cite{resman}.

We conclude using \eqref{r2} and the expansion for $n_\varepsilon$ in $\varepsilon$ from \cite{resman} that the coefficient in front of $\varepsilon^2$ in expansion \eqref{r2}, as $\varepsilon\to 0$, can be expressed as
\begin{equation}\label{free}
H^f_T(z)=\frac{\pi}{2}\log 2+\pi \cdot c_0\Big(\sum_{l=0}^{n}f^{\circ l}(z)\Big),\ z\in V_+.
\end{equation}
By \eqref{sum}, \eqref{nuclei} and \eqref{free}, we get the expression for the principal part:
\begin{equation}\label{usp}
H^f(z)=-\frac{\pi}{4}+\pi\cdot c_0\Big(\sum_{l=0}^{n}f^{\circ l}(z)\Big),\ z\in V_+.
\end{equation}
Our next step is to prove analyticity of the function $H^f$ given by \eqref{usp} on $V_+$. To this end, we consider the unique analytic solution on $V_+$ without constant term of equation \eqref{ha}:
\begin{equation}\label{g}
H(f(z))-H(z)=-\pi z,
\end{equation}
see Proposition~\ref{formal}. By the proof of Proposition~\ref{formal}, it is given by the limit
\begin{equation}\label{lim2}
H_+(z)=\pi \lim_{n\to\infty}\left(\sum_{l=0}^{n}f^{\circ l}(z)-\log f^{\circ(n+1)}(z)\right),
\end{equation}
which was proven to converge pointwise to an analytic function on $V_+$.

To prove analyticity of $H^f$ on $V_+$, it suffices to show that the expression \eqref{usp} for $H^f(z)$ coincides pointwise with $H_+(z)$ in \eqref{lim2}, up to a constant. For a fixed $z$, by \eqref{sssum}, we estimate the first terms in the asymptotic expansion of $\sum_{l=0}^{n}f^{\circ l}(z)-\log f^{\circ(n+1)}(z)$, as $n\to\infty$:
\begin{align}\label{cc}
\sum_{l=0}^{n}f^{\circ l}&(z)-\log f^{\circ(n+1)}(z)=\\
&=-\log n +c_0\Big(\sum_{l=0}^n f^{\circ l}(z)\Big)+o(1)\nonumber -\log f^{\circ(n+1)}(z)=\\
&=c_0\Big(\sum_{l=0}^n f^{\circ l}(z)\Big)-i\pi+o(1).\nonumber  
\end{align}
The last equality is obtained using the expansion:
\begin{align*}
-&\log^+(f^{\circ (n+1)}(z))-\log n=\\
&\ =-\log^+\left(\varphi_+^{-1}\big(\frac{\varphi_+(z)}{1-(n+1)\varphi_+(z)}\big)\right)-\log n=\\
&\ =-\log^+\left[\left(\frac{\varphi_+(z)\cdot n}{1-(n+1)\varphi_+(z)}\right)\left(1+O\big(\frac{\varphi_+(z)}{1-(n+1)\varphi_+(z)}\big)\right)\right]=\\
&\ =-\log^+\left(\frac{1}{\frac{1-\varphi_+(z)}{n\varphi_+(z)}-1}\right) -\log^-\left(1+O\big(\frac{\varphi_+(z)}{1-(n+1)\varphi_+(z)}\big)\right)=\\
&\hspace{8cm}=\ -i\pi+o(1),\ n\to\infty. 
\end{align*}
Here, $\log^- z$ denotes the principal branch of logarithm for $\arg z\in(-\pi,\pi)$ and $\log^+ z$ the branch for $\arg z\in(0,2\pi)$. The function $\varphi_+$, $\varphi_+(z)=z+a_1z^2+o(z^2)$, denotes the analytic change of variables on $V_+$ that reduces $f$ to its formal normal form $f_0$, $f_0(z)=\frac{z}{1-z}$.

Passing to the limit in \eqref{lim2}, by \eqref{cc}, we get the pointwise equality:
$$
H_+(z)=\pi\cdot c_0\Big(\sum_{k=0}^{n}f^{\circ k}(z)\Big)-i\pi^2,\ z\in V_+.
$$
By \eqref{usp}, we conclude
\begin{equation*}
H^f(z)+\frac{\pi}{4}-i\pi^2=H_+(z).
\end{equation*}
Therefore, since $H_+$ is analytic on $V_+$, $H^f$ is also analytic on $V_+$.

\smallskip

Analyticity of $H^{f^{\circ -1}}$ on $V_-$ can be proven in the same manner, considering inverse diffeomorphism $f^{\circ -1}$, and comparing $H^{f^{\circ -1}}$ with sectorial solution $H_-$ of equation \eqref{g} on $V_-$. By Proposition~\eqref{formal},
\begin{align*}
H_-(z)&=\pi \lim_{n\to\infty}\left(-\sum_{k=1}^{n+1}(f^{\circ -1})^{\circ k}(z)-\log (f^{\circ -1})^{\circ-(n+1)}(z)\right),\ z\in V_-.\nonumber
\end{align*}
$\hfill\Box$

\section{Applications of Theorem~\ref{glo}, global principal parts}\label{app}

\subsection{Application of Theorem~\ref{glo} to the standard Abel equation}\label{fourone}

The trivialization (Abel) equation for a parabolic germ $f$ is central object for describing analytic class of $f$:
\begin{equation}\label{aabel}
\Psi(f(z))-\Psi(z)=1.
\end{equation}
We use here Theorem~\ref{glo} to derive a well-known result by \' Ecalle and Voronin. Of course this is not new, and we put it here only as a trivial example. A parabolic germ $f$ is analytically conjugated to the model $f_0$, $f_0(z)=\frac{z}{1-z}$, if and only if equation \eqref{aabel} has a global (analytic in some punctured neighborhood of the origin) solution $\Psi$. The solution is related to the analytic conjugacy $\varphi$ conjugating $f$ to $f_0$ by $\Psi=\Psi_0\circ\varphi(z),$ where $\Psi_0(z)=-1/z$.

\smallskip

\noindent \emph{Proof by Theorem~\ref{glo}}.
Abel equation is a cohomological equation with the right-hand side $g\equiv 1$. Therefore, $h_{1,0}(z)=-1/z$. By \eqref{f2}, we get that there exists a global analytic solution of \eqref{aabel} if and only if $f$ is given by
$$
f(z)=\varphi^{-1}\left(-\frac{1}{-\frac{1}{\varphi(z)}+1}\right)=\varphi^{-1}\circ f_0\circ \varphi (z),
$$
for some analytic diffeomorphism $\varphi$. It is unique up to additive constant and, by \eqref{nakon}, of the form $\Psi=\Psi_0\circ \varphi.$$\hfill\Box$

\subsection{Nontrivial application of Theorem~\ref{glo} to global analiticity of principal parts of areas. Proof of Theorem~\ref{pringlo}}\label{fourtwo}\

We prove here Theorem~\ref{pringlo} from Section~\ref{one}, which gives the necessary and sufficient conditions on a diffeomorphism $f$ for global analyticity of its principal parts of areas. 
The theorem was motivated by the following. As mentioned in Subsection~\ref{fourone}, the existence of the global solution of the trivialization equation (the global Fatou coordinate) signals that a diffeomorphism is analytically conjugated to the model diffeomorphism $f_0$. On the other hand, when we consider $\varepsilon$-neighborhoods of orbits and their principal parts of areas, the equation 
\begin{equation}\label{haha}
H(f(z))-H(z)=-\pi z
\end{equation}
naturally arises, see Theorem~\ref{ppart} in Section~\ref{one}. This equation looks similar to the trivialization equation. The idea behind Theorem~\ref{pringlo} was to express the existence of its global solution in terms of $f$. 

\medskip
\emph{Proof of Theorem~\ref{pringlo}.} The theorem is a direct consequence of Theorem~\ref{ppart} and Theorem~\ref{glo}. By Theorem~\ref{ppart}, the principal parts of areas are explicitely related to the sectorial solutions of cohomological equation $H(f(z))-H(z)=-\pi z$, with right-hand side $g=-\pi\cdot \text{Id}$. By \eqref{f2} in Theorem~\ref{glo}, this equation has a global analytic solution if and only if $f(z)=\varphi^{-1}(\varphi(z)\cdot e^z)$, for some $\varphi(z)\in z+z^2\mathbb{C}\{z\}$. 
$\hfill\Box$

\smallskip
We give here two simple examples of parabolic germs that posess the \emph{global analyticity property of principal parts} from Theorem~\ref{pringlo}.
\begin{example}[Germs with \emph{global} principal parts] \
\begin{enumerate}
\item[$(1)$] $f(z)=z\cdot e^z$,\ for $\varphi=\text{Id}$, 
\item[$(2)$] $f(z)=-\log(2-e^z)$,\ for $\varphi(z)=1-e^{-z}$.
\end{enumerate}
\end{example}

\section{Counterexamples: trivial analytic class\\ versus global principal parts}\label{fourhalf}
As stated before, we consider only germs belonging to the simplest formal class $(k=1,\rho=0)$, which are prenormalized ($a_1=1)$:
$$
f(z)=z+z^2+z^3+o(z^3).
$$
The first normalizing change already performed, we further admit only changes of variables tangent to the identity. This restriction is rather standard and provides simpler presentation of analytic classification.

The idea behind this article, as mentioned in Section~\ref{one}, was to recover analytic class of a parabolic diffeomorphism by comparing principal parts of areas for $f$ and inverse diffeomorphism $f^{\circ -1}$ on the intersection of petals, or, equivalently, sectorial solutions of cohomological equation \eqref{haha}. 
In this section, we do not solve the classification problem, but present the difficulties that are encountered.

\subsection{\' Ecalle-Voronin moduli of analytic classification}\label{fhh}

We describe here one approach to \' Ecalle-Voronin moduli of analytic classification of germs, a slight reformulation of Fourier representation of analytic moduli from \cite{ecalle,voronin} or \cite{dudko}. The classes are given by pairs of germs at zero, after appropriate identifications. We will use similar approach in Section~\ref{seven} to define new classifications, using higher cohomological equations instead of Abel equation. 

Let $\Psi_+(z),\ z\in V_+,$ and $\Psi_-(z),\ z\in V_-,$ be two sectorial solutions of Abel equation (unique up to additive constant)
\begin{equation}\label{abelstd}
\Psi(f(z))-\Psi(z)=1.
\end{equation}
By $V^{up}$ and $V^{low}$, we denote the upper and the lower component of the intersection $V_+\cap V_-$:
$$V^{up}=\{z\in V_+\cap V_-|Im(z)>0\},\ V^{low}=\{z\in V_+\cap V_-|Im(z)<0\}.$$ 
By pair $\big(h,k\big)$, we denote the differences of sectorial solutions on $V^{up}$, $V^{low}$:
$$
h(z)=\Psi_+(z)-\Psi_-(z),\ z\in V^{up},\quad k(z)=\Psi_-(z)-\Psi_+(z),\ z\in V^{low}.
$$
Under notations from \cite[A.4,A.5]{loraypre}, the pair $(h,k)$ is a \emph{$1$-cocycle}, in the sense that $h$ and $k$ are analytic germs on petals $V^{up},\ V^{low}$, with an exponential decrease, as $z\to 0$. The notion of \emph{$1$-cocycles} will be used more in Section~\ref{seven}.

By equation \eqref{abelstd}, $h$ and $k$ are constant along the closed orbits in $V^{up}$ and $V^{low}$. We choose positive sector for representation of space of orbits. We represent the space of orbits on $V_+$ by punctured sphere, using the change of variables $t=e^{-2\pi i \Psi_+(z)}$ (each orbit corresponds to only one point). Now, closed orbits in $V^{up}$ lift to the punctured neighborhood of the pole $t=\infty$ and closed orbits in $V^{low}$ to the punctured neighborhood of the pole $t=0$. We thus lift $\big(h,k\big)$ to a space of orbits represented by $\Psi_+$ through a pair of germs $t\mapsto \big(g_\infty(t),g_0(t)\big)$ around $t=\infty$ and $t=0$ of punctured sphere:
$$
h(z)=g_\infty(e^{-2\pi i \Psi_+(z)}),\ z\in V^{up};\quad k(z)=g_0(e^{-2\pi i \Psi_+(z)}),\ z\in V^{low}.
$$
Additionally, inverting $g_\infty$, $g_\infty(t)=g_\infty(1/t)$, $g_\infty$ becomes also a germ at $t=0$. It can be seen that the germs are analytic at punctured neighborhood of 0. They can moreover be extended continuously to $0$, by differences of constant terms in sectorial trivialisations. It holds that $g_\infty(0)+g_0(0)=0$. This extension is analytic at $t=0$ by Riemann's characterization of removable singularities. Therefore we get a \emph{pair of analytic germs} $\big(g_\infty,g_0\big)$ at the origin. 

We identify two pairs of germs, $(g_\infty^1,g_0^1)$ and $(g_\infty^2,g_0^2)$ if it holds that:
\begin{align}\label{idii}
g_\infty^1(0)=g_\infty^2(0)+a,&\quad g_0^1(0)=g_0^2(0)-a,\\
g_\infty^1(t)=g_\infty^2(bt),&\quad g_0^1(t)=g_0^2(t/b),\nonumber
\end{align}
for $a\in\mathbb{C}$ and $b\in\mathbb{C}^*$.
This corresponds to choosing trivialisation functions $\Psi_+$ and $\Psi_-$ up to an additive constant.

The \' Ecalle-Voronin classification theorem states that \emph{there exists a bijective correspondence between all analytic classes of diffeomorphisms from the model formal class and all pairs $(g_\infty,g_0)$ of analytic germs at $t=0$ such that $g_\infty(0)+g_0(0)=0$, after identifications \eqref{idii}}. 
\smallskip

The class of diffeomorphisms analytically conjugated to the model $f_0$ is characterized (up to additive constant) by:
\begin{align*}
\Psi_+(z)-\Psi_-(z)\equiv 0,\ z\in V_+\cap V_-.
\end{align*} Thus, it is given by the trivial pair of germs, $(0,0)$, up to identifications \eqref{idii}. That is, by pairs of constant germs of the type $(-a,a)$, $a\in\mathbb{C}$.

\subsection{Subtracting principal parts on intersections of petals, analytic class cannot be read.}\label{fivesecond}\

In our considerations of $\varepsilon$-neighborhoods, the equation
\begin{equation}\label{haj}
H(f(z))-H(z)=-z
\end{equation}
naturally appears instead of Abel equation. We show that considering the difference $H_+(z)-H_-(z)$ on $z\in V_+\cap V_-$ is not sufficient for determining analytic class of $f$, as was the case with Abel equation above

By Riemann's theorem on removable singularities ($R_\pm(z)=H_\pm(z)+\log z$ being bounded on $V_\pm$), the equality of sectorial solutions without constant term of \eqref{haj} on $V^{up}$ and $V^{low}$ (up to a constant from different branches of logarithm):
\begin{equation}\label{differences}
H_+(z)-H_-(z)\equiv 0,\ z\in V^{up};\ \ H_+(z)-H_-(z)\equiv 2\pi i,\ z\in V^{low},
\end{equation}
corresponds to the fact that equation \eqref{haj} has a global analytic solution. 

Let $\mathcal{C}_0$ be the class of diffeomorphisms analytically equivalent to $f_0$. Denote by $\mathcal{S}$ the set of diffeomorphisms where \eqref{haj} has a global analytic solution. By Theorem~\ref{pringlo}, it follows that
$$
\mathcal{S}=\Big\{f(z)=z+z^2+z^3+o(z^3)\Big| f=\varphi^{-1}(e^z\cdot\varphi(z)),\Big.\ \varphi(z)=z+z^2\mathbb{C}\{z\}\Big\}.
$$
The following example shows that the intersection $\mathcal{S}\cap\mathcal{C}_0$ is nonempty. Furthermore, neither of the sets is a subset of another. This pertains to the fact that trivial analytic class and \emph{trivial} class with respect to 1-Abel equation are in general position.
\begin{example}\label{exx1}
\begin{align*}
&f(z)=-\log(2-e^z)\in \mathcal{S}\cap \mathcal{C}_0,\\
&g(z)=z e^z,\ g(z)\in\mathcal{S},\ g(z)\notin\mathcal{C}_0,\\
&f_0(z)\in\mathcal{C}_0,\ f_0(z)\notin\mathcal{S}. 
\end{align*}
In the first example, we take $\varphi^{-1}(z)=-\log(1-z)$ for both classes. The second example follows from the fact that no entire function is analytically conjugated to $f_0$, see \cite{ahern}. The third example follows from Example~\ref{ex1} below.
\end{example}

Since triviality of differences \eqref{differences} is possible for germs analytically conjugated to $f_0$, as well as for those not conjugated, we conclude that information given by the differences $H_+(z)-H_-(z)$ on $V^{up}\cap V^{low}$ is not sufficient for determining the analytic class.

\medskip
In the next example, we compute explicitely the 1-cocycle of differences $H_+(z)-H_-(z),\ z\in V^{up}\cup V^{low},$ for the simplest model diffeomorphism $f_0$. The difference of sectorial trivialisations $\Psi_+(z) -\Psi_-(z)$ was in this case trivial. Here we get a non-trivial cocycle. We apply the method of Borel-Laplace summation directly to the difference equation. The procedure is standard and a similar one can be found in e.g. \cite[Example 2]{dudko} or \cite{candelpergher}.
\begin{example}[The differences for the model germ $f_0(z)=\frac{z}{1-z}$]\label{ex1}
We substitute $\widehat{H}=-Log+\widehat{R},\ \widehat{R}\in z\mathbb{C}[[z]],$ in the equation \eqref{haj} for $f_0$ and thus obtain the equation for $\widehat{R}$:
\begin{equation*}
\widehat{R}(f_0(z))-\widehat{R}(z)=-z+Log\frac{f_0(z)}{z}.
\end{equation*}
By the change of variables $w=-1/z$, denoting $\widehat{\widetilde R}(w)=\widehat{R}\circ \chi,\ \chi(w)=-1/w$, we get
\begin{align}\label{er}
\widehat{\widetilde{R}}(w+1)-\widehat{\widetilde{R}}(w)&= w^{-1}- Log(1+w^{-1})=\sum_{k=2}^{\infty}(-1)^{k}\frac{w^{-k}}{k}.
\end{align}
The right-hand side of this equation is of the type $w^{-2}\mathbb{C}\{w^{-1}\}$. We denote it by $$b(w)=\sum_{k=2}^{\infty}\frac{(-1)^k}{k}w^{-k}.$$  

Applying the Borel transform to \eqref{er}, we get
\begin{equation*}
\mathcal{B}\widehat{\widetilde{R}}(\xi)=\frac{\mathcal{B}b(\xi)}{e^{-\xi}-1}, \ \ \mathcal{B}b(\xi)=\frac{e^{-\xi}+\xi-1}{\xi}.
\end{equation*}
It can be shown that the function $\xi\mapsto\mathcal{B}\widehat{\widetilde{R}}(\xi)$ has $1$-poles at $2i\pi\mathbb{Z}^*$ in directions $\theta=\pm\pi/2$, and it is exponentially bounded and analytic in every other direction. For details, see \cite{dudko}.
Therefore, $\widehat{\widetilde R}$ is 1-summable in the arcs of directions $I_+=(-\pi/2,\pi/2)$ and $I_-=(\pi/2,3\pi/2)$. The Laplace transform yields two analytic solutions as $1$-sums, $\widetilde R^+$ on $W_+=\{w\ |\ Re(w e^{i\theta})>\beta_0,\ \theta\in I_+\}$, and $\widetilde R^{-}$ on $W_-=\{w\ |\ Re(w e^{i\theta})>\beta_0,\ \theta\in I_-\}$, where $\beta_0>0$ is some constant. By the residue theorem applied to the difference of Laplace integrals, on intersections of $W_+$ and $W_-$ they differ by $1$-periodic functions. For $w\in W^{up}=\{w|\ Im(w)>\beta_0\}$, we have:
\begin{align*}
\widetilde{R}^+(w)-\widetilde{R}^-(w)&=\int_{0}^{\infty\cdot e^{i\theta_1}} \frac{e^{-\xi w}\mathcal{B}b(\xi)}{e^{-\xi}-1}d\xi  -\int_{0}^{\infty\cdot e^{i\theta_2}}\frac{e^{-\xi w}\mathcal{B}b(\xi)}{e^{-\xi}-1}d\xi=\\
&=\int_{\infty\cdot e^{i\theta_2}}^{\infty\cdot e^{i\theta_1}} \frac{e^{-\xi w}\mathcal{B}b(\xi)}{e^{-\xi}-1}d\xi =\\
&=2\pi i\cdot\sum_{k=1}^{\infty} Res(\frac{e^{-\xi w}\mathcal{B}b(\xi)}{e^{-\xi}-1},\xi=-2\pi i k)=\\
&=-2\pi i\sum_{k\in\mathbb{N}}e^{2\pi i k\cdot w}=-2\pi i\frac{e^{2\pi i \cdot w}}{1-e^{2\pi i \cdot w}}.
\end{align*}
Here, $\theta_2\in(-\pi/2,\pi/2)$ and $\theta_1\in(\pi/2,3\pi/2)$ are close to $-\pi/2$. 

Similarly, for $w\in W^{low}=\{w|\ Im(w)<-\beta_0\}$, we get
\begin{align*}
\widetilde{R}^+(w)-\widetilde{R}^-(w)&=2\pi i\frac{e^{-2\pi i \cdot w}}{1-e^{-2\pi i \cdot w}}.
\end{align*}
\noindent Replacing $\widehat{\widetilde R}$ with $\widehat{\widetilde H}$ and returning to the variable $z=-1/w$, we get
\begin{align*}
H_+(z)-H_-(z)&=-2\pi i \frac{e^{-2\pi i \frac{1}{z}}}{1-e^{-2\pi i \frac{1}{z}}}=-2\pi i f_0(e^{-2\pi i \frac{1}{z}}),\ z\in V^{up},\\[0.1cm]
H_-(z)-H_+(z)&=-2\pi i-2\pi i \frac{e^{2\pi i \cdot \frac{1}{z}}}{1-e^{2\pi i \cdot \frac{1}{z}}}=-2\pi i-2\pi i f_0(e^{2\pi i \cdot \frac{1}{z}}),\\
&\hspace{7cm} z\in V^{low}.\nonumber
\end{align*}
Here, $V_+$ and $V_-$ are petals in the $z$-plane, obtained by inverting $W_+$ and $W_-$ by $z=-1/w$, and $V^{up}$ and $V^{low}$ are their intersections.
\end{example}

We see that for the model $f_0$, the \emph{1-cocycle} of differences $H_+-H_-$ lifted to orbit space is exactly the germ $-2\pi i f_0(t)$ itself, in both components. See Section~\ref{seven} for details. This is certainly not a coincidence. It would be interesting to have some geometrical explanation.

In the above manner, the differences can be computed by Borel-Laplace transform for any germ $f$ analytically conjugated to $f_0$, and it can be seen in general that the cocycles are not trivial. 
\smallskip
\begin{example}[Explicit formulas for the sectorial solutions $H^{f_0}_{\pm}$ of $1$-Abel equation for the model $f_0$]\label{ex2}
By \eqref{usp} in the proof of Theorem~\ref{ppart}, we get:
\begin{align*}
H^{f_0}_+(z)&=\pi\cdot c_0\Big(\sum_{k=0}^{n}\frac{z}{1-kz}\Big)-i\pi^2=\pi\cdot \frac{d}{dz}\log (\Gamma(z))\Big|_{-\frac{1}{z}}\Big.-i\pi^2,\ z\in V_+,\\
H^{f_0}_-(z)&=\pi z-\pi\cdot c_0\Big(\sum_{k=0}^{n}\frac{z}{1+kz}\Big)=\pi z+\pi\cdot \frac{d}{dz}\log (\Gamma(z))\Big|_{\frac{1}{z}}\Big.,\ z\in V_-.
\end{align*}
Here, $\Gamma$ is the standard Gamma function, holomorphic on $\mathbb{C}\setminus -\mathbb{N}_0$. Therefore, $H^{f_0}_{\pm}$ are well-defined and analytic on $V_\pm$.
\end{example}

\section{Higher-order moments and higher conjugacy classes}\label{seven}
In this section, we put the difference of solutions of Abel and of $1$-Abel equation in a more general context. We have anticipated in Theorem~\ref{glo} and supported in Examples \ref{exx1} and \ref{ex1} the fact that the trivial class for $1$-Abel equation \eqref{haj} is not related to the trivial analytic class. 

Nevertheless, subtracting the sectorial solutions $H_+-H_-$ of \eqref{haj}, we define another classification of parabolic diffeomorphisms, different from the analytic classification. Further classifications can be derived by comparing sectorial solutions of higher-order cohomological equations. 

It would be interesting to analyse the relative position of classes defined by equations of different orders. For our problem, the relative position of analytic classes and classes with respect to the equation \eqref{haj} is most important, to see how \emph{far away} they actually are from each other. Here we define higher-order classes and give some results about their positions.

Let
\begin{equation}\label{korder}
H(f(z))-H(z)=-z^m
\end{equation}
be the \emph{$m$-Abel equation} for germ $f$, $m\in \mathbb{N}_0$, see Definition~\ref{propabelgen} in Section~\ref{one}. By Proposition~\ref{formal}, there exist unique up to constant analytic solutions $H_+^m$ and $H_-^m$ of \eqref{korder} on petals $V_+$ and $V_-$. Subtracting \eqref{korder} for $H_+^m$ and $H_-^m$, we easily get that
$
H_+^m-H_-^m$ is constant along the closed orbits in $V^{up}$ and $V^{low}$, as was the case in Subsection~\ref{fhh}.
  
We now mimic the procedure described in Subsection~\ref{fhh} to define new classifications imposed by higher-order Abel equations. We can lift the exponentially decaying $1$-cocycle $(h,k)$ on $V^{up}\times V^{low}$ to space of orbits of both sectors by composition with exponential function:
\begin{align}\label{baz} 
h(z)=H_+^m(z)-H_-^m(z)=&g_\infty^{m,+}(e^{-2\pi i \Psi_+(z)})=g_\infty^{m,-}(e^{-2\pi i \Psi_-(z)}),\ z\in V^{up},\nonumber\\[0.1cm]
k(z)=H_-^m(z)-H_+^m(z)=&(-2\pi i)+ g_0^{m,+}(e^{-2\pi i \Psi_+(z)})=\nonumber\\
=&(-2\pi i) + g_0^{m,-}(e^{-2\pi i \Psi_-(z)}),\ \hfill z\in V^{low}.
\end{align}
The term $2\pi i$ is put in brackets, since it appears only in the case when $m=1$, due to different branches of logarithm.

For representation of functions defined on orbit spaces, in the sequel we always choose trivialization function $\Psi_+$ of the attracting sector. Therefore, we work only with functions $t\mapsto g^{m,+}_{\infty}(t)$ and $t\mapsto g^{m,+}_0(t)$ on the neighborhoods of poles $t=\infty$ and $t=0$ of a punctured sphere, and denote them simply by $g^m_0$, $g^m_\infty$. We invert $g^m_\infty(t)=g^m_\infty(\frac{1}{t})$ to obtain two analytic germs at zero. Both germs can be extended analytically to $t=0$, see \cite{loraypre}.  

Note that the trivialisation function $\Psi_+$ is determined only up to an arbitrary constant. Also, if we add any complex constant to $H_+^m$ or $H_-^m$, they remain the solutions of $1$-Abel equation \eqref{haj}. As before, due to this freedom of choice, we identify two pairs of germs $(g_\infty^1,g_0^1)$ and $(g_\infty^1,g_0^1)$ if \eqref{idii} holds.

Note that we can always suppose that $g_0^m(0)+g_\infty^m(0)=0$. The constant term in germs is the difference of constant terms in solutions $H_+^m$ and $H_-^m$, so we choose the solutions without constant term.

With all the notations as above, we define
\begin{definition}[$m$-moments for diffeomorphisms]\label{momentdefinition}
Let $m\in\mathbb{N}_0$. The \emph{$m$-moment of a diffeomorphism $f$ with respect to trivialization function of the attracting petal} or, shortly, \emph{$m$-moment of $f$}, is the pair $$\big(g^m_\infty,\ g^m_0\big)$$ of analytic germs at zero from \eqref{baz}, up to identifications \eqref{idii}.
\end{definition}
Note that the germs are not necessarily diffeomorphisms.

\begin{remark}\label{ovaj}
In the case of $1$-Abel equation, the $1$-moments are in fact defined by subtracting the sectorial solutions $R_+-R_-$, $z\in V^{up}\cap V^{low}$, of the modified equation 
\begin{equation*}
R(f(z))-R(z)=-z+\log\Big(\frac{f(z)}{z}\Big),
\end{equation*}
instead of sectorial solutions $H_+-H_-$ of the original $1$-Abel equation \eqref{haj}. Here, $H(z)=-\log z+R(z)$. Thus we remove the constant term $-2\pi i$ in \eqref{baz}, coming from different branches of logarithm. 
\end{remark}
\medskip

We now divide the germs of formal type $(k=1,\lambda=0)$ into equivalence classes, putting those which share the same $m$-moment inside the same class.
\begin{definition}[The $m$-conjugacy relation for parabolic germs]
Let $m\in\mathbb{N}_0$. The \emph{$m$-conjugacy} is the equivalence relation on the set of all germs from the model formal class, given by
$$
f_1\stackrel{m}\sim f_2 \text{, if and only if $f$ and $g$ have the same $m$-moments}.
$$
By $[f]_m=\{g\ |\ g\stackrel{m}\sim f\}$
we denote the equivalence class of $f$ with respect to $m$-conjugacy.
\end{definition}

\medskip
\noindent We illustrate the definition on the two most important examples for this work.
\begin{example}[$0$- and $1$-conjugacy classes]\ 
\begin{enumerate}
\item The $0$-Abel equation is in fact Abel equation.  The $0$-moments correspond to \' Ecalle-Voronin moduli, as described in Subsection~\ref{fhh}. The $0$-conjugacy classes correspond to standard analytic classes. In particular, the germs analytically conjugated to the model $f_0$ have trivial $0$-moment, the pair $(0,0)$ $($the Abel equation has globally analytic solution$)$.
\item The $1$-conjugacy classes are obtained using $1$-Abel equation \eqref{haj}. By Theorem~\ref{glo}, the trivial $1$-conjugacy class $($the set of all germs with $1$-moments equal to $(0,0)$, that is, the set of all germs with globally analytic solution to equation \eqref{haj}$)$ is the set $$\mathcal{S}=\Big\{f\ |\ f=\varphi^{-1}(e^z\cdot\varphi(z)),\Big.\ \varphi(z)=z+z^2\mathbb{C}\{z\}\Big\}.$$
\end{enumerate}
\end{example}
\medskip

We complete the section with converse question of \emph{realization} of $0$-moments and $1$-moments. The question is important since it states that all possible $0$- or $1$-conjugacy classes may be represented by all possible pairs of analytic germs $(g_1,g_2)$ at zero, such that $g_1(0)+g_2(0)=0$, up to identifications \eqref{idii}.
 
\begin{proposition}[Realization of $0$-moments]\label{remi} For every pair $(g_1,g_2)$ of analytic germs at zero, such that $g_1(0)+g_2(0)=0$, there exists a germ $f$ from model formal class, such that the pair $\big(g_1,g_2\big)$ is realised as is its $0$-moment. 
\end{proposition}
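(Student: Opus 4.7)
The plan is to invoke the classical Ecalle-Voronin realization theorem for analytic moduli of parabolic germs, which is exactly the statement in question: since the $0$-moments from Definition~\ref{momentdefinition} coincide (up to identifications \eqref{idii}) with the standard Ecalle-Voronin invariants, the proposition follows from the known realization result \cite{ecalle,voronin}. I only sketch the idea, since the construction is standard.

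After normalizing by \eqref{idii} with $a = -g_1(0)$ (using the hypothesis $g_1(0) + g_2(0) = 0$), I would reduce to the case $g_1(0) = g_2(0) = 0$, so that both germs lie in $t\mathbb{C}\{t\}$. The construction of $f$ then proceeds by gluing two abstract Fatou half-planes along the two crescents of overlap, using the prescribed data. Concretely, I would take two copies $W_+,W_-$ of $\mathbb{C}$, intended as the images of the sectorial Fatou coordinates $\Psi_+$ and $\Psi_-$, each carrying the translation $T(w) = w+1$, and glue them over two disjoint \emph{crescent} regions via the biholomorphism germs
\begin{align*}
w \in W_+ \cap V^{up} &\ \longleftrightarrow\ w' = w - g_1\bigl(e^{-2\pi i w}\bigr) \in W_- \cap V^{up},\\
w \in W_+ \cap V^{low} &\ \longleftrightarrow\ w' = w + g_2\bigl(e^{-2\pi i w}\bigr) \in W_- \cap V^{low}.
\end{align*}
Since $g_1(0)=g_2(0)=0$, these identifications are tangent to the identity at the cusps $\mathrm{Im}(w) \to \pm\infty$; since $T$ commutes with them (the right-hand sides depend only on $e^{-2\pi i w}$), it descends to a well-defined holomorphic automorphism of the glued manifold $X$.

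I would then show that $X$ is biholomorphic to a punctured neighborhood of $0 \in \mathbb{C}$, and that the descended map extends to a parabolic germ $f$ at $0$ in the model formal class. Once this is done, the sectorial inverses of the inclusions $W_\pm \hookrightarrow X$ are, by construction, the sectorial Fatou coordinates of $f$, and their differences on $V^{up}$ and $V^{low}$ reproduce $g_1$ and $g_2$ respectively via \eqref{baz}. A further affine adjustment of the coordinate at $0$ places $f$ in the prenormalized form $z+z^2+O(z^3)$.

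The main obstacle is the analytic/geometric step of showing that $X$ is indeed biholomorphic to a punctured disk, equivalently that the orbit space $X/\langle T\rangle$ is a compact Riemann surface of genus zero with two marked points. This requires a careful application of uniformization and uses the hypothesis $g_1(0)+g_2(0)=0$ in an essential way: it kills the potential logarithmic monodromy of the gluing at the two cusps, ensuring that both ends of $X$ close up to ordinary fixed points of $T$ rather than to multivalued branch points. For a detailed execution I would follow the expositions in \cite[A.4--A.5]{loraypre} or \cite{voronin}.
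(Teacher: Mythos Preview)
Your proposal is correct and takes the same approach as the paper: both simply invoke the classical \'Ecalle--Voronin realization theorem, since the $0$-moments are nothing other than the \'Ecalle--Voronin moduli as described in Subsection~\ref{fhh}. The paper's proof is in fact a single sentence citing \cite{ecalle,voronin} and \cite[Theorem~18]{dudko}; your additional sketch of the gluing construction behind that classical result is accurate but goes beyond what the paper provides.
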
  
\begin{proof}
The $0$-moments are in fact \' Ecalle-Voronin moduli, and the statement follows directly from the theorem of realization of \' Ecalle-Voronin moduli, see \cite{ecalle,voronin} or \cite[Theorem 18]{dudko}.
\end{proof}

\begin{proposition}[Realization of $1$-moments]\label{realis}
For every pair $(g_1,g_2)$ of analytic germs at zero, such that $g_1(0)+g_2(0)=0$, there exists a diffeomorphism $f$ from model formal class, such that the pair $\big(g_1,g_2\big)$ is realised as its $1$-moment. 
\end{proposition}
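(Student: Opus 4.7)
I propose to prove Proposition~\ref{realis} by mimicking the \'Ecalle--Voronin realization used for $0$-moments in Proposition~\ref{remi}, now adapted to the $1$-Abel equation. Given $(g_1,g_2)$ with $g_1(0)+g_2(0)=0$, the idea is to construct sectorial analytic functions $H_+$ on $V_+$ and $H_-$ on $V_-$ with the correct formal expansion $\widehat{H}\in -\log z+z\mathbb{C}[[z]]$, whose coboundary on $V^{up}\cup V^{low}$, read in orbit-space coordinates, is the prescribed pair; the diffeomorphism $f$ is then recovered from the relation $H_{\pm}(f(z))-H_{\pm}(z)=-z$.

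The construction starts from the explicit sectorial solutions $H_\pm^{f_0}$ of the $1$-Abel equation for the model $f_0(z)=z/(1-z)$ (Example~\ref{ex2}), whose coboundary realizes the $1$-moment $(\gamma_\infty^{f_0},\gamma_0^{f_0})$ of $f_0$ computed in Example~\ref{ex1}. The adjustment pair $(g_1-\gamma_\infty^{f_0},\,g_2-\gamma_0^{f_0})$ still satisfies the normalization condition and, pulled back through the reference Fatou coordinate $t=e^{-2\pi i\,\Psi_0(z)}$ with $\Psi_0(z)=-1/z$, becomes an exponentially flat $1$-cocycle $(h,-k)$ for the Leau--Fatou cover $\{V_+,V_-\}$. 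By the Ramis--Sibuya theorem (equivalently, the sheaf-cohomology vanishing statement in \cite[A.5]{loraypre}), this cocycle splits as a coboundary: there are sectorial analytic $A_\pm$ on $V_\pm$ admitting the zero asymptotic expansion at $0$, with $A_+-A_-=h$ on $V^{up}$ and $A_+-A_-=-k$ on $V^{low}$.

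I then set $H_\pm:=H_\pm^{f_0}+A_\pm$, which still has the required logarithmic leading term, and, on each petal, define the candidate diffeomorphism $f_\pm(z):=H_\pm^{-1}\!\bigl(H_\pm(z)-z\bigr)$. Both $f_+$ and $f_-$ are parabolic germs tangent to the identity belonging to the model formal class (the flat perturbation $A_\pm$ does not alter the first jets of the induced map). To finish, one must verify that $f_+=f_-$ on the overlap; then Riemann's removable singularity theorem assembles them into a single parabolic germ $f$ at $0$, by construction $H_\pm$ are the unique sectorial solutions of the $1$-Abel equation for this $f$, and their coboundary yields exactly $(g_1,g_2)$ as $1$-moment.

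The main obstacle is precisely this compatibility step. The equality $f_+=f_-$ is equivalent to orbit-invariance of the coboundary $H_+-H_-$ under the yet-to-be-constructed $f$, whereas the lift $(h,-k)$ was engineered to be orbit-invariant only under $f_0$; the discrepancy is exponentially small but must be removed exactly. I intend to handle this by a fixed-point iteration: at each step, redo the Ramis--Sibuya splitting using the Fatou coordinates of the current iterate $f^{(n)}$, and show that the resulting sequence converges in a suitable Banach space of analytic germs of the form $z+z^2\mathbb{C}\{z\}$. A cleaner alternative is to regard Proposition~\ref{realis} as a direct corollary of Theorem~\ref{bijectiveness}, which asserts the stronger statement that every analytic class contains a representative realizing any prescribed $1$-moment; in particular, the trivial analytic class $[f_0]_0$ does, which proves Proposition~\ref{realis} at once.
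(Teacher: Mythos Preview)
Your direct construction has a genuine gap at exactly the point you flag. You build $H_\pm=H_\pm^{f_0}+A_\pm$ and then set $f_\pm=H_\pm^{-1}(H_\pm-z)$, but as you observe, $f_+=f_-$ on the overlap is equivalent to $f_+$-invariance of the cocycle $H_+-H_-$, while the cocycle was made $f_0$-invariant. The proposed fixed-point iteration is only a sketch: you would have to control the Ramis--Sibuya splitting uniformly along the sequence, show the iterates stay in a fixed domain, and prove contraction in a concrete norm. None of this is carried out, and it is not clear it can be made to work without substantial effort.

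The paper sidesteps this entirely with a one-line trick, and it is worth internalizing because it is the content of Theorem~\ref{bijectiveness} (your ``cleaner alternative'' is exactly how the paper proceeds). Fix any $f$ in the target analytic class, lift $(g_1,g_2)$ to an $f$-invariant cocycle $(T_\infty,T_0)$ via $\Psi_+^f$, and split it by Ramis--Sibuya as $H_+-H_-$. Now \emph{do not} try to force the right-hand side to be $-z$. Instead, set $\widetilde H_\pm=-H_\pm+\log z$ and simply \emph{define}
\[
\delta_\pm(z)=\widetilde H_\pm(f(z))-\widetilde H_\pm(z).
\]
Because the cocycle is $f$-invariant by construction, $\delta_+$ and $\delta_-$ agree on the overlaps and glue to an analytic germ $\delta\in z+z^2\mathbb{C}\{z\}$. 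So $\widetilde H_\pm$ solve a cohomological equation for $f$ with analytic right-hand side $\delta$. The change of variables $w=\delta(z)$ turns this into the $1$-Abel equation for $g=\delta\circ f\circ\delta^{-1}$, which lies in $[f]_0$ and has $1$-moment $(g_1,g_2)$. No iteration, no compatibility problem: the freedom to let the right-hand side be $\delta$ rather than $-z$, and then conjugate it away, is exactly what your approach lacks.
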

Note that by varying constant term chosen in sectorial trivialisation function $\Psi_+$ and constants chosen in solutions $H_+$ and $H_-$, we can realise all other $1$-moments identified by \eqref{idii} by the same germ $f$. 
\begin{proof}
This proposition is proven in the proof of Theorem~\ref{bijectiveness} in Subsection~\ref{sevenone} below.
\end{proof}

\subsection{Relative position of $1$-conjugacy and analytic classes.}\label{sevenone}\

It was noted in Example~\ref{ex2} in Subsection~\ref{fivesecond} that there exists no inclusion relation between the trivial analytic class and the trivial $1$-conjugacy class. We investigate here the relative position of analytic classes and $1$-conjugacy classes and prove that they lie in transversal position, that is, they are far away and not related to each other. In this way we explain and support counterexamples from Subsection~\ref{fivesecond}, that claimed that analytic class cannot be read from the differences of sectorial solutions of $1$-Abel equation. On the other hand, they can be read from differences of sectorial solutions of Abel equation. 

On the other hand, the relative positions of higher conjugacy classes to each other are not discussed and remain the subject for further research.
\medskip

Let $\Phi$ denote the mapping
$$
\Phi(f)=[f]_1,
$$
defined on the set of all germs from model formal class. It attributes to each diffeomorphism its $1$-conjugacy class. By Proposition~\ref{realis}, the $1$-conjugacy classes can equivalently be represented by all pairs of analytic germs $(g_1,g_2)$, $g_1(0)+g_2(0)=0$, up to identifications \eqref{idii}.
\bigskip

We precisely state and prove Theorem~\ref{bijectiveness} from Section~\ref{one}:
\begin{theoreme}[\emph{Transversality} of analytic and $1$-conjugacy classes]
Let $[f]_0$ denote any analytic class. The restriction $\Phi\Big|_{[f]_0}\Big.$ maps surjectively from $[f]_0$ onto the set of all $1$-conjugacy classes. 
\end{theoreme}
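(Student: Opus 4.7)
The plan is to fix any representative $f$ of the analytic class $[f_0]_0$ and show that a suitably chosen tangent-to-identity analytic conjugation $f \mapsto f' = \varphi^{-1} \circ f \circ \varphi$---which automatically preserves the $0$-moment, so $f'$ remains in $[f_0]_0$---shifts the $1$-moment to any prescribed target. This reduces the transversality statement to a realization question for a family of second-order cohomological equations.

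The first step is to derive the transformation law for $\mu_1$ under conjugation. A direct computation shows that if $H_\pm$ are the sectorially analytic $1$-Abel solutions of $f$ on $V_\pm$, then
\begin{equation*}
H'_\pm(w) := H_\pm(\varphi(w)) - K_\pm(\varphi(w))
\end{equation*}
are sectorial $1$-Abel solutions for $f'$, provided $K_\pm$ are the sectorial solutions of
\begin{equation*}
K(f(w)) - K(w) = \varphi^{-1}(w) - w.
\end{equation*}
Since the right-hand side lies in $w^2 \mathbb{C}\{w\}$, Proposition~\ref{formal} yields unique $K_\pm$ without constant term, whose difference defines a $1$-cocycle $\mu_K$ on $V^{up} \cup V^{low}$. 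Reading the cocycle of $H'_\pm$ in the orbit-space coordinate $t = e^{-2\pi i \Psi_+^f}$---which coincides with the $f'$-orbit coordinate $e^{-2\pi i \Psi_+^f \circ \varphi}$ under $z = \varphi(w)$---one obtains the clean transformation law
\begin{equation*}
\mu_1(f') = \mu_1(f) - \mu_K.
\end{equation*}

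Consequently, for any target $1$-moment $\mu_1^{\mathrm{target}}$, surjectivity follows once we exhibit some $\ell(w) \in w^2 \mathbb{C}\{w\}$ such that the cohomological equation $K \circ f - K = \ell$ has sectorial solutions with cocycle $\mu_1(f) - \mu_1^{\mathrm{target}}$. Indeed, we then set $\varphi^{-1}(w) := w + \ell(w)$, which is tangent to the identity and analytically invertible, and $f' := \varphi^{-1} \circ f \circ \varphi$ meets $\mu_1(f') = \mu_1^{\mathrm{target}}$. The normalization $g_\infty(0) + g_0(0) = 0$ and the identifications~\eqref{idii} are compatible with the additive-constant freedom in $H_\pm, K_\pm$ and the affine freedom in $\Psi_+$, so they are absorbed at this step.

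The main technical obstacle is thus the realization claim: the map $\ell \mapsto \mu_K$ from $w^2 \mathbb{C}\{w\}$ onto admissible $1$-cocycles is surjective. My approach is a Cauchy--Heine decomposition. Given a prescribed admissible pair $(c_\infty, c_0)$, pull them back through the Fatou coordinate to $V^{up}$ and $V^{low}$, and write each as a difference $K^+_\bullet - K^-_\bullet$ of sectorially analytic functions on $V_\pm$ using Cauchy's integral formula along a contour cut along the two components $V^{up}$, $V^{low}$. Set $K_\pm := K_\pm^\infty + K_\pm^0$ on $V_\pm$, and define $\ell$ on $V_+$ by $\ell := K_+ \circ f - K_+$ and on $V_-$ by $\ell := K_- \circ f - K_-$. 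Because the cocycle $K_+ - K_-$ lifts to an $f$-invariant germ on the orbit space, these two definitions agree on $V^{up} \cup V^{low}$, so $\ell$ is single-valued on a punctured neighborhood of $0$ with Gevrey estimates, and Riemann's theorem on removable singularities produces the analytic extension $\ell \in w^2 \mathbb{C}\{w\}$. Combining with the two previous steps yields surjectivity of $\Phi|_{[f_0]_0}$ and completes the proof.
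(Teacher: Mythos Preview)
Your proposal is correct and follows essentially the same route as the paper: realize the prescribed cocycle as differences of sectorial functions (you via Cauchy--Heine, the paper via Ramis--Sibuya), use $f$-invariance of the cocycle to glue $K_+\circ f - K_+$ and $K_-\circ f - K_-$ into a global analytic right-hand side, and conjugate by the resulting tangent-to-identity germ. Your transformation-law framing $\mu_1(f') = \mu_1(f) - \mu_K$ is a mild reorganization---the paper realizes the target cocycle directly and then inserts a $\log z$ correction to force the conjugacy to be tangent to the identity---but the core mechanism is identical.
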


We first give an outline of the proof. We first prove the statement from Proposition~\ref{realis} that every pair of germs $\big(g_1,g_2\big)$ such that $g_1(0)+g_2(0)=0$ can be realized as $1$-moment of some germ from model formal class. Moreover, we prove that each $1$-conjugacy class has its representative inside any analytic class. 

Take any analytic class $[f]_0$ and any representative $f$. Let $(g_1,g_2)$ be any pair of analytic germs, satisfying $g_1(0)+g_2(0)=0$. We will show that there exists a diffeomorphism $g\in[f]_0$ whose $1$-moment is equal to $(g_1,g_2)$. We show that there exists an analytic, tangent to the identity right-hand side $\delta$ of the cohomological equation for $f$, such that the pair $(g_1,g_2)$ represents the moment of $f$ with respect to this equation. The idea for first part is \emph{borrowed} from \cite[A.6]{loraypre}. Then, simply by change of variables, we transform the equation to $1$-Abel equation, but for a different diffeomorphism. This new diffeomorphism is analytically conjugated to $f$ by $\delta$.
\smallskip

\emph{Proof of Proposition~\ref{realis} and of Theorem~\ref{bijectiveness}.} Let $[f]_0$ be any analytic class and $f\in [f]_0$ any representative. Moreover, let $\Psi_+^f(z)$ be any trivialisation of the attracting sector $V_+$ for $f$. Let $(g_1,g_2)$ be any pair of analytic germs, satisfying $g_1(0)+g_2(0)=0$.

On some petals $V^{up}$ and $V^{low}$ of opening $\pi$ and centered at directions $\pm i$ respectively,
we define the pair $\big(T_\infty,T_0\big)$ by:
\begin{align}\label{jen}
T_\infty(z)&=g_1(e^{2\pi i \Psi_+^{f}(z)}),\ z\in V^{up},\nonumber\\
T_0(z)&=g_2(e^{-2\pi i \Psi_+^{f}(z)}),\ z\in V^{low}.
\end{align}
If $g_1(0),\ g_2(0)\neq 0$, we first subtract the constant term. This can be done without loss of generality, since a constant term can be added to any sectorial solution afterwards. Also note that $T_\infty$ and $T_0$ are $f$-invariant by construction.

The functions $z\mapsto T_0(z)$ and $z\mapsto T_\infty(z)$ are obviously analytic and exponentially decreasing of order one on $V^{up}$ and $V^{low}$. Therefore, the pair $(T_\infty,T_0)$ defines a $1$-cocycle in the sense from \cite[A.6]{loraypre}. By Ramis-Sibuya theorem, see e.g. \cite[Th\' eor\` eme]{loraypre}, there exists  $1$-summable formal series $\widehat{H}\in z\mathbb{C}[[z]]$, whose differences of $1$-sums, $H_+$ on $V_+$ and $H_-$ on $V_-$, realize the cocycle $(T_\infty,T_0)$. That is,
\begin{equation}\label{dva}
T_0(z)=H_+(z)-H_-(z) \text { on $V^{up}$},\quad T_\infty(z)=H_-(z)-H_+(z) \text{ on $V^{low}$}.
\end{equation}

We adapt now slightly functions $H_+$ and $H_-$ by adding the appropriate branch of logarithm,
\begin{equation}\label{adapt}
\widetilde{H}_+(z)=-H_+(z)+\log(z),\ z\in V_+;\ \ \widetilde{H}_-(z)=-H_-(z)+\log(z),\ z\in V_-.
\end{equation}
We define functions $\delta_\pm$ on $V_{\pm}$ respectively by:
\begin{align}\label{kasn}
\delta_+(z)&=\widetilde{H}_+(f(z))-\widetilde{H}_+(z),\ z\in V_+,\nonumber\\ 
\delta_-(z)&=\widetilde{H}_-(f(z))-\widetilde{H}_-(z),\ z\in V_-. 
\end{align}
From \eqref{dva} and \eqref{kasn}, using $f$-invariance of $T_\infty$ and $T_0$ and Riemann's theorem on removable singularities, we see that $\delta_+$ and $\delta_-$ glue to an analytic germ $\delta$. By \eqref{adapt}, $\delta(z)\in z+z^2\mathbb{C}\{z\}$. 

To conclude, $\widetilde{H}_+$ and $\widetilde{H}_-$ are sectorial solutions of the cohomological equation for diffeomorphism $f$, with the right-hand side $\delta$. That is,
$$
\widetilde{H}(f(z))-\widetilde{H}(z)=\delta(z).
$$
By analytic change of variables $w=\delta(z)$ and multiplying by $(-1)$, we get
$$
-\widetilde H\circ\delta^{-1}(\delta\circ f\circ \delta^{-1}(w))-(-\widetilde H\circ\delta^{-1})(w)=-w.
$$
Therefore, $-(\widetilde{H}\circ\delta^{-1})_{\pm}(z)=-(\widetilde{H}_{\pm}\circ\delta^{-1})(z)$, $z\in V_\pm$ ($V_\pm$ being in fact $\delta(V_\pm)$, but identified with $V_\pm$ since $\delta$ is a conformal map tangent to the identity) are solutions of $1$-Abel equation for diffeomorphism $g=\delta\circ f\circ \delta^{-1}$, analytically conjugated to $f$. The former equality on petals holds by formulas from Proposition~\ref{formal} applied to both cohomological equations, since $\delta$ is an analytic change of variables. Furthermore, by \eqref{jen} and \eqref{dva},
\begin{align*}
-(\widetilde H_+&\circ\delta^{-1})(z)+(\widetilde H_-\circ\delta^{-1})(z)=T_\infty(\delta^{-1}(z))=\\
&=g_1(e^{2\pi i \Psi_+^{f}\circ \delta^{-1}(z)})=g_1(e^{2\pi i \Psi_+^{g}}(z)),\ z\in V^{up},\\
-(\widetilde H_-&\circ\delta^{-1})(z)+(\widetilde H_+\circ\delta^{-1})(z)=-2\pi i+T_0(\delta^{-1}(z))=\\
&=-2\pi i+g_2(e^{-2\pi i \Psi_+^{f}\circ \delta^{-1}(z)})=-2\pi i+g_2(e^{-2\pi i \Psi_+^{g}}(z)),\ z\in V^{low}.
\end{align*}
Here, $\Psi_+^{g}(z)=\Psi_+^f\circ\delta^{-1}$ is a trivialisation function for $g$, for an appropriate choice of constant term, see Remark~\ref{idiot} below.
\smallskip

Thus, the cocycle $(g_1,g_2)$ is realized as $1$-moment of the diffeomorphism $g$, analytically conjugated to $f$. 
$\hfill\Box$

\bigskip
We pose the question of \emph{injectivity} in Theorem~\ref{bijectiveness}. That is, if inside each analytic class there exist different diffeomorphisms with the same $1$-moments. We show in the next Proposition~\ref{formclas} that the injectivity is not true. Inside the trivial analytic class, we even characterize the diffeomorphisms that have the same $1$-moments in Proposition~\ref{chartriv}. 

\begin{proposition}[Non-injectivity]\label{formclas}
Let $[f]_0$ be any analytic class. Let $f,\ g\in [f]_0$. If there exists a change of variables $\varphi\in z+z^2\mathbb{C}\{z\}$ conjugating $f$ to $g$, $g=\varphi^{-1}\circ f\circ \varphi$, of the form
\begin{equation}\label{classi}
\varphi^{-1}=\text{Id}+r\circ f-r,
\end{equation}
where $r\in\mathbb{C}\{z\}$ is analytic, then $f$ and $g$ have the same $1$-moments. 
\end{proposition}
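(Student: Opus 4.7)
The plan is to transport sectorial solutions of the $1$-Abel equation from $f$ to $g$ via $\varphi$, and then use the coboundary structure $\varphi^{-1}-\text{Id}=r\circ f-r$ to absorb the discrepancy between the transported equation and the actual $1$-Abel equation for $g$.

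First, I will start from the unique sectorial solutions without constant term $H_\pm^f$ of $H(f(z))-H(z)=-z$ on the petals $V_\pm$ (Proposition~\ref{formal}) and form $K_\pm:=H_\pm^f\circ\varphi$. Since $g=\varphi^{-1}\circ f\circ\varphi$ is equivalent to $f\circ\varphi=\varphi\circ g$, a direct substitution gives
$$
K_\pm(g(z))-K_\pm(z)=H_\pm^f(f(\varphi(z)))-H_\pm^f(\varphi(z))=-\varphi(z),
$$
which is the $1$-Abel equation for $g$ but with a shifted right-hand side. The remaining discrepancy is $z-\varphi(z)$.

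The key observation is that the hypothesis on $\varphi$ forces $\varphi(z)-z$ to be a $g$-coboundary after pullback. Setting $s:=r\circ\varphi$ and substituting $z=\varphi(u)$ in the identity $\varphi^{-1}(z)-z=r(f(z))-r(z)$, together with $f\circ\varphi=\varphi\circ g$, I expect to obtain
$$
\varphi(u)-u=s(u)-s(g(u)).
$$
Consequently, $\tilde H_\pm:=(H_\pm^f-r)\circ\varphi$ will satisfy $\tilde H_\pm(g(z))-\tilde H_\pm(z)=-z$ on the petals of $g$ (which coincide with those of $f$ up to the tangent-to-identity map $\varphi$), and hence, by the uniqueness part of Proposition~\ref{formal}, coincide up to constants with the unique sectorial solutions $H_\pm^g$ without constant term of the $1$-Abel equation for $g$.

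To conclude, I will subtract on the intersections $V^{up}$ and $V^{low}$ to get
$$
\tilde H_+(z)-\tilde H_-(z)=(H_+^f-H_-^f)(\varphi(z)),
$$
since the analytic correction $r\circ\varphi$ is single-valued and cancels. For the trivializations, $\Psi_+^f\circ\varphi$ is a Fatou coordinate for $g$ by a one-line check, so by uniqueness $\Psi_+^g=\Psi_+^f\circ\varphi+c$ for some $c\in\mathbb{C}$. Lifting the $1$-cocycle $\tilde H_+-\tilde H_-$ to the orbit space of $g$ via $\Psi_+^g$ therefore recovers the corresponding cocycle of $f$ composed with the rescaling $t\mapsto e^{2\pi ic}t$, which is exactly the identification \eqref{idii} with $b=e^{-2\pi ic}$ and $a=0$. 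Hence the $1$-moments of $f$ and $g$ agree.

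The main conceptual step is recognizing the hypothesis on $\varphi$ as a coboundary condition that exactly kills the defect $\varphi(z)-z$ after pullback through $\varphi$. The bookkeeping obstacle is then to handle correctly the logarithmic terms in $H_\pm\in-\log z+z\mathbb{C}[[z]]$ (following Remark~\ref{ovaj}, by working with $R_\pm=H_\pm+\log z$ so that the modified right-hand sides are analytic) and the additive constants in $\Psi_+^g$ versus $\Psi_+^f\circ\varphi$, in order to land on the identification \eqref{idii} rather than on literal equality of the lifted germs.
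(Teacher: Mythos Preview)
Your proof is correct and follows essentially the same strategy as the paper's: the only difference is the direction of transport, since the paper composes $\widehat H^g$ with $\varphi^{-1}$ and adds $r$ to recover $\widehat H^f$ (up to a constant), whereas you compose $H_\pm^f$ with $\varphi$ and subtract $r\circ\varphi$ to obtain $H_\pm^g$. These are symmetric manipulations of the same coboundary identity, and both conclude via the uniqueness in Proposition~\ref{formal} together with the freedom of additive constant in $\Psi_+$ (the paper invokes Remark~\ref{idiot} for this last point).
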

Note that for $f$ and $g$ belonging to the same analytic class, their formal conjugacy is not unique. See Remark~\ref{idiot} for understanding of all formal changes conjugating $g$ to $f$. Note that at least one formal change conjugating $g$ to $f$ is analytic, but \emph{not every other formal change of variables is necessarily analytic}. 
\medskip

In the trivial analytic class, we can get even stronger equivalence statement:
\begin{proposition}[Characterization of germs in trivial analytic class with the same $1$-moments]\label{chartriv}
Let $f,\ g\in [f_0]_0$ be analytically conjugated to $f_0$. $f$ and $g$ have the same $1$-moments if and only if there exists a change of variables $\varphi\in z+\mathbb{C}\{z\}$ conjugating $f$ to $g$ of the form
\begin{equation*}
\varphi^{-1}=\text{Id}+r\circ f-r,
\end{equation*}
where $r\in\mathbb{C}\{z\}$ is analytic.
\end{proposition}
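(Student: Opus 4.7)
The direction $(\Leftarrow)$ is precisely Proposition~\ref{formclas}, so the task is to prove $(\Rightarrow)$. My plan is to start with an arbitrary analytic conjugacy $\psi_0$ from $f$ to $g$, renormalize it using the analytic centralizer of $g$, translate the hypothesis that the $1$-moments agree into the vanishing of a $1$-cocycle, and then extract the desired function $r$ from the resulting globally analytic solution.

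Pick any $\psi_0\in z+z^2\mathbb{C}\{z\}$ with $g=\psi_0^{-1}\circ f\circ\psi_0$, which exists because $f,g\in[f_0]_0$. Since $g$ is analytically conjugate to $f_0$, it embeds in a holomorphic flow $\{g^{\circ t}\}_{t\in\mathbb{C}}$, and one checks that $\Psi_+^f\circ\psi_0$ is a Fatou coordinate for $g$, hence differs from $\Psi_+^g$ by a constant $a\in\mathbb{C}$. I replace $\psi_0$ by $\psi:=\psi_0\circ g^{\circ(-a)}$ to achieve the normalization $\Psi_+^f\circ\psi=\Psi_+^g$. A direct computation using $\psi\circ g=f\circ\psi$ together with the $1$-Abel equations $H_\pm^f\circ f-H_\pm^f=-\text{Id}$ and $H_\pm^g\circ g-H_\pm^g=-\text{Id}$ then shows that the sectorial functions $\tilde H_\pm:=H_\pm^f\circ\psi-H_\pm^g$ on the petals $V_\pm^g$ satisfy the cohomological equation
$$\tilde H_\pm\circ g-\tilde H_\pm=\text{Id}-\psi,$$
whose right-hand side lies in $z^2\mathbb{C}\{z\}$ since $\psi\in z+z^2\mathbb{C}\{z\}$.

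Next I translate the equality of $1$-moments into vanishing of the cocycle $\tilde H_+-\tilde H_-$. With my normalization $\Psi_+^f\circ\psi=\Psi_+^g$, the scaling parameter in \eqref{idii} reduces to $b=1$, so the hypothesis reads $g_\infty^f=g_\infty^g+a'$ and $g_0^f=g_0^g-a'$ for some $a'\in\mathbb{C}$. Using \eqref{baz}, on $V^{up}$ the cocycle equals $(g_\infty^f-g_\infty^g)(e^{-2\pi i\Psi_+^g})=a'$, and on $V^{low}$ the $-2\pi i$ contributions from \eqref{baz} cancel against each other and one again obtains the same constant $a'$. Shifting $H_+^f$ by the additive constant $-a'$ (a freedom in the sectorial solutions corresponding exactly to \eqref{idii}) forces $\tilde H_+-\tilde H_-\equiv 0$ on both components of $V^{up}\cup V^{low}$. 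Hence by Riemann's theorem on removable singularities the two branches $\tilde H_+$ and $\tilde H_-$ glue to an analytic germ $\tilde r$ at $0$ satisfying $\tilde r\circ g-\tilde r=\text{Id}-\psi$.

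To finish, I set $r:=\tilde r\circ\psi^{-1}\in\mathbb{C}\{z\}$. Using the conjugacy identity $g\circ\psi^{-1}=\psi^{-1}\circ f$, the equation for $\tilde r$ transforms, after the substitution $w=\psi(z)$, into
$$r(f(w))-r(w)=\tilde r(\psi^{-1}(f(w)))-\tilde r(\psi^{-1}(w))=\psi^{-1}(w)-w,$$
which reads $\psi^{-1}=\text{Id}+r\circ f-r$; taking $\varphi:=\psi$ delivers the desired conjugacy. The principal obstacle is the cocycle-vanishing step in the third paragraph: the identification \eqref{idii} involves both a multiplicative parameter in the orbit variable and opposite additive constants in the two sectorial solutions, and one must carefully align the flow parameter in $\psi$, the additive constants in the Fatou coordinates, and the additive constants in $H_\pm^f, H_\pm^g$ so that the intermediate cocycle literally vanishes rather than merely representing the trivial equivalence class.
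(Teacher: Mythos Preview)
Your proof is correct and follows essentially the same strategy as the paper's: align the Fatou coordinates via an analytic conjugacy (the paper invokes Remark~\ref{idiot} to obtain $\Psi^g=\Psi^f\circ\varphi$ with $\varphi$ analytic, you use the holomorphic flow of $g$ to the same effect), show that the difference of pulled-back sectorial solutions realizes the trivial cocycle, and then extract the analytic germ $r$ (the paper cites Ramis--Sibuya, you invoke Riemann's removable singularity theorem directly, which suffices since $\tilde H_\pm$ are bounded near $0$). The paper works with $R_\pm=H_\pm+\log z$ to sidestep the logarithmic branch, whereas you track the $2\pi i$ cancellation on $V^{low}$ explicitly; both routes are equivalent.
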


\begin{remark}[About Propositions~\ref{formclas} and \ref{chartriv}]\
\begin{enumerate}
\item The accent in the propositions is on $r$ being \emph{globally} analytic. Indeed, for any change of variables $\varphi^{-1}(z)$, there exists a sectorially analytic function $r$ such that \eqref{classi} holds, since it can be rewritten as the cohomological equation \begin{equation}\label{ti}r(f(z))-r(z)=\varphi^{-1}(z)-z.\end{equation}  However, \emph{good} changes are only those $\varphi$ for which equation \eqref{ti} with right-hand side $(\varphi^{-1}-\text{Id})$ has globally analytic solution.
\smallskip

\item The propositions are \emph{constructive}. Using \eqref{classi}, for every germ $f$ we can construct infinitely many germs inside its analytic class, such that they all belong to the same $1$-conjugacy class.
\smallskip

\item The question remains if Proposition~\ref{chartriv} is true in all analytic classes, not only in trivial class. There seems to be a technical obstacle in the proof, which we do not know how to bypass.
\end{enumerate}
\end{remark}

\smallskip
For the proof of propositions, we need the following known result from e.g. \cite{ilya}. If two germs are formally conjugated, their formal conjugacy is \emph{not} unique, along the same lines as their formal trivialisations are unique only up to an additive constant. Remark~\ref{idiot} provides the description of all formal conjugacies between them.
\begin{remark}[Non-uniqueness of formal conjugation of germs, Theorem 21.12 from \cite{ilya}]\label{idiot}
Let $f$ be formally conjugated to $f_0$. The formal conjugacy $\widehat\varphi$ is unique up to precomposition by germs $f_c\in z+z^2\mathbb{C}\{z\}$ of the form $$f_c(z)=\frac{z}{1-cz},\quad c\in\mathbb{C}.$$ This sole freedom of choice corresponds to adding a constant term $c\in\mathbb{C}$ in trivialisation series $\widehat\Psi^f$ of $f$, related to the conjugacy $\widehat\varphi$ by $\widehat\Psi^f=\Psi_0\circ\widehat\varphi,\ \Psi_0(z)=-1/z$. 

Let two germs $f$ and $g$ be formally conjugated. For any choice of their formal trivialisations $\widehat{\Psi}^f(z)$ and $\widehat{\Psi}^g(z)$ (that is, for any choice of constant terms), there exists a formal conjugation $\widehat\varphi\in z+z^2\mathbb{C}[[z]]$ conjugating $f$ to $g$, such that  
\begin{equation}\label{sto}
g=\widehat\varphi^{-1}\circ f\circ \widehat\varphi,\text{ \ and \ \ }\widehat\Psi^g=\widehat \Psi^f\circ \widehat \varphi.
\end{equation}
Also, for any formal conjugation $\widehat\varphi(z)\in z+z^2\mathbb{C}[[z]]$ there exist trivialisations  $\widehat{\Psi}^f(z)$ and $\widehat{\Psi}^g(z)$ such that \eqref{sto} holds. All possible choices of constants in trivialisation series result in all possible conjugacies $\widehat \varphi(z)$ conjugating $f$ and $g$.
\end{remark}

\noindent \emph{Proof of Propositions~\ref{formclas} and \ref{chartriv}}.\

We first prove the implication of Proposition~\ref{chartriv} that holds only for germs in the trivial analytic class. Let $f$ and $g$ be two germs analytically conjugated to $f_0$. By the first part of Remark~\ref{idiot}, we conclude that any formal conjugacy between $f$ and $g$  is necessarily analytic. This property of trivial analytic class that is not satisfied for other analytic classes. Due to this, we cannot carry out the same proof for other analytic classes. Suppose that $f$ and $g$ have the same $1$-moment, $(g_1,g_2)$. Since $1$-moments are determined only up to identifications \eqref{idii}, we can choose constants in trivialisations $\Psi^f$ and $\Psi^g$ such that the moments are exactly the same. Here, we neglect the possible constant term in $1$-moments, simply choosing the same constant term in $H_+^f$ and $H_-^f$ and $H_+^g$ and $H_-^g$.
Put $R_\pm^{f,g}(z)=H_\pm^{f,g}(z)+\log(z)$, as in Remark~\ref{ovaj}. Then
\begin{align*}
&R^f_+(z)-R^f_-(z)=g_1(e^{2\pi i\Psi^f(z)}),\\
&R^g_+(z)-R^g_-(z)=g_1(e^{2\pi i\Psi^g(z)}),\ z\in V^{up}.
\end{align*}
The same holds with $g_2$ on $V^{low}$. By Remark~\ref{idiot}, for the choice of trivialisations $\Psi^f$ and $\Psi^g$, there exists an analytic change of variables tangent to the identity $\varphi$, such that $\Psi^g=\Psi^f\circ \varphi$ and $g=\varphi^{-1}\circ f\circ \varphi$. We therefore get
\begin{align*}
&R^f_+(z)-R^f_-(z)=g_1(e^{2\pi i\Psi^f(z)}),\\
&R^g_+\circ \varphi^{-1}(z)-R^g_-\circ \varphi^{-1}(z)=g_1(e^{2\pi i\Psi^f(z)}),\ z\in V^{up}.
\end{align*}
Similarly on $V^{low}$. The two formal series $\widehat{R}^f$ and $\widehat{R}^g\circ \varphi^{-1}$ thus realize the same $1$-cocycle on $V^{up}\times V^{low}$. By Ramis-Sibuya theorem, see \cite[Th\' eor\` eme]{loraypre}, they can differ only by converging series $r_1\in \mathbb{C}\{z\}$,
\begin{equation*}
\widehat{R}^g\circ \varphi^{-1}=\widehat{R}^f+r_1.
\end{equation*}
We then have 
\begin{equation}\label{srr}
\widehat{H}^g\circ \varphi^{-1}=\widehat{H}^f+r,
\end{equation}
for $r(z)=r_1(z)-\log(\varphi^{-1}(z)/z),\ r\in\mathbb{C}\{z\}$.
\medskip

\noindent Putting \eqref{srr} in equation $\widehat H^g\circ\varphi^{-1}\circ f-\widehat H^g\circ\varphi^{-1}=-\varphi^{-1}$, obtained from $1$-Abel equation for $g$ after change of variables, we finally get
$$
-id=\widehat H^f\circ f-\widehat H^f=-\varphi^{-1}-r\circ f+r.
$$

We now prove the converse for diffeomorphisms in any analytic class. Let $f$ and $g$ belong to any analytic class, $f,\ g\in[f]_0$. Suppose $g=\varphi^{-1}\circ f\circ \varphi$, for some $\varphi\in z+z^2\mathbb{C}\{z\}$, and suppose that there exists $r\in\mathbb{C}\{z\}$ such that $\varphi^{-1}=\text{Id}+r\circ f-r$. Let $(g_1^{f},g_2^{f})$ and $(g_1^{g},g_2^{g})$ denote $1$-moments for $f$ and $g$ respectively. We will prove that they coincide, up to identifications \eqref{idii}.
From $1$-Abel equation for $g$, after the change of variables and then using \eqref{classi}, we get
$$
\big(H^g\circ\varphi^{-1}+r\big)\circ f-\big(H^g\circ\varphi^{-1}+r\big)=-\text{Id}.
$$
By uniqueness of the formal solution of $1$-Abel equation for $f$ up to a constant term $C\in\mathbb{C}$, we get
\begin{equation}\label{jaja}
\widehat{H}^f=\widehat H^g\circ\varphi^{-1}+r+C.
\end{equation}
Since $z\mapsto r(z)+C$ is analytic, from \eqref{jaja}, we have that (up to constant term from the choice of sectorial solutions)
\begin{equation}\label{t1}
H^f_+(z)-H^f_-(z)=(H^g_+-H^g_-)\circ\varphi^{-1}(z),\ z\in V^{up}\cup V^{low}.
\end{equation}
By Remark~\ref{idiot}, for conjugation $\varphi$ above, there exists a choice of trivialisations (athat is, of constant terms) $\Psi_+^f$ and $\Psi_+^g$, such that $\Psi_+^g=\Psi_+^f\circ\varphi$. Then, for $1$-moments with respect to these trivialisations, the following property holds:
\begin{align}\label{t2}
&H^f_+(z)-H^f_-(z)=g_1^f(e^{2\pi i \Psi_+^f(z)}),\\
&H^g_+\circ\varphi^{-1}(z)-H^g_-\circ\varphi^{-1}(z)=g_1^g(e^{2\pi i \Psi_+^f(z)}),\ z\in V^{up}.\nonumber
\end{align}
By \eqref{t1} and \eqref{t2}, and repeating the same procedure for $g_2^{f,g}$ on $V^{low}$, we get that the $1$-moments coincide (up to identifications \eqref{idii}).
$\hfill\Box$
\medskip

\medskip
We finish the section with comment about relative positions of analytic and $1$-conjugacy classes.
Proposition~\ref{chartriv} and Theorem~\ref{bijectiveness} put together, we conclude that the appropriately quotiented trivial analytic class in fact \emph{parametrizes} the set of all $1$-conjugacy classes. By Theorem~\ref{bijectiveness} and Proposition~\ref{formclas}, we see that analytic classes and $1$-conjugacy classes lie in \emph{transversal position}. Each analytic class spreads through all $1$-conjugacy classes. Each $1$-conjugacy class spreads through all analytic classes, such that in each analytic class it has infinitely many representatives. In particular, in model analytic class $\mathcal{C}_0$ there exist diffeomorphisms from all $1$-conjugacy classes, and in trivial $1$-conjugacy class $\mathcal{S}$ there exist diffeomorphisms from all analytic classes. This is consistent with Example~\ref{ex2}. All this supports our observation from Section~\ref{fourhalf} that analytic class cannot be read only from differences of sectorial solutions of $1$-Abel equation on intersections of petals.

\smallskip
We finish the section by noting that the same classification analysis could have been performed considering the moments with respect to trivializations $\Psi_-^{-1}$ on negative instead on positive sectors.

\subsection{Reconstruction of the analytic classes from the $1$-conjugacy classes with respect to trivializations of both sectors.}\label{seventhree}\

We have seen in Subsection~\ref{sevenone} that we cannot read the analytic classes from the $1$-conjugacy classes with respect to positive trivialisations (or with respect to negative trivialisations). Nevertheless, by comparing the $1$-conjugacy classes of a diffeomorphism \emph{with respect to both sectorial trivializations}, in cases where the $1$-moments are invertible, we can read the analytic class. This is nothing unexpected, since comparing the sectorial trivializations themselves reveals the analytic class.

Let us recall the standard definition of \emph{\' Ecalle-Voronin moduli of analytic classification} from \cite{ecalle}, \cite{voronin} or \cite{loray},\ \cite{dudko}. For germ $f$ inside the model formal class, its analytic class is given by pair of diffeomorphisms $(\varphi_0,\varphi_\infty)\in \text{Diff}(\overline{\mathbb{C}},0)\times \text{Diff}(\overline{\mathbb{C}},\infty)$, after identifications up to post and premultiplication of both germs by arbitrary nonzero constants, corresponding to different choices of constants in sectorial trivialisations:
\begin{align}\label{mod}
\varphi_0(t)&=e^{{-2\pi i}\Psi_-\circ (\Psi_+)^{-1}\left(-\frac{Log t}{2\pi i}\right)},\ \ t\approx 0,\\
\varphi_\infty(t)&=e^{{-2\pi i}\Psi_-\circ (\Psi_+)^{-1}\left(-\frac{Log t}{2\pi i}\right)},\ t\approx\infty.\nonumber
\end{align}

Here, $\Psi_+$ and $\Psi_-$ are sectorial trivialisation functions for $f$ (solutions of Abel equation for $f$), unique up to additive constant.

 We denote by $\big(g_\infty^+,g_0^+\big)$ its $1$-moment with respect to trivialisations of the attracting sector, and by  $\big(g_\infty^-,g_0^-\big)$ its $1$-moment with respect to trivialisations of the repelling sector. 
\begin{proposition}[Ecalle-Voronin moduli expressed using $1$-moments with respect to both trivialisations]
If all $1$-moment components $g_{\infty,0}^{\pm}$ are \emph{invertible} at zero, the \' Ecalle-Voronin moduli \eqref{mod} correspond to compositions
\begin{align*}
\varphi_0(t)&=(g_{0}^-)^{-1}\circ g_0^+(t),\ t\approx 0,\\
\varphi_\infty(t)&=(g_{\infty}^-\circ \tau)^{-1}\circ (g_\infty^+\circ\tau)(t),\ \ t\approx \infty.\nonumber
\end{align*} Here $\tau(t)=1/t$ denotes the inversion.
\end{proposition}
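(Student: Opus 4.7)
The plan is to derive both formulas directly from the definitions of the $1$-moments and of the \'Ecalle–Voronin moduli, recognizing that the claimed compositions are forced by the fact that each component of the cocycle is expressed simultaneously through $\Psi_+$ and through $\Psi_-$ on the same intersection component.

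First I would unpack the definitions. On $V^{up}$, the difference $h(z)=H_+(z)-H_-(z)$ of sectorial solutions of the $1$-Abel equation has two lifts to orbit space, according to the chosen trivialisation:
\[
h(z)=g_\infty^+\bigl(e^{-2\pi i\Psi_+(z)}\bigr)=g_\infty^-\bigl(e^{-2\pi i\Psi_-(z)}\bigr).
\]
Setting $t=e^{-2\pi i\Psi_+(z)}$ (which tends to $\infty$ as $z\to 0$ inside $V^{up}$, so $\Psi_+(z)=-\mathrm{Log}(t)/(2\pi i)$ for the appropriate branch), the second argument becomes
\[
e^{-2\pi i\Psi_-(z)}=e^{-2\pi i\,\Psi_-\circ\Psi_+^{-1}(-\mathrm{Log}(t)/(2\pi i))}=\varphi_\infty(t),
\]
by the very definition \eqref{mod} of the \'Ecalle–Voronin modulus at $\infty$. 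Hence on a punctured neighborhood of $t=\infty$ we have the identity $g_\infty^+(t)=g_\infty^-(\varphi_\infty(t))$. Using the assumed invertibility of $g_\infty^-$ at $\infty$ (equivalently, of $g_\infty^-\circ\tau$ at $0$), I solve to get $\varphi_\infty=(g_\infty^-)^{-1}\circ g_\infty^+$ at $\infty$, and then conjugate by $\tau$ to read the horn map as a germ at $0$: $\tau\circ\varphi_\infty\circ\tau=(g_\infty^-\circ\tau)^{-1}\circ(g_\infty^+\circ\tau)$, which is the claimed formula under the standing convention that identifies $\varphi_\infty$ with its $\tau$-transport.

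The case of $\varphi_0$ on $V^{low}$ is essentially identical, with one small twist. The cocycle on $V^{low}$ is $k(z)=H_-(z)-H_+(z)$, represented as $-2\pi i+g_0^{\pm}\bigl(e^{-2\pi i\Psi_\pm(z)}\bigr)$ according to \eqref{baz}. The constant $-2\pi i$ (coming from the two branches of logarithm in the formal solutions $H_\pm$, see Remark~\ref{ovaj}) occurs on both sides of the equality and therefore cancels, leaving $g_0^+(t)=g_0^-(\varphi_0(t))$ near $t=0$, where now $t=e^{-2\pi i\Psi_+(z)}\to 0$ as $z\to 0$ in $V^{low}$, and $\varphi_0(t)=e^{-2\pi i\Psi_-(z)}$ is the horn map at $0$. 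Invertibility of $g_0^-$ at $0$ then yields $\varphi_0=(g_0^-)^{-1}\circ g_0^+$, which is the second formula.

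The main obstacle I expect is bookkeeping rather than mathematical: making sure the branch of $\mathrm{Log}$ implicit in \eqref{mod} is the one matching the orientation of $V^{up}$ (respectively $V^{low}$), and verifying that the identifications \eqref{idii} on $1$-moments correspond precisely to the standard identifications of \'Ecalle–Voronin moduli by pre- and post-composition with constants. The additive freedom $a\in\mathbb{C}$ in \eqref{idii} reflects the freedom to shift $H_\pm$ by a constant, while the multiplicative factor $b\in\mathbb{C}^*$ corresponds to adding a constant to $\Psi_\pm$; both are absorbed harmlessly on each side of $\varphi_0=(g_0^-)^{-1}\circ g_0^+$ and of $\tau\circ\varphi_\infty\circ\tau=(g_\infty^-\circ\tau)^{-1}\circ(g_\infty^+\circ\tau)$, so the formulas are well defined as identifications of germs modulo the appropriate equivalence.
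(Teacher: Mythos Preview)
Your proposal is correct and follows essentially the same approach as the paper: equate the two representations of the cocycle on each intersection component via the trivialisations $\Psi_+$ and $\Psi_-$, recognize $e^{-2\pi i\Psi_-}$ as $\varphi_{0,\infty}(e^{-2\pi i\Psi_+})$ from the definition of the moduli, and invert. The paper's proof is a two-line version of exactly this computation, omitting the bookkeeping on branches, the $\tau$-conjugation, and the compatibility with the identifications \eqref{idii} that you spell out.
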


\begin{proof} By definition of $1$-moments, it holds that
\begin{align*}
H_+(z)-H_-(z)&=g_\infty^+(e^{2\pi i \Psi_+(z)})=g_\infty^-(e^{2\pi i \Psi_-(z)}),\ z\in V^{up},\\
H_-(z)-H_+(z)&=-2\pi i+g_0^+(e^{-2\pi i\Psi_+(z)})=-2\pi i+g_0^-(e^{-2\pi i\Psi_-(z)}),\\
&\hspace{7cm} z\in V^{low}.
\end{align*}
The statement now follows directly from definition of moduli \eqref{mod}. 
\end{proof}

We address the issue of \emph{invertibility} of $g_{\infty,0}^\pm$. It relies on nontriviality of $1$-moments in both components.
Suppose 
\begin{equation*}
H_+-H_-\equiv\!\!\!\!\!/\  0\ \text{\ on\ } V^{up},\ \ \text{ and }\quad H_+-H_-\equiv\!\!\!\!\!\!/\ 2\pi i\ \text{\ on\ } V^{low}.
\end{equation*}
It can be easily seen that the germs $g_{\infty,0}^\pm$ are either diffeomorphisms or have finitely many analytic (except at zero) inverses. One of the analytic inverses gives moduli. For example, if we choose trivialisations $\Psi_+$ and $\Psi_-$ with the same constant term, then $\varphi_0$ and $\varphi_\infty$ from \eqref{mod} are \emph{tangent to the identity}, so we know which inverse to choose.

On the other hand, if the difference $H_+-H_-$ is trivial on either $V^{up}$ or on $V^{low}$, then the moduli cannot be reconstructed using the differences $H_+-H_-$ on $V^{up}\cup V^{low}$ in the above manner, and the analytic class cannot be reconstructed.

\section{Prospects}\label{five}
\subsection{Can we recognize a diffeomorphism using directed areas of $\varepsilon$-neighborhoods of \emph{only one} orbit?}\label{fiveone}\ 

We exploit ideas from the proof of Proposition~\ref{accu} to prove that a germ $f$ is uniquely determined by function $\varepsilon\mapsto A^{\mathbb{C}}(z_0,\varepsilon)$, $\varepsilon\in(0,\varepsilon_0)$, where $\varepsilon_0>0$ is arbitrary small. Note that $z_0\in V_+$ is fixed and this function is realized using \emph{only one orbit}. This result suggests that one orbit should be enough to read the analytic class. Therefore, it should suffice to fix $z$ and regard $A^\mathbb{C}(z,\varepsilon)$ as function of $\varepsilon$ only. However, how this can be done remains open and subject to further research. Note that this is a different approach to the problem; in the article, we have been considering and comparing sectorial functions, derived from $A^\mathbb{C}(z,\varepsilon)$ with respect to variable $z$. 

Let $\text{Diff\,}(\mathbb{C},0;z_0)\subset \text{Diff\,}(\mathbb{C},0)$ denote the set of all parabolic germs whose basin of attraction contains $z_0$.
\begin{proposition}\label{undet}
Let $z_0\in V_+$ be fixed. Let $\varepsilon_0>0$. The mapping
$$
f\in \text{Diff\,}(\mathbb{C},0; z_0)\longmapsto \big(\varepsilon\mapsto A^\mathbb{C}(z_0,\varepsilon),\ \varepsilon\in(0,\varepsilon_0)\big)
$$
is injective on the set $\text{Diff\,}(\mathbb{C},0; z_0)$.  
\end{proposition}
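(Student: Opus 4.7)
The plan is to extract from the function $\varepsilon \mapsto A^\mathbb{C}(z_0, \varepsilon)$ on $(0, \varepsilon_0)$ enough initial segment of the orbit $\{z_n = f^{\circ n}(z_0)\}_{n \geq 0}$ to invoke the identity theorem for holomorphic germs: once we know $f(z_n) = z_{n+1}$ on any set $\{z_n\}_{n \geq N}$ that accumulates at the parabolic fixed point $0$, the germ $f$ is uniquely determined. Thus the proposition reduces to showing that the data $\varepsilon \mapsto A^\mathbb{C}(z_0, \varepsilon)$ determines $\{z_n\}$ past some index.

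First I would intrinsically identify, from the data, the sequence of critical radii $\varepsilon_n = |z_n - z_{n+1}|/2$: by Proposition~\ref{accu}, $\varepsilon \mapsto A^\mathbb{C}(z_0, \varepsilon)$ is $C^\infty$ on each subinterval $(\varepsilon_{n+1}, \varepsilon_n)$, and its right second derivative blows up precisely at each $\varepsilon_n$, so $\{\varepsilon_n : \varepsilon_n < \varepsilon_0\}$ is recovered as the discrete set of jump locations, strictly decreasing and accumulating at $0$.

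The second, core, step is a purely local analysis at each $\varepsilon_n$. As $\varepsilon$ crosses $\varepsilon_n$ from below, the sole new geometric event in the $\varepsilon$-neighborhood of the orbit is the appearance of the lens $L_n(\varepsilon) := D_\varepsilon(z_n) \cap D_\varepsilon(z_{n+1})$; all other pairwise overlaps either are already present for $\varepsilon$ slightly below $\varepsilon_n$ or remain absent for $\varepsilon$ slightly above, and they vary real-analytically across $\varepsilon_n$. Because $L_n(\varepsilon)$ is symmetric with respect to the perpendicular bisector of the segment $[z_n, z_{n+1}]$, its centroid equals the midpoint $m_n := (z_n + z_{n+1})/2$, so inclusion-exclusion yields that $A^\mathbb{C}(z_0, \varepsilon)$, for $\varepsilon$ slightly above $\varepsilon_n$, differs from its smooth extension from the left by
\begin{equation*}
-A(L_n(\varepsilon)) \cdot m_n,
\end{equation*}
where the standard two-disc area formula gives $A(L_n(\varepsilon)) = \tfrac{8\sqrt{2}}{3}\varepsilon_n^{1/2}(\varepsilon - \varepsilon_n)^{3/2} + O((\varepsilon-\varepsilon_n)^{5/2})$. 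Extracting the leading $(\varepsilon-\varepsilon_n)^{3/2}$ coefficient of the right-hand jump therefore recovers $m_n$ as a complex number, and combined with the distance $|z_n - z_{n+1}| = 2\varepsilon_n$ from Step 1 it determines the unordered pair $\{z_n, z_{n+1}\}$. The individual iterates are then pinned down by matching consecutive pairs: $z_{n+1}$ is the unique common element of $\{z_n, z_{n+1}\}$ and $\{z_{n+1}, z_{n+2}\}$, using that the parabolic orbit is injective so $z_n \neq z_{n+2}$.

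The main technical obstacle is the assertion that $L_n$ is \emph{the only} new geometric event at $\varepsilon_n$: one must exclude that some higher pair $D_\varepsilon(z_n) \cap D_\varepsilon(z_{n+k})$ with $k \geq 2$, or a triple intersection, starts contributing at the same radius $\varepsilon_n$ and pollutes the $(\varepsilon-\varepsilon_n)^{3/2}$ coefficient. This reduces to strict inequalities of the form $|z_n - z_{n+k}| > 2\varepsilon_n$ for $k \geq 2$, which express that the attracting orbit does not fold back on itself as it approaches $0$; they can be made quantitative using the standard expansion $f^{\circ n}(z_0) \sim (-ka_1)^{-1/k}n^{-1/k}$ recalled in Section~\ref{one} together with monotonicity of $\varepsilon_n$. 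Once this local input is in place, Steps 1--2 produce the orbit $\{z_n\}_{n\geq N}$ for $N$ large enough that $\varepsilon_N < \varepsilon_0$, and the identity theorem for holomorphic germs concludes that $f$ is uniquely determined by $\varepsilon \mapsto A^\mathbb{C}(z_0, \varepsilon)$.
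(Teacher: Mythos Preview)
Your overall strategy coincides with the paper's: locate the singular radii $\varepsilon_n$ from the blow-up of the second derivative, read off the midpoints $m_n=(z_n+z_{n+1})/2$ from the coefficient of the singular part, and then invoke the identity theorem once a tail of the orbit is reconstructed. The paper organizes this as a comparison argument (assume $A^{\mathbb{C},f}=A^{\mathbb{C},g}$ and subtract), pulling the explicit second-derivative formula~\eqref{druga} from the proof of Proposition~\ref{accu} instead of your lens-area expansion, but the informational content extracted at each $\varepsilon_n$ is the same: the pair $(\varepsilon_n,\ z_n+z_{n+1})$.

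There is, however, a genuine gap in your Step~2. Knowing the complex midpoint $m_n$ and the real distance $|z_n-z_{n+1}|=2\varepsilon_n$ does \emph{not} determine the unordered pair $\{z_n,z_{n+1}\}$: in the plane this data fixes only the circle $\partial D(m_n,\varepsilon_n)$ on which $z_n$ and $z_{n+1}$ sit antipodally, leaving one real parameter (the direction of $z_{n+1}-z_n$) free. Your subsequent ``match consecutive pairs'' argument therefore cannot start. The paper closes this same gap by a different route: since both candidate orbits share the directed-area function, they share the leading coefficients of \eqref{asy}, hence the formal class and in particular the attracting direction; both orbits then converge to $0$ tangent to that direction, which pins down the asymptotic argument of $z_{n+1}-z_n$ and forces the orbits (not just their midpoints and gaps) to eventually coincide. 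You need an analogous input---either the tangential convergence, or a recursion using $z_{n+1}\in\partial D(m_n,\varepsilon_n)\cap\partial D(m_{n+1},\varepsilon_{n+1})$ together with the asymptotic direction---before the identity theorem can be applied.
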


\begin{proof}
Suppose that 
$
A^{\mathbb{C},f}(z_0,\varepsilon)=A^{\mathbb{C},g}(z_0,\varepsilon),\ \varepsilon\in (0,\varepsilon_0)$, for some $f,\ g\in\text{Diff\,}(\mathbb{C},0; z_0)$.
 We show that the germs $f$ and $g$ must be equal.

Separating the tails and the nuclei and dividing by $\varepsilon^2\pi$, we get
\begin{equation}\label{sep}
\frac{A^{\mathbb{C},f}(T_\varepsilon)-A^{\mathbb{C},g}(T_\varepsilon)}{\varepsilon^2\pi}=\frac{A^{\mathbb{C},g}(N_\varepsilon)-A^{\mathbb{C},f}(N_\varepsilon)}{\varepsilon^2\pi},\ \varepsilon\in(0,\varepsilon_0).
\end{equation}
The proof relies on the presence of singularities of directed areas at $(\varepsilon_n^{f,g})$, where $(\varepsilon_n^{f,g})$ are as defined in Proposition~\ref{accu}. Let $z_n,\ n\in\mathbb{N}$, denote the points of orbit $S^f(z_0)$ of $f$ and $w_n,\ n\in\mathbb{N}$, of orbit $S^g(z_0)$ of $g$. Recall that
$$
\varepsilon_n^f=\frac{|z_{n}-z_{n+1}|}{2},\ \varepsilon_n^g=\frac{|w_{n}-w_{n+1}|}{2},\ \ n\in\mathbb{N}.
$$

Suppose that the sequences of singularities for $f$ and $g$, $(\varepsilon_n^f)$ and $(\varepsilon_n^g)$, do not eventually coincide. Then there exists $n$ arbitrary big and an interval $(\varepsilon_n^f-\delta,\varepsilon_n^f+\delta),\ \delta>0$, such that $\varepsilon_{m}^g<\varepsilon_n^f-\delta$ and $\varepsilon_n^f+\delta<\varepsilon_{m-1}^g$. 
Consider the second derivative $\frac{d^2}{d\varepsilon^2}$ of \eqref{sep} in $\varepsilon_n^f$ from the right. With all the notations and conclusions as in the proof of Proposition~\ref{accu}, from \eqref{druga}, we have
\begin{align}\label{contra}
0=&(G_{n+1}^f)^{\prime\prime}(\varepsilon_n^f+)-{(G_{m}^g)}^{\prime\prime}(\varepsilon_n^f+)+\nonumber\\  &+\frac{1}{\pi}\bigg(\frac{4\varepsilon_n^f}{\varepsilon^3}\sqrt{1-\frac{(\varepsilon_n^f)^2}{\varepsilon^2}}-\frac{2(\varepsilon_n^f)^3}{\varepsilon^5}\frac{1}{\sqrt{1-\frac{(\varepsilon_n^f)^2}{\varepsilon^2}}}\bigg)\Bigg|_{\varepsilon=\varepsilon_n^f+}\Bigg.\cdot\big(z_{n+1}+z_n).
\end{align}
Since all terms are bounded except the term in brackets and $z_n+z_{n+1}\neq 0$, \eqref{contra} leads to a contradiction. Therefore, sequences of singularities $(\varepsilon_n^f)$ and $(\varepsilon_n^g)$ eventually coincide,
$$
\varepsilon_n^f=\varepsilon_{n+k_0}^g,\ n\geq n_0,\ k_0\in\mathbb{N}.
$$

Now, considering the second derivative \eqref{sep} at the singularity $\varepsilon_n=\varepsilon_n^f=\varepsilon_{n+k_0}^g$ from the right, instead of \eqref{contra}, we have:
\begin{align*}
&0={(G_{n+1}^f)}^{\prime\prime}(\varepsilon_n+)-{(G_{n+k_0+1}^g)}^{\prime\prime}(\varepsilon_n+)+\\  &+\frac{1}{\pi}\bigg(\frac{4\varepsilon_n}{\varepsilon^3}\sqrt{1-\frac{\varepsilon_n^2}{\varepsilon^2}}-\frac{2\varepsilon_n^3}{\varepsilon^5}\frac{1}{\sqrt{1-\frac{\varepsilon_n^2}{\varepsilon^2}}}\bigg)\Bigg|_{\varepsilon=\varepsilon_n+}\cdot\hspace{-1.2cm}\cdot \Big(z_{n+1}+z_n-(w_{n+k_0+1}+w_{n+k_0})\Big).\nonumber
\end{align*}
The term in brackets is the only unbounded term, therefore 
$
(z_{n+1}+z_n)-(w_{n+k_0+1}+w_{n+k_0})=0.
$ The middle points of the orbits $S^f(z_0)$ and $S^g(z_0)$ eventually coincide. Since the distances $d_n^f=2\varepsilon_n^f$ and $d_n^g=2\varepsilon_n^g$ coincide, and since both orbits converge to some tangential direction, it is easy to see that the orbits themselves eventually coincide.

Two diffeomorphisms $f$ and $g$, both analytic at $z=0$, coincide on a set accumulating at the origin. Therefore, they must be equal. 
\end{proof}

\subsection{Application of $1$-Abel equation in analytic classification problem of two-dimensional diffeomorphisms}\label{fivetwo}\ 

This result is due to David Sauzin (personal communication). It gives a possible application of the $1$-Abel equation for a diffeomorphism $f$, along with its Abel equation, to analytic classification of two-dimensional germs derived from $f$, of rather special form: $$F(z,w)=(f(z),z+w).$$ The classification results in dimension two are scarce. This class of diffeomorphisms is not completely artificial. The natural correspondence between cohomological equations and mappings of this form was already noted in \cite[Section 3]{beli}. In \cite[Section 4]{vorgri} similar mappings are called $w$-shifts, and their analytic classification invariants are discussed. 

Let $f$ belong to the formal class of $f_0$. We consider two-dimensional germs of diffeomorphisms $F:\mathbb{C}\times \mathbb{C}\to \mathbb{C}\times \mathbb{C}$ of the type
$$
F(z,w)=(f(z),z+w).
$$
Each two-dimensional diffeomorphism of the above type can by unique formal change of variables $\Phi(z,w)\in \mathbb{C}[[z,w]]^2$ be reduced to a formal normal form of the type
$$
F_0(z,w)=(f_0(z),z+w).
$$
Here, $\mathbb{C}[[z,w]]$ denotes a $2$-dimensional formal series, without constant term.

The formal conjugation $\widehat{\Phi}(z,w)$, $F=\widehat{\Phi}^{-1}\circ F_0\circ \widehat{\Phi}$, is given by
\begin{equation}\label{conj2}
\widehat{\Phi}(z,w)=\Big(\widehat{\varphi}(z),\ \widehat{H}(z)-\widehat{H}^{f_0}\circ\widehat{\varphi}(z)+w\Big).
\end{equation}
Here, $\widehat{\varphi}$ is the formal conjugation that conjugates $f$ to $f_0$. $\widehat{H}$ is the formal solution of $1$-Abel equation for $f$ and $\widehat{H}^{f_0}$ for $f_0$.
To conclude,
$F$ is analytically conjugated to normal form $F_0$ if and only if $f$ is analytically conjugated to $f_0$ and $$H_+-H_-\equiv \left(H_+^{f_0}-H_-^{f_0}\right)\circ \varphi,\ \text{\ on\ } V^{up}\cap V^{low},$$
the latter difference for $f_0$ being known, see Example~\ref{ex1}.

\smallskip
The problem of formal conjugacy can be formulated equivalently using trivialization equation that conjugates $F$ with translation by $(1,0)$. We search for formal solutions $\widehat{T}(z,w)$ of the trivialization equation:
\begin{equation}\label{trivic}
\widehat{T}(F(z,w))=\widehat{T}(z,w)+(1,0).
\end{equation}
It can be checked that formal solution of trivialization equation \eqref{trivic} for the normal form $F_0$ is given by
\begin{equation}\label{triv0}
\widehat{T}_0(z,w)=(\Psi^{f_0}(z),\ \widehat{H}^{f_0}(z)+w).
\end{equation}
Here, $\Psi^{f_0}(z)=-1/z$. $\widehat{H}^{f_0}$ is formal solution of $1$-Abel equation for $f_0$, sectorially analytic, explicitely given in Example~\ref{ex2} in Subsection~\ref{fivesecond}.

As in 1-dimensional case, by \eqref{conj2}, \eqref{trivic} and \eqref{triv0}, we get that formal trivialization $\widehat{T}$ for diffeomorphism $F=\widehat{\Phi}^{-1}\circ F_0\circ \widehat{\Phi}$ is given by
\begin{equation}\label{trivef}
\widehat{T}(z,w)=\widehat{T}_0\left(\widehat{\Phi}(z,w)\right)=\Big(\widehat{\Psi}(z),\widehat{H}(z)+w\Big).
\end{equation}
Here, $\widehat{\Psi}$ is the formal solution of the Abel equation for $f$.

Obviously, by \eqref{trivef}, Abel equation for $f$ appears as the first coordinate and $1$-Abel equation as the second coordinate in the trivialization equation \eqref{trivic} for $(z,w)\mapsto F(z,w)=(f(z),z+w)$.

\section{Appendix}\label{six}
 
\noindent \emph{Proof of Proposition~\ref{nonasy}}.\ We show the obstacle for the existence of a full asymptotic expansion: the index $n_\varepsilon$ separating the tail and the nucleus of the $\varepsilon$-neighborhood of the orbit does not have asymptotic expansion in $\varepsilon$ after the first $k+1$ terms. 

By \cite[Lemma 1]{resman}, $n_\varepsilon$ has the following expansion, as $\varepsilon\to 0$:
\begin{equation}\label{razne}
n_\varepsilon=p_1\varepsilon^{-1+\frac{1}{k+1}}+\ldots+p_k\varepsilon^{-1+\frac{k}{k+1}}+p_{k+1}\log\varepsilon+r(z,\varepsilon),
\end{equation}
where $r(z,\varepsilon)=O(1)$ in $\varepsilon$, for $z$ fixed. We put $z$ here only to denote the dependence of the function on the initial point. Here, $z$ is only a fixed complex number. 

Suppose that the limit $\lim_{\varepsilon\to 0}r(z,\varepsilon)$ exists. Then, 
\begin{equation}\label{ff}
r(z,\varepsilon)=C(z)+o(1),\ \varepsilon\to 0\ \ \text{\ $(C$ can be 0$)$}.
\end{equation}
In the points $\varepsilon_n$ as above, it holds
$$
n(\varepsilon_n+)=n,\ n(\varepsilon_n-)=n+1.
$$
The $(k+1)$-jet of the expansion \eqref{razne} is continuous on $(0,\varepsilon_0)$. By \eqref{ff}, $r(\varepsilon_n)=C+o(1)$, as $n\to\infty$. Therefore we get that
$$
1=n(\varepsilon_n+)-n(\varepsilon_n-)=o(1),\ n\to\infty,
$$
which is a contradiction. The limit $\lim_{\varepsilon\to 0} r(z,\varepsilon)$ does not exist.

Going through the proofs of Lemmas 4 and 5 in \cite{resman} for the expansions of the areas of the tail and the nucleus, since $n_\varepsilon$ does not have expansion after the $(k+1)$-st term, we conclude that $A^\mathbb{C}(z,\varepsilon)$ in general does not have full expansion in $\varepsilon$, as stated.
$\hfill\Box$

\medskip
Before proving Proposition~\ref{accu}, we state (without proof) an auxiliary proposition that we need in the proof. 
\begin{proposition}\label{auxi}
Let all the notations be as in Subsection~\ref{threeone}. Let $\delta>0$ such that $\varepsilon_{n+1}+\delta<\varepsilon_n$. For each $n\in\mathbb{N}$, the function $H_{n+1}$,
$$
H_{n+1}(\varepsilon)=\frac{1}{\pi}\sum_{l=n+1}^{\infty}\left[\Big(\frac{\varepsilon_{l}}{\varepsilon}\sqrt{1-\frac{\varepsilon_{l}^2}{\varepsilon^2}}+\arcsin{\frac{\varepsilon_{l}}{\varepsilon}}\Big)(z_{l}+z_{l+1})\right]
$$
is a well-defined $C^\infty$-function in $\varepsilon$ on the interval $\varepsilon\in(\varepsilon_{n+1}+\delta,\varepsilon_{n-1})$. Moreover, the differentiation of the sum is performed term by term.
\end{proposition}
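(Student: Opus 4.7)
The plan is to prove Proposition~\ref{auxi} by applying the Weierstrass M-test to the series and, separately, to each of its termwise derivative series, and then invoking the standard theorem on termwise differentiation of series of $C^\infty$ functions.

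First I would record two decay estimates coming from the standard orbit asymptotics in the prenormalized case $k=1$, $a_1=1$: namely $z_l \sim -1/l$, which yields $|z_l + z_{l+1}| = O(1/l)$, and $\varepsilon_l = \tfrac{1}{2}|z_{l+1}-z_l| = O(1/l^2)$. Next I would note the uniform ratio bound: for any $\varepsilon \in (\varepsilon_{n+1}+\delta,\varepsilon_{n-1})$ and any $l \geq n+1$, one has $u_l := \varepsilon_l/\varepsilon \leq \varepsilon_{n+1}/(\varepsilon_{n+1}+\delta) =: c < 1$, so $u_l$ stays in the compact subinterval $[0,c]$ of $[0,1)$ uniformly in $l$ and $\varepsilon$ on the interval in question.

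Introduce the function $g(u) = u\sqrt{1-u^2} + \arcsin u$, which is $C^\infty$ on $(-1,1)$ with $g(0)=0$ and $g'(u)=2\sqrt{1-u^2}$; in particular every $g^{(j)}$ is bounded on $[-c,c]$. Each summand of $H_{n+1}$ has the form $g(\varepsilon_l/\varepsilon)(z_l+z_{l+1})$. For $m=0$, smoothness of $g$ together with $g(0)=0$ gives $|g(\varepsilon_l/\varepsilon)| \leq K\varepsilon_l$. For $m\geq 1$, applying Faà di Bruno's formula to $\varepsilon\mapsto g(\varepsilon_l/\varepsilon)$ and using that every derivative of the inner function is $(-1)^j j!\,\varepsilon_l/\varepsilon^{j+1}$, one finds that every term in the expansion of the $m$-th derivative carries at least one factor $\varepsilon_l$, hence
$$
\left|\frac{d^m}{d\varepsilon^m}g\!\left(\tfrac{\varepsilon_l}{\varepsilon}\right)\right| \leq C_m\,\varepsilon_l
$$
uniformly on $[\varepsilon_{n+1}+\delta,\varepsilon_{n-1}]$, with $C_m$ independent of $l$. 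Multiplying by $|z_l+z_{l+1}|$ produces a summable majorant of order $1/l^3$, so by the Weierstrass M-test the termwise $m$-th derivative series converges absolutely and uniformly. Iterated application of the termwise differentiation theorem then delivers $H_{n+1} \in C^\infty$ with derivatives computed term by term.

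The only technical point is the extraction of a factor $\varepsilon_l$ from each Faà di Bruno summand, which is straightforward bookkeeping once one observes that every partition of $\{1,\dots,m\}$ forces at least one differentiation of $\varepsilon_l/\varepsilon$, each bringing down an $\varepsilon_l$. The uniform bound $u_l \leq c < 1$ is precisely what prevents the blow-up of $g^{(j)}$ near the endpoint $u=1$ from spoiling the estimates; this explains why the buffer $\delta > 0$ must appear in the hypothesis.
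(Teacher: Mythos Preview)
Your argument is correct, and in fact the paper states Proposition~\ref{auxi} explicitly \emph{without proof}, so there is nothing to compare against. Your approach---uniform control of $u_l=\varepsilon_l/\varepsilon$ away from $1$ via the buffer $\delta$, boundedness of all derivatives of $g(u)=u\sqrt{1-u^2}+\arcsin u$ on $[0,c]$, extraction of at least one factor $\varepsilon_l$ from each Fa\`a di Bruno summand, and then the Weierstrass $M$-test with majorant $O(l^{-3})$---is exactly the natural route one expects the authors had in mind.

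Two minor remarks. First, your bound $u_l\le \varepsilon_{n+1}/(\varepsilon_{n+1}+\delta)$ tacitly uses that $(\varepsilon_l)_{l\ge n+1}$ is non-increasing; this is not stated in the paper but follows from $d_l=|z_l|^2\,|1+O(z_l)|$ together with $|z_l|$ decreasing along orbits sufficiently close to the origin, so it deserves a one-line justification. Second, you work in the prenormalized case $k=1$; the same estimates go through for general $k$ with $\varepsilon_l=O(l^{-1-1/k})$ and $|z_l+z_{l+1}|=O(l^{-1/k})$, giving a summable majorant $O(l^{-1-2/k})$, so no generality is lost.
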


\noindent \emph{Proof of Proposition~\ref{accu}}.
We analyse the directed area of the tail and of the nucleus separately. Without any change in the class in $(0,\varepsilon_0)$, we can consider the directed area divided by $\varepsilon^2\pi$. We show that the points where class $C^2$ is lost are the points $\varepsilon_n$ in which, when $\varepsilon$ decreases, one disc detaches from the nucleus to the tail. We have
\begin{equation*}
\frac{A^\mathbb{C}(z,\varepsilon)}{\varepsilon^2\pi}=\frac{A^\mathbb{C}(T_\varepsilon)}{\varepsilon^2\pi}+\frac{A^\mathbb{C}(N_\varepsilon)}{\varepsilon^2\pi}.
\end{equation*}

The function $\varepsilon\mapsto \frac{A^\mathbb{C}(T_\varepsilon)}{\varepsilon^2\pi}$ is easy to analyse: it is a piecewise constant function on the intervals $[\varepsilon_{n+1},\varepsilon_n)$, with jumps at $\varepsilon=\varepsilon_n$ of value $+z_n$.

The directed area of the nucleus is computed adding the contribution of each crescent. By Proposition 5 in \cite{resman}, we have:
\begin{align}\label{nuclis}
\frac{A^\mathbb{C}(N_\varepsilon)}{\varepsilon^2\pi}=\left\{\begin{array}{l}z_{n+1}+G_{n+1}(\varepsilon),
\hfill\varepsilon\in[\varepsilon_{n+1},\varepsilon_{n}),\\[0.3cm] z_n+G_{n+1}(\varepsilon)+\\
\quad +\frac{1}{\pi}\left(\frac{\varepsilon_{n}}{\varepsilon}\sqrt{1-\frac{\varepsilon_{n}^2}{\varepsilon^2}}+\arcsin{\frac{\varepsilon_{n}}{\varepsilon}}\right)(z_{n}+z_{n+1})+\frac{z_{n+1}-z_{n}}{2},\\\hfill \varepsilon\in[\varepsilon_n,\varepsilon_{n-1}).\end{array}\right.
\end{align} 
Here, by $G_{n+1},\ n\in\mathbb{N}$, we denote the complex functions \begin{align*}G_{n+1}(\varepsilon)=&\frac{1}{\pi}\sum_{k=n+1}^{\infty}\left(\frac{\varepsilon_{k}}{\varepsilon}\sqrt{1-\frac{\varepsilon_{k}^2}{\varepsilon^2}}+\arcsin{\frac{\varepsilon_{k}}{\varepsilon}}\right)(z_{k}+z_{k+1})+\frac{z_{k+1}-z_{k}}{2}.
\end{align*}
$G_{n+1}(\varepsilon)$ presents the sum of contributions from the crescents corresponding to the points $z_{n+2},\ z_{n+3},$ etc.

Let $\delta>0$ such that $\varepsilon_{n+1}+\delta<\varepsilon_n$.  By Proposition~\ref{auxi} in the Appendix, function $G_ {n+1}$ is of class $C^2$ on each interval $(\varepsilon_{n+1}+\delta,\varepsilon_{n-1})$, $\delta>0$. Therefore, by \eqref{nuclis}, the point of nondifferentiability of $A^\mathbb{C}(N_\varepsilon)$ on $(\varepsilon_{n+1}+\delta,\varepsilon_{n-1})$ can only be $\varepsilon=\varepsilon_n$, where two parts defined by different formulas glue together. In the sequel, we show that at point $\varepsilon=\varepsilon_n$, $A^\mathbb{C}(N_\varepsilon)$ is of class $C^1$, but not $C^2$. Differentiating \eqref{nuclis} in $\varepsilon$ on some interval around $\varepsilon_n$, we get
\begin{align*}
\frac{d}{d\varepsilon}\frac{A^\mathbb{C}(N_\varepsilon)}{\varepsilon^2\pi}\Big|_{\varepsilon=\varepsilon_n-}=G_{n+1}'(\varepsilon_n-)\Big.,\
\frac{d}{d\varepsilon}\frac{A^\mathbb{C}(N_\varepsilon)}{\varepsilon^2\pi}\Big|_{\varepsilon=\varepsilon_n+}=G_{n+1}'(\varepsilon_n+)\Big.,
\end{align*}
the two being finite and equal since $G_{n+1}$ is of the class $C^2$ around $\varepsilon_n$. Therefore, $A^\mathbb{C}(N_\varepsilon)$ is of class $C^1$ at $\varepsilon=\varepsilon_n$,\ $n\in\mathbb{N}$. 

Differentiating once again, we get
\begin{align}\label{druga}
\frac{d^2}{d\varepsilon^2}&\frac{A^\mathbb{C}(N_\varepsilon)}{\varepsilon^2\pi}\Big|_{\varepsilon=\varepsilon_n-}={(G_{n+1})}^{\prime\prime}(\varepsilon_n-),\nonumber\\
\frac{d^2}{d\varepsilon^2}&\frac{A^\mathbb{C}(N_\varepsilon)}{\varepsilon^2\pi}\Big|_{\varepsilon=\varepsilon_n+}={(G_{n+1})}^{\prime\prime}(\varepsilon_n+)\ +\nonumber\\
& \qquad +\frac{1}{\pi}\bigg(\frac{4\varepsilon_n}{\varepsilon^3}\sqrt{1-\frac{\varepsilon_n^2}{\varepsilon^2}}-\frac{2\varepsilon_n^3}{\varepsilon^5}\frac{1}{\sqrt{1-\frac{\varepsilon_n^2}{\varepsilon^2}}}\bigg)\Bigg|_{\varepsilon=\varepsilon_n+}\cdot\big(z_{n+1}+z_n).
\end{align}
Although ${(G_{n+1})}^{\prime\prime}(\varepsilon_n-)={(G_{n+1})}^{\prime\prime}(\varepsilon_n+)\in\mathbb{C}$, the other term is unbounded when $\varepsilon\to\varepsilon_n+$. Therefore, the second derivative of $A^\mathbb{C}(N_\varepsilon)$ at $\varepsilon=\varepsilon_n$, $n\in\mathbb{N}$, does not exist.

Finally, glueing overlapping intervals $(\varepsilon_{n-1}+\delta,\varepsilon_{n+1}),\ n\in\mathbb{N},$ and adding the tail and the nucleus, we get the desired result.
$\hfill\Box$

\medskip
\noindent \emph{Proof of Proposition~\ref{no}}.
Let $\varepsilon>0$ be fixed. By $U_\varepsilon$ we denote the open set $U_\varepsilon=\{z\in V_+:\ |z-f(z)|<2\varepsilon\}$. For $z\in U_\varepsilon$, the $\varepsilon$-discs centered at points $z$ and $f(z)$ in $S^f(z)_\varepsilon$ overlap. Therefore, the $\varepsilon$-neighborhoods of orbits $S^f(z)$ and $S^f(f(z))$ differ by a crescent. By Proposition 5 in \cite{resman}, we get
\begin{align}\label{analy}
A^\mathbb{C}&(z,\varepsilon)=A^\mathbb{C}(f(z),\varepsilon)-\frac{\pi}{2}\varepsilon^2(f(z)-z)+\\
&\hspace{3cm}+\varepsilon^2(z+f(z))\cdot G\Big(\frac{|z-f(z)|}{2\varepsilon}\Big),\ z\in U_\varepsilon.\nonumber
\end{align}
Here, $G(t)=t\sqrt{1-t^2}+\arcsin t$, $t\in(0,1)$. We define function $T$:
\begin{equation}\label{te}
T(z)=A^\mathbb{C}(z,\varepsilon)-A^\mathbb{C}(f(z),\varepsilon),\ z\in V_+.
\end{equation}
By \eqref{analy}, it holds
$$
T(z)=-\frac{\pi}{2}\varepsilon^2(f(z)-z)+\varepsilon^2(z+f(z))\cdot G\Big(\frac{|z-f(z)|}{2\varepsilon}\Big),\ z\in U_\varepsilon.
$$
It holds that there exists some punctured neighborhood of $0$ such that $f'(z)\neq 1$, for all $z$ in that neighborhood. Otherwise, by analyticity of $f$ at $z=0$, it would hold that $f'\equiv 1$ on some neighborhood of 0. By inverse function theorem applied locally to $G$ and $\text{Id}-f$, and since absolute value is nowhere analytic, we see that $T$ is nowhere analytic on $U_\varepsilon$. 

We now take any small sector $S^+(\varphi,r)\subset V_+,\ r>0,\ \varphi\in(0,\pi)$. Suppose that $z\mapsto A^\mathbb{C}(z,\varepsilon)$ is analytic on $S_+$. Since $f$ is analytic, and $f(z)\in S_+$ for $z\in S_+$, the function $z\mapsto T(z)$ defined by \eqref{te} is also analytic on $S_+$. 
The intersection $S_+\cap U_\varepsilon$ is nonempty and therefore we derive a contradiction.
$\hfill\Box$

\medskip
\textbf{Acknowledgments}\ 
\emph{I would like to thank my supervisor, Pavao Marde\v si\' c, for proposing the subject and for numerous discussions and advices. Many thanks to David Sauzin for useful discussion. }

\end{document}